\documentclass{amsart}

\usepackage{a4wide,amsmath,amssymb}
\usepackage[toc,page]{appendix}

\usepackage{url}
%
%

\usepackage{xcolor}

\newtheorem{thm}{Theorem}[section]
\newtheorem{lemma}[thm]{Lemma}
\newtheorem{prop}[thm]{Proposition}
\newtheorem{cor}[thm]{Corollary}
\theoremstyle{remark}
\newtheorem{rem}[thm]{Remark}

\newtheorem{ex}[thm]{Example}

\theoremstyle{definition}
\newtheorem{defn}[thm]{Definition}
\newtheoremstyle{Claim}{}{}{\itshape}{}{\itshape\bfseries}{:}{ }{#1}
\theoremstyle{Claim}

\newcommand{\R}{\mathbb{R}}

\newcommand{\N}{\mathbb{N}}

\newcommand{\He}{\mathbb{H}}
\newcommand{\X}{\mathcal{X}}

\newcommand{\eps}{\varepsilon}

\newcommand{\al}{\alpha}

\newcommand{\g}{\gamma}
%


\newcommand{\G}{\mathbb{G}}

\DeclareMathOperator{\LSC}{LSC}
\DeclareMathOperator{\USC}{USC}
\newcommand{\Sym}{\mathcal{S}}
\theoremstyle{plain}

\def\sideremark#1{\ifvmode\leavevmode\fi\vadjust{
\vbox to0pt{\hbox to 0pt{\hskip\hsize\hskip1em
\vbox{\hsize3cm\tiny\raggedright\pretolerance10000
\noindent #1\hfill}\hss}\vbox to8pt{\vfil}\vss}}}

\begin{document}

\title[Liouville theorems]{Liouville results for fully nonlinear equations modeled on H\"ormander vector fields: II. Carnot groups and Grushin geometries}
\author{Martino Bardi}
\author{Alessandro Goffi} 

\date{\today}
\subjclass[2010]{Primary: 35B53, 35J70, 35J60; Secondary: 49L25, 35H20, 35R03.}
\keywords{Fully nonlinear equation, degenerate elliptic equation, subelliptic equation, H\"ormander condition, Liouville theorems, Carnot groups, Grushin plane, Khas'minskii condition, {ergodicity, degenerate diffusion}}
 \thanks{
 The authors are members of the Gruppo Nazionale per l'Analisi Matematica, la Probabilit\`a e le loro Applicazioni (GNAMPA) of the Istituto Nazionale di Alta Matematica (INdAM). 
 { This research was carried out while Alessandro Goffi was Ph.D. fellow at Gran Sasso Science Institute, L'Aquila, and Postdoctoral researcher at the University of Padova, and some of the results are part of his Ph.D. thesis. 
 {We wish to thank Prof. L. D'Ambrosio for pointing out reference \cite{DM19}.}}
 }
\address{Department of Mathematics ``T. Levi-Civita", University of Padova, Via Trieste 63, 35121 Padova, Italy} \email{bardi@math.unipd.it}
\address{Department of Mathematics ``T. Levi-Civita", University of Padova, Via Trieste 63, 35121 Padova, Italy} \email{alessandro.goffi@unipd.it}

\maketitle
\begin{abstract}
The paper treats second order fully nonlinear degenerate elliptic equations having a family of subunit vector fields satisfying a full-rank bracket condition. It studies Liouville properties for viscosity sub- and supersolutions in the whole space, namely, that under a suitable bound at infinity from above and, respectively, from below, they must be constants. In a previous paper we proved an abstract result and discussed operators on the Heisenberg group. Here we consider various families of vector fields: the generators of a Carnot group, with more precise results for those of step 2, in particular H-type groups and free Carnot groups, the Grushin and the { Heisenberg-Greiner} vector fields. All these cases are relevant in sub-Riemannian geometry and have in common the existence of a homogeneous norm that we use for building Lyapunov-like functions for each operator. We give explicit sufficient {conditions} on the size and sign of the first and { zero}-th order terms in the equations and discuss their optimality. 
{We also 
outline some applications of such results to the problem of 
 ergodicity of multidimensional degenerate diffusion processes in the whole space.}
\end{abstract}

\section{Introduction}
\label{intro}
This paper continues our analysis initiated in \cite{BG_lio1} on one-side Liouville properties for entire viscosity sub- and supersolutions of fully nonlinear subelliptic PDEs of the form
 \begin{equation}\label{fullyintro}
G(x,u,D_\X u,(D^2_\X u)^*)=0\text{ in }\R^d\ ,
\end{equation}
where $\X=\{X_1,...,X_m\}$ is a family of H\"ormander vector fields, { $u:\R^d\to\R$}, $D_\X u:\R^d\to\R^m$ and $(D^2_\X u)^*:\R^d\to\mathrm{Sym}_m$, $m\leq d$, are respectively the horizontal gradient and the symmetrized horizontal Hessian of the unknown function $u$.   
Our abstract result in  \cite{BG_lio1} considers  operators $G$ satisfying some general structural assumptions that we recall precisely in Section \ref{abs}. 
 We suppose the existence of an {\em exhaustion function} $w$ { \cite{Gry1,MPessoa1,MPessoa2}}, i.e., such that $\lim_{|x|\to\infty}w(x)=+\infty$, that we call 
 a {\em Lyapunov function} if it is a viscosity supersolution of \eqref{fullyintro} outside a compact set, 
  and impose to $u$ the bound from above for large $|x|$
\begin{equation}\label{above}
\limsup_{|x|\to\infty}\frac{u(x)}{w(x)}\leq 0\ .
\end{equation}
We call Liouville property for subsolutions of \eqref{fullyintro} the following:
\begin{multline}
\label{LP1}
\tag{LP1}
\text{ if $u\in\USC(\R^d)$ is a viscosity subsolution to \eqref{fullyintro}}\\
\text{satisfying \eqref{above}
for a Lyapunov function $w$, then $u$ is constant}. 
\end{multline}
Symmetrically, one can formulate a Liouville property for $\LSC(\R^d)$ supersolutions $v$ to \eqref{fullyintro} by assuming the existence of a function $W$ viscosity subsolution 
 to \eqref{fullyintro} outside a compact set, such that $\lim_{|x|\to\infty}W(x)=-\infty$, that we call
 a {
  \em negative Lyapunov function}, quantifying the bound from below at infinity via
\begin{equation}\label{below}
{ \limsup_{|x|\to\infty}}\frac{v(x)}{W(x)}\leq 0 \, .
\end{equation}
We call now Liouville property for supersolutions of \eqref{fullyintro} the following:
\begin{multline}
\label{LP2}
\tag{LP2}
\text{ if $v\in\LSC(\R^d)$ is a viscosity supersolution to \eqref{fullyintro}}\\
\text{satisfying \eqref{below} for a negative Lyapunov function $W$, then $v$ is constant}. 
\end{multline}
Therefore the validity of such properties boils down to the construction of suitable Lyapunov functions. For linear equations this is known in the literature as the {\em Khas'minskii test} and it is deeply connected with the recurrence and ergodicity properties of the associated diffusion process: see, e.g.,  the nice survey \cite{Gry1} on the extensions to Riemannian manifolds, \cite{MV} for quasilinear operators like the $p$-laplacian { and the recent papers \cite{MPessoa1,MPessoa2}{, the monograph \cite{BMPR}} along with the references therein}.

Such Lyapunov functions were first built  for fully nonlinear uniformly elliptic equations in \cite{BC} by exploiting the comparison with convex or concave operators, in particular the Pucci's extremal operators $\mathcal{M}_{\lambda,\Lambda}^-(D^2u), \mathcal{M}_{\lambda,\Lambda}^+(D^2u)$. In our previous paper  \cite{BG_lio1}  we built Lyapunov functions for degenerate equations with a similar structure on the Heisenberg group $\He^d$ by means of the norm $\rho$ homogeneous with respect to the dilations of the group, and we checked that the conditions for the Liouville properties were sharp by computing $\mathcal{M}_{\lambda,\Lambda}^\pm((D^2_{\He^d} f(\rho))^*)$ for suitable $f$. 

In the present paper we build Lyapunov functions, and therefore get Liouville properties,  for several other choices of 
 vector fields $\X$ that are of interest in sub-Riemannian geometry. We begin with homogeneous Carnot groups, for which we refer to the comprehensive monograph  \cite{BLU}. In this generality we discuss in particular a Liouville comparison principle inspired by \cite{KurtaKawohl}, the failure of Liouville properties for linear equations, { together with} an estimate of the distance at infinity of a supersolution from {the constant of the Liouville property}, inspired by \cite{Kurta, Veron}
 {, see also \cite{DM19}}.

Next we turn to two classes of groups of step 2: H-type groups and free Carnot groups. 
Here we use  Lyapunov functions of the form $w=\log\rho$, where $\rho$ is the homogeneous norm of the group, for subsolutions of convex equations or supersolutions of concave ones. We also discuss  the optimality of our conditions for getting (LP1) and (LP2) and consider equations involving the horizontal Hessian $D^2_\X u$ and the Euclidean gradient $Du$.

An example of result we get for {a model  H-type} group on $\R^7$ {introduced }in \cite[p.687]{BLU} is the following: 
 \eqref{LP1} holds when
\begin{equation}
\label{Gbelow}
G(x,u,D_\X u,(D^2_\X u)^*)\geq \mathcal{M}_{\lambda,\Lambda}^-((D^2_\X u)^*)+\inf_{\alpha\in A}\{c^\alpha(x)u-b^\alpha(x)\cdot D_\X u\} \, ,
\end{equation}
with $c^\alpha\geq0$, $c^\alpha, b^\alpha$ uniformly locally Lipschitz, and 
\begin{equation}
\label{recur}
\sup_{\alpha}\left\{b^\alpha(x)\cdot D_\X\rho(x)\frac{\rho^3}{|x_H|^2}-c^\alpha(x) \frac{\rho^4\log\rho}{|x_H|^2} \right\}\leq \lambda -\Lambda(Q-1)\ \text{for }|x|\geq R\ ,
\end{equation}
where $x_H$ is the horizontal part of $x$ and $Q$ is the homogeneous dimension of the structure 
\cite{BLU}. Since $D_\X\rho$ has explicit polynomial  components and $|D_\X\rho|=|x_H|/\rho$, one can easily check the last condition.
For instance, \eqref{recur} holds for $c^\alpha\geq c_o>0$ and $b^\alpha$ bounded, whereas for $c^\alpha=0$ it becomes a {\em recurrence condition} on the drift, saying that $b^\alpha\cdot D_\X\rho$ must be negative and large enough in norm for large $|x|$. Under the assumptions  \eqref{Gbelow} and  \eqref{recur} the Liouville comparison principle mentioned before states that a subsolution $u$ and a supersolution $v$ of \eqref{fullyintro}, such that $\limsup_{|x|\to\infty}\frac{u -v}{\log\rho}\leq 0$,  coincide up to a constant. Note that this is equivalent to the Liouville property for linear equations, 
but not in the nonlinear case, and it appears to be new  for fully nonlinear  equations even in the uniformly elliptic  Euclidean setting.

In the second part of the paper we consider geometries that are not related to a group structure. The first is the classical one associated to the Grushin vector fields, in the plane as well as in the generalized version for arbitrary dimension.  
Also here there is  a suitable norm $\rho$ associated to the fundamental solution of the sub-Laplacian \cite{DaLucente} and we consider operators that can be compared with convex or concave ones, as in \eqref {Gbelow}. The Liouville-type results that we find assume conditions on the data of the same form as \eqref{recur}, they are sharp in any dimension for quasi-linear equations  and in the plane for fully nonlinear operators. 

The last sub-Riemannian structure taken into account is generated by the Heisenberg-Greiner vector fields, see \cite{BGG1, BGG2, BieskeF}, that is intermediate between Heisenberg and Grushin geometries. Again, a gauge norm $\rho$ allows us to find a Lyapunov function.

We refer to our companion paper \cite{BG_lio1} for a 
general  introduction to Liouville properties for (degenerate) elliptic equations and their motivations and applications. The recent article \cite{CirantGoffi} discusses these properties for equations with superlinear growth in the gradient $Du$ and presents several open problems and a very large bibliography. 

{Let us emphasize that general Liouville-type results for fully nonlinear subelliptic equations cannot be deduced from 
} (invariant) Harnack inequalities as in the classical uniformly elliptic setting \cite{CC}, because  a building block as the ABP maximum principle is still unknown in this framework. Some recent advances in this direction can be found in \cite{Tralli} and the references therein. 

{The main potential applications of our 
Liouville 
 properties concern 
various forms  of ergodicity of 
 controlled 
  diffusion processes such as 
\[
dX_t=b(X_t, \al_t)dt+\sigma(X_t, \al_t) dB_t\ ,X_0=x\in\R^d,
\]
where $\al_t$ is a control function, $B_t$ a Brownian motion, and $\sigma$ a matrix whose entries are the coefficients of H\"ormander vector fields.
This is related to the large-time behavior for 
 degenerate parabolic Hamilton-Jacobi-Bellman equations. 
 Sufficient conditions for the ergodicity of such 
  processes on compact state spaces have been thoroughly discussed in the literature, cfr. \cite{AB} and the references therein, whilst no general criteria have been systematically explored 
  when the process is posed on the whole space. For uncontrolled processes 
   the analysis by PDE methods started in the work \cite{LionsMusiela} 
 and 
  continued in the Lions' lectures \cite{LionsCourse}, which inspired the recent work \cite{MMT}. At the end of the paper we show how the Lyapunov functions constructed in the previous sections can be used to prove asymptotic properties for linear degenerate elliptic and parabolic equations.}
  

The paper is organized as follows. In Section \ref{abs} we recall some preliminaries and the main abstract result from \cite{BG_lio1}, and we apply it to an abstract Liouville comparison principle.
Section \ref{sec_gen} first recalls some basic facts on Carnot groups and their sub-Laplacians, then gives a Liouville theorem for operators concave in the Hessian and convex in the gradient (or viceversa). It continues with examples of non-constant sub- and supersolutions of linear equations and with a theorem on the asymptotic behaviour at infinity of non-constant semi-solutions, which appears to be new even in the Heisenberg group. Section \ref{sec_fully} contains our results about equations on Carnot groups of step 2. We analyse first nonlinear PDEs on H-type groups, including cases with dependence on the full Euclidean gradient. Then we turn to free Carnot groups, for which we find more precise results for the classical Pucci operators $\mathcal{P}^{\pm}_{\lambda}$ \cite{Pucci66} than for the usual $\mathcal{M}^\pm_{\lambda,\Lambda}$ as defined in {\cite{CC}}.
In Section \ref{sec_gru} we introduce the Grushin vector fields for which we prove Liouville properties for quasi-linear and fully nonlinear equations. 
Section \ref{sec_hg} presents the  Heisenberg-Greiner geometry and a Liouville theorem for  quasi-linear operators.
In Section \ref{sec_fin} we show that for equations of Ornstein-Uhlenbeck type without  terms of order 0 our recurrence condition on the drift is sharp for the Liouville property. 
{Section \ref{sec;erg} concludes the paper with some applications to asymptotic problems of ergodic type 
for linear degenerate elliptic and parabolic equations.}

\section{Abstract results}\label{abs}
\subsection{Liouville properties for H\"ormander vector fields}
Given a family $\X=\{X_1,...,X_m\}$ of smooth vector fields satisfying the H\"ormander's rank condition { (see below for the definition)}, we consider general fully nonlinear subelliptic equations of the form
\begin{equation}\label{fully1}
G(x,u,D_\X u,(D^2_\X u)^*)=0\text{ in }\R^d\ ,
\end{equation}
where $D_\X u=(X_1u,...,X_mu)$ and $(D^2_\X u)^*_{ij}=\frac{X_iX_ju+X_jX_iu}{2}$ stand respectively for the horizontal gradient and { the symmetrized Hessian}, $m\leq d$, and $G:\R^d\times\R\times\R^m\times\mathrm{Sym}_m\to\R$. 
The equation can be written in Euclidean coordinates by calling $\sigma=\sigma(x)$ the $d\times m$ matrix whose columns $\sigma^j$ have the coefficients of the fields $X_j$, and observing that
\[
D_\X u=\sigma^TDu\ ,(D^2_\X u)^*=\sigma^T(x)D^2u\sigma(x)+g(x,Du)\ ,
\]
where $g(x,Du)$ is the $m\times m$ matrix with entries
\[
g_{ij}(x,p)=\left(\frac{D\sigma^j(x)\sigma^i(x)+D\sigma^i(x)\sigma^j(x)}{2}\right)\cdot p\ .
\]

{In analogy with \cite{CC},} we will consider those operators that are \textit{uniformly subelliptic}, i.e. for $M,N\in\mathrm{Sym}_m$
\begin{equation}\label{unifsubell}
\mathcal{M}^-_{\lambda,\Lambda}(M-N)\leq G(x,r,p,M)-G(x,r,p,N)\leq \mathcal{M}^+_{\lambda,\Lambda}(M-N)
\end{equation}
where $\mathcal{M}^\pm_{\lambda,\Lambda}$ are the Pucci's extremal operators \cite{CC} over the symmetrized horizontal Hessian defined as
\[
\mathcal{M}^+_{\lambda,\Lambda}(M)=\sup\{-\mathrm{Tr}(AM)\ ,\lambda I_m\leq A\leq \Lambda I_m\}=-\lambda\sum_{e_i(M)>0}e_i-{\Lambda}\sum_{e_i(M)<0}e_i
\]
\[
\mathcal{M}^-_{\lambda,\Lambda}(M)=\inf\{-\mathrm{Tr}(AM)\ ,\lambda I_m\leq A\leq \Lambda I_m\}=-\Lambda\sum_{e_i(M)>0}e_i-\lambda\sum_{e_i(M)<0}e_i\ ,
\]
{where $e_i(M)$ are the eigenvalues of the matrix $M$}.
It is important to point out that condition \eqref{unifsubell} does not imply the uniform ellipticity in the classical sense, as in  
Caffarelli-Cabr\'e \cite{CC}, since we are considering $m$-dimensional symmetric matrices for an operator posed on the Euclidean space $\R^d$ with $m<d$. 
We can also express the uniform subellipticity condition \eqref{unifsubell} via Euclidean coordinates as
\begin{multline*}
\lambda\mathrm{Tr}(\sigma^T(x)P\sigma(x)+g(x,p))\leq G(x,r,\sigma^T(x)p,\sigma^T(x)N\sigma(x)+g(x,p))\\
-G(x,r,\sigma^T(x)p,\sigma^T(x)(P+N)\sigma(x)+g(x,p))\leq \Lambda\mathrm{Tr}(\sigma^T(x)P\sigma(x)+g(x,p))
\end{multline*}
for $P,N\in \mathrm{Sym}_d$, $P\geq0$, i.e., positive semidefinite. { Note 
 that the correction term $g(x,p)$ vanishes in many important cases, such as Carnot groups of step 2}. 
\begin{rem}\label{nondiv}
It is worth remarking that 
\[
G((D^2_\X u)^*)=-\mathrm{Tr}((D^2_\X u)^*)=\mathcal{M}^\pm_{1,1}((D^2_\X u)^*)=-\Delta_\X u=-\sum_{i=1}^mX_i^2 u
\]
and by the definition of {the} Pucci's extremal operators (which appear as a supremum or infimum of linear operators in non-divergence form), all the non-existence results stated for equations of the form $\mathcal{M}^-_{\lambda,\Lambda}((D^2_\X u)^*)\leq H(x,u,D_\X u)$ or $\mathcal{M}^+_{\lambda,\Lambda}((D^2_\X u)^*)\geq H(x,u,D_\X u)$ lead immediately to non-existence results for sub- and supersolutions to 
{the non-divergence structure equation}
\[
-\mathrm{Tr}(A(x)(D^2_\X u)^*)= H(x,u,D_\X u),
\]
provided the eigenvalues of $A\in\mathrm{Sym}_m$ lie in the interval $[\lambda,\Lambda]$. To our knowledge, most of the results we will present are new even for linear and quasi-linear PDEs driven merely by the sum of squares of H\"ormander vector fields.
\end{rem}
We now recall some abstract Liouville results obtained in \cite{BG_lio1} for fully nonlinear uniformly subelliptic equations \eqref{fully1}. When dealing with Liouville properties for viscosity subsolutions, we will consider those operators $G$ uniformly subelliptic such that
\begin{equation}\label{G>H}
G(x,r,p,0)\geq H_i(x,r,p)\ ,x\in\R^d\ ,p\in\R^m\ , r\in\R ,
\end{equation}
 with $H_i$ concave in $p$, so of the form
\begin{equation}\label{Hi}
H_i(x,r,p)=\inf_{\alpha\in A}\{c^\alpha(x)r-b^\alpha(x)\cdot p\}\ .
\end{equation}
Then it will be enough to treat equations of the form
\begin{equation}\label{P-}
\mathcal{M}^-_{\lambda,\Lambda}((D^2_\X u)^*)+H_i(x,u,D_\X u)=0\text{ in }\R^d .
\end{equation}
When dealing with Liouville properties for viscosity supersolutions, we will assume instead
\begin{equation}\label{G<H}
G(x,r,p,0)\leq H_s(x,r,p)\ ,x\in\R^d\ ,p\in\R^m\ , r\in\R ,
\end{equation}
with $H_s$ convex in $p$, i.e.,
\begin{equation}\label{Hs}
H_s(x,r,p)=\sup_{\alpha\in A}\{c^\alpha(x)r-b^\alpha(x)\cdot p\}\ .
\end{equation}
and therefore consider the equation
\begin{equation}\label{P+}
\mathcal{M}^+_{\lambda,\Lambda}((D^2_\X u)^*)+H_s(x,u,D_\X u)=0\text{ in }\R^d\ .
\end{equation}
As for the first and zero-th order coefficients, we will assume the following conditions: $b^\alpha:\R^d\to\R^m$ is locally Lipschitz in $x$ uniformly in $\alpha$, i.e., for all $R>0$ there exists $K_R>0$ such that
\begin{equation}\label{b}
\sup_{|x|,|y|\leq R}|b^\alpha(x)-b^\alpha(y)|\leq K_R|x-y|
\end{equation}
and
\begin{equation}\label{c}
c^\alpha(x)\geq0\text{ and continuous in }|x|\leq R\text{ uniformly in }\alpha\ .
\end{equation}
We recall that a family of vector fields 
satisfy the H\"ormander's rank condition if 
\begin{equation}\label{H}\tag{H}
\text{the vector fields are smooth and the Lie algebra generated by them has full rank $d$ at each point}.
\end{equation}
\begin{thm}
\label{main}
Assume that the vector fields $\X$ are $C^{1,1}$ and satisfy the H\"ormander condition \eqref{H}. Furthermore, suppose that $G$ is uniformly subelliptic.
\begin{itemize}
\item[(a)] Under the previous assumptions on $H_i$, \eqref{LP1} holds for \eqref{P-} provided that either $u\geq0$ or $c^\alpha(x)\equiv 0$.
\item[(b)] Under the previous assumptions on $H_s$, \eqref{LP2} holds for \eqref{P+} provided that $v\leq0$ or $c^\alpha(x)\equiv 0$.
\end{itemize}
As a consequence, \eqref{LP1} and \eqref{LP2} hold for the equation \eqref{fully1}.
\end{thm}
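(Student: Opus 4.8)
The plan is to prove parts (a) and (b) for the extremal equations \eqref{P-} and \eqref{P+} and then obtain the statement for the general operator $G$ by reduction. For the reduction I would take $N=0$ in the uniform subellipticity \eqref{unifsubell}, which gives $G(x,r,p,M)\geq G(x,r,p,0)+\mathcal{M}^{-}_{\lambda,\Lambda}(M)\geq H_i(x,r,p)+\mathcal{M}^{-}_{\lambda,\Lambda}(M)$ by \eqref{G>H}; hence every viscosity subsolution of \eqref{fully1} is a subsolution of \eqref{P-}, and symmetrically, using $G\leq G(\cdot,\cdot,\cdot,0)+\mathcal{M}^{+}_{\lambda,\Lambda}\leq H_s+\mathcal{M}^{+}_{\lambda,\Lambda}$ and \eqref{G<H}, every supersolution of \eqref{fully1} is a supersolution of \eqref{P+}. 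Conversely a supersolution of \eqref{P-} is a supersolution of \eqref{fully1}, since $G\geq \mathcal{M}^{-}_{\lambda,\Lambda}+H_i\geq0$, so the Lyapunov functions produced for the extremal equation are admissible in \eqref{LP1} for $G$; this is what turns the result for the extremal operators into the ``consequence'' for $G$.

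For \eqref{LP1} on \eqref{P-} the engine is a Khas'minskii-type barrier argument. Let $w$ be the Lyapunov function, a supersolution of \eqref{P-} outside a compact set $K$ with $w\to+\infty$, and let $u$ be a subsolution satisfying \eqref{above}. The first observation I would record is that the Hamiltonian $H_i$ in \eqref{Hi}, being an infimum of maps linear in $(r,p)$, is positively $1$-homogeneous in $(r,p)$; combined with the $1$-homogeneity of $\mathcal{M}^{-}_{\lambda,\Lambda}$ this shows $\eps w$ is still a supersolution of \eqref{P-} for every $\eps>0$, and, since $c^\alpha\geq0$ by \eqref{c}, that adding a nonnegative constant preserves the supersolution property. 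Next, from \eqref{above} and $w\to+\infty$ one gets $u-\eps w\to-\infty$ as $|x|\to\infty$ for each fixed $\eps>0$; being upper semicontinuous, $u-\eps w$ attains its maximum at some $\bar x$. I would then compare $u$ against the barrier $\eps w$ on an annulus $B_R\setminus K$ and show, via the sub/supersolution inequalities at $\bar x$ and the comparison principle for proper uniformly subelliptic equations, that $\bar x\notin K$ is impossible. The resulting bound $u-\eps w\leq\max_{\bar K}(u-\eps w)\leq \max_{\bar K}u-\eps\min_{\bar K}w$ on all of $\R^d$, after letting $\eps\to0$, yields $\sup_{\R^d}u=\max_{\bar K}u$, so $u$ attains its global supremum at an interior point.

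The final step is the strong maximum principle: a subsolution of the degenerate subelliptic proper equation \eqref{P-} attaining an interior global maximum on the connected set $\R^d$ must be constant. This is exactly where the $C^{1,1}$ regularity of the fields and the full-rank H\"ormander condition \eqref{H} are indispensable, since the maximum propagates only along the horizontal and bracket directions and bracket-generation is what carries it to all of $\R^d$. Part (b) for \eqref{P+} is entirely symmetric, run with the negative Lyapunov function $W$, the bound \eqref{below}, the convex $H_s$ of \eqref{Hs}, $\min(v-\eps W)\to+\infty$, and the strong minimum principle. The dichotomy ``$u\geq0$ or $c^\alpha\equiv0$'' (resp. ``$v\leq0$ or $c^\alpha\equiv0$'') enters precisely in the barrier comparison: when $c^\alpha\equiv0$ the zeroth-order term drops out and constants may be added freely, whereas when it does not vanish the sign of $u$ is what keeps $c^\alpha u$ on the favorable side when matching the $H_i$-inequalities at the maximum point.

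I expect the main obstacle to be making this comparison rigorous at the viscosity level when $w$ is only a viscosity supersolution: one cannot simply use $\eps w$ as a test function, and since $G$ is nonlinear the difference $u-\eps w$ is not a subsolution, so a doubling-of-variables argument is required. The second delicate point is invoking a strong maximum principle in the exact form needed for extremal Pucci operators built on H\"ormander fields; both are technical but standard once the structural reductions above are in place.
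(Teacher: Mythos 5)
Your proposal follows essentially the same route as the paper: Theorem \ref{main} is not reproved here but cited from \cite[Corollaries 3.11 and 3.12]{BG_lio1}, whose argument is precisely your reduction to the extremal equations \eqref{P-}--\eqref{P+} via \eqref{unifsubell} and \eqref{G>H}--\eqref{G<H}, the Khas'minskii-type comparison of $u$ with $\eps w$ on annuli to force the supremum onto a compact set, and the strong maximum principle of \cite{BG} for operators with generalized subunit H\"ormander fields. The technical points you flag (handling $u-\eps w$ at the viscosity level, which for the concave operator in \eqref{P-} is indeed a subsolution via doubling of variables, and the precise form of the strong maximum principle) are exactly the ones settled in those references, so nothing essential is missing.
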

{This} result has been proved in \cite[Corollary 3.11 and 3.12]{BG_lio1} via the strong maximum principle recently obtained by the authors in \cite{BG} through the notion of generalized subunit vector fields for fully nonlinear operators. The result has extended prior Liouville properties obtained in \cite{BC} valid for fully nonlinear uniformly elliptic equations and some quasi-linear degenerate equations. 
Section 4.1 in \cite{BG_lio1} provides explicit sufficient conditions for the validity of the Liouville property 
for PDEs structured on
the Heisenberg group, 
but allows also a dependence on the Euclidean gradient, cf \cite[Section 4.3]{BG_lio1}. In the next sections, we will exhibit further sufficient conditions ensuring the validity of Liouville properties in more general structures. 

We finally remark that when the control set $A$ appearing in $H_i$ and $H_s$ is a singleton, and $\lambda=\Lambda=1$, our equations \eqref{P-}-\eqref{P+} reduce to the linear problem
\[
-\Delta_\X u+b(x)\cdot D_\X u+c(x)u=0
\]
and the results in Theorem \ref{main} apply to {it} 
 under the assumptions \eqref{b}-\eqref{c} on $b,c$ 
if there exist the Lyapunov functions $w$ and $W$. To our knowledge, the results we obtain here are new even for {such} 
 linear equations.

\subsection{A Liouville comparison principle.}
A consequence of Theorem \ref{main} is the following 
Liouville property 
expressed in the form of a comparison principle.
\begin{cor}      
\label{lcpl} 
Assume  $\X$ are $C^{1,1}$ and satisfy  \eqref{H}, $c^\alpha\equiv 0$, and there exists a Lyapunov function $w$ for \eqref{P-} (resp., a negative Lyapunov function $W$ for \eqref{P+}). Let $u$ and $v$ be  a viscosity sub- and supersolution to \eqref{P-} (respectively, to \eqref{P+})
such that $\limsup_{|x|\to\infty}\frac{u(x) - v(x)}{w(x)}\leq 0$ (resp.,  $\liminf_{|x|\to\infty}\frac{v(x)-u(x)}{W(x)}\leq 0$).
Then $u\equiv v$ in $\R^d$ up to a constant.
\end{cor}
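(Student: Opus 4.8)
The plan is to reduce the comparison statement to the Liouville property \eqref{LP1} already established in Theorem~\ref{main}(a). The natural idea is to form the difference $u-v$ and show it is a subsolution of the extremal equation \eqref{P-} with $c^\alpha\equiv 0$, so that the growth hypothesis $\limsup_{|x|\to\infty}\frac{u-v}{w}\leq 0$ is exactly the bound \eqref{above} needed to conclude that $u-v$ is constant. Concretely, I would first recall that $u$ is a subsolution of $\mathcal{M}^-_{\lambda,\Lambda}((D^2_\X u)^*)+H_i(x,D_\X u)=0$ and $v$ a supersolution of the same equation, where $H_i(x,p)=\inf_{\alpha}\{-b^\alpha(x)\cdot p\}$ since the zero-th order terms vanish.

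The key step is the \emph{subsolution property of the difference}. The point is that the Pucci extremal operator $\mathcal{M}^-_{\lambda,\Lambda}$ is subadditive in the sense that $\mathcal{M}^-_{\lambda,\Lambda}(M+N)\leq \mathcal{M}^-_{\lambda,\Lambda}(M)+\mathcal{M}^+_{\lambda,\Lambda}(N)$, and dually $\mathcal{M}^-(M)-\mathcal{M}^+(N)\leq \mathcal{M}^-(M-N)$; combined with the concavity of $p\mapsto H_i(x,p)$ one gets, at least formally, that $w:=u-v$ satisfies $\mathcal{M}^-_{\lambda,\Lambda}((D^2_\X w)^*)+H_i(x,D_\X w)\geq 0$ in the viscosity sense, i.e. $w$ is a subsolution of \eqref{P-}. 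Making this rigorous for viscosity sub/supersolutions whose sum or difference is taken is the standard but delicate part: one cannot simply add the defining inequalities pointwise because $u$ and $v$ need not be differentiable. The clean way is to invoke the \emph{theorem on sums} (Crandall--Ishii--Lions) to produce, at a common point of touching by smooth test functions, symmetric matrices $X,Y$ with $X-Y\leq 0$ realizing the sub- and super-jets of $u$ and $v$; feeding these into the two viscosity inequalities and using the monotonicity and subadditivity of $\mathcal{M}^\pm$ together with the structural hypotheses on $G$ yields the desired inequality for $w$. I expect this Jensen--Ishii / theorem-on-sums argument, and its adaptation to the horizontal Hessian $(D^2_\X\cdot)^*$ with the correction term $g(x,p)$, to be the main obstacle.

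Once $w=u-v$ is known to be a viscosity subsolution of \eqref{P-} with $c^\alpha\equiv 0$, I would apply part~(a) of Theorem~\ref{main}: since $c^\alpha\equiv0$ the sign restriction $w\geq 0$ is not needed, the existence of the Lyapunov function $w$ (confusingly named, so I would rename the difference, say $z:=u-v$) is assumed, and the hypothesis $\limsup_{|x|\to\infty}\frac{z(x)}{w(x)}\leq 0$ is precisely \eqref{above}. Hence \eqref{LP1} forces $z$ to be constant, which is the assertion $u\equiv v$ up to a constant. For the supersolution case one argues symmetrically: $v-u$ is a supersolution of \eqref{P+}, the convexity of $H_s$ and the dual subadditivity $\mathcal{M}^+(M)-\mathcal{M}^-(N)\geq\mathcal{M}^+(M-N)$ give the supersolution property, the bound $\liminf_{|x|\to\infty}\frac{v-u}{W}\leq 0$ matches \eqref{below} with the negative Lyapunov function $W$, and Theorem~\ref{main}(b) concludes. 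A final remark I would include is that in the genuinely nonlinear regime $\mathcal{M}^-\neq\mathcal{M}^+$, so the difference of a sub- and a supersolution need not solve the \emph{original} equation; it is only the extremal inequality that survives, which is exactly why the comparison principle is phrased through the Pucci operators and why it is strictly stronger information than (LP1) alone.
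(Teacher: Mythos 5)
Your proposal is correct and follows essentially the same route as the paper: one shows that $u-v$ is a viscosity subsolution of \eqref{P-} (resp.\ $v-u$ a supersolution of \eqref{P+}) and then applies Theorem \ref{main}, the paper delegating the subsolution-of-the-difference step to the arguments of \cite{BC, BG_lio1} exactly as you outline. One small slip worth fixing: the Pucci inequality you actually need is the superadditivity $\mathcal{M}^-_{\lambda,\Lambda}(M+N)\geq \mathcal{M}^-_{\lambda,\Lambda}(M)+\mathcal{M}^-_{\lambda,\Lambda}(N)$, equivalently $\mathcal{M}^-_{\lambda,\Lambda}(M-N)\leq \mathcal{M}^-_{\lambda,\Lambda}(M)-\mathcal{M}^-_{\lambda,\Lambda}(N)$ (an upper bound on $\mathcal{M}^-_{\lambda,\Lambda}$ of the difference), together with the analogous superadditivity of the concave $H_i$, rather than the lower bound $\mathcal{M}^-_{\lambda,\Lambda}(M)-\mathcal{M}^+_{\lambda,\Lambda}(N)\leq \mathcal{M}^-_{\lambda,\Lambda}(M-N)$ that you quote.
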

\begin{proof}
It is not hard to check by the arguments in \cite{BC, BG_lio1} that in the first case $u-v$ is a viscosity subsolution of \eqref{P-}. Since it satisfies \eqref{above}, by Theorem \ref{main} (a) it is a constant.

Similarly, in the second case $v-u$ is a supersolution of \eqref{P+} and it satisfies \eqref{below}, so Theorem \ref{main} (b) says that it is a constant.
\end{proof}
\begin{rem}
Note that the growth conditions at infinity on $u-v$ are both satisfied if $u\leq v+ C$ for some constant $C$, so the result is a counterpart for fully nonlinear equations of the Liouville comparison principles for quasilinear equations, see e.g. \cite{KurtaKawohl} (where $C=0$).

Note also that the constant $0$ is a solution of \eqref{P-} and \eqref{P+} because $c^\alpha\equiv 0$,  so the Corollary contains Theorem \ref{main}  as a special case.
\end{rem}
\begin{rem}
Sufficient conditions for the existence of the Lyapunov functions can be found in \cite{BC} for the Euclidean case, in \cite{BG_lio1} for the Heisenberg group, and 
 in the next sections for other sets of vector fields $\X$. 

Note that Corollary \ref{lcpl}  seems to be new even for linear equations
\[
-\mathrm{Tr}(A(x)(D^2_\X u)^*)+b(x)\cdot D_\X u = 0 
 \text{ in }\R^d
\]
as soon as the operator is degenerate elliptic (and $A$ is the square of a Lipschitz matrix, $b$ Lipschitz).
\end{rem}


\section{
General Carnot groups and sub-Laplacians}
\label{sec_gen}
\subsection{Definitions and preliminaries.}

\smallskip
{A stratified group (or Carnot group) 
 \cite[Definition 2.2.3]{BLU} is a connected and simply connected Lie group whose Lie algebra $\mathcal{G}$ admits a stratification $\mathcal{G}=V_1\oplus V_2\oplus...\oplus V_r$ with $[V_i,V_{i-1}]=V_i$ for $2\leq i\leq r$ and $[V_1,V_r]=0$, where $[V,W]:=\mathrm{span}\{[v,w]: v\in V, w\in W\}$. 
Here, $r$ is called the step of the group, and we set $m=d_1=\mathrm{dim}(V_1)$, $d_i=\mathrm{dim}(V_i)$, $2\leq i\leq r$. A stratified group can be identified with a so-called homogeneous Carnot group up to an isomorphism (see \cite[Section 2.2.3]{BLU}
)}. A  {\it homogeneous Carnot group} $\mathbb{G}$ { (cf. \cite[Definition 1.4.1]{BLU})} can be {in turn} identified with $\R^d=\R^{d_1}\times...\times\R^{d_r}$ ($d=d_1+...+d_r$) endowed with a group law $\circ$ 
 if for  any $\lambda>0$ the dilation $\delta_\lambda:\R^d\to\R^d$ of the form $\delta_\lambda(x)=(\lambda x^{(1)},...,{\lambda^{r}}x^{(r)})$ 
is an automorphism of the group, where $x=(x^{(1)},....,x^{(r)})$, { $x^{(i)}\in\R^{d_i}$}. 
Then the smooth vector fields on $\R^d$ $\X=\{X_1,...,X_m\}$ generate the homogeneous Carnot group $(\R^d,\circ,\delta_\lambda)$ 
if they are left-invariant on $\G$ and such that $X_j(0)=\partial_{x_j}|_0$ for $j=1,...,d_1$ span $\R^d$ at every point $x\in\R^d$. 
We say that $\G$ has step $r$ and $m=d_1$ generators.

In this case, the second order differential operator $\Delta_{\mathbb{G}}=\sum_{i=1}^mX_i^2$ sum of squares of vector fields is called sub-Laplacian on $\mathbb{G}$. We also denote with 
$$
Q:=\sum_{i=1}^rid_i{ =\sum_{i=1}^ri \mathrm{dim}(V_i)}
$$ 
the {\it homogeneous dimension} of the group, with $D_{\mathbb{G}}u=D_\X u:= (X_1u,...,X_mu)$ the horizontal gradient, and with $D^2_{\mathbb{G}}u=D^2_\X u$ the horizontal Hessian. 
We now collect some basic properties that will be needed in the sequel and that can be found in \cite[Chapter 5]{BLU}.
\begin{defn}
Let $\Delta_\G$ be a sub-Laplacian on a homogeneous Carnot group $\mathbb{G}$. We call $\Delta_\G$-gauge a 
symmetric norm $\rho$, { homogeneous with respect to $\delta_\lambda$}, smooth out of the origin,  and satisfying
\[
\Delta_\G(\rho^{2-Q})=0\text{ in }\mathbb{G}\backslash\{0\}\ .
\]
Then, a {\em radial function} on $\mathbb{G}$ is a function $u:\mathbb{G}\backslash\{0\}\to\R$ such that $u(x)=f(\rho(x))$ for all $x\in \mathbb{G}$ for a given $f:(0,\infty)\to \R$ and a $\Delta_\G$-gauge $\rho$ on $\mathbb{G}$.
\end{defn}
\begin{defn}
Let $\G$ be a homogeneous Carnot group on $\R^d$ and $\Delta_\G$ be a sub-Laplacian on $\G$. A function $\Gamma:\R^d\backslash\{0\}\to\R$ is a fundamental solution for $\Delta_\G$ if it satisfies
\begin{itemize}
\item $\Gamma\in C^\infty(\R^d\backslash\{0\})$ ,
\item $\Gamma\in L^1_{loc}(\R^d)$ and $\Gamma\to0$ as $|x|\to\infty$ ,
\item 
$\int_{\R^d}\Gamma\Delta_\G \varphi\,dx=-\varphi(0)$ for all $\varphi\in C_0^\infty(\R^d)$.
\end{itemize}
\end{defn}
We have the following important existence result due to G.B. Folland \cite[Theorem 2.1]{Folland} (see also \cite[Theorem 5.3.2]{BLU} and the references therein)
\begin{thm}
\label{Foll}
Let $\Delta_\G$ be a sub-Laplacian on a homogeneous Carnot group with homogeneous dimension $Q>2$. Then, there exists a fundamental solution $\Gamma$ of $\Delta_\G$. Moreover, the solution is unique.
\end{thm}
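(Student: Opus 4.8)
The plan is to exploit two structural features of $\Delta_\G$. First, since the generators $\X$ are the first-layer fields of a stratified algebra they satisfy the H\"ormander condition \eqref{H}, so by H\"ormander's theorem $\Delta_\G$ is hypoelliptic. Second, $\Delta_\G$ is left-invariant and homogeneous of degree $2$ with respect to the dilations, i.e. $\Delta_\G(f\circ\delta_\lambda)=\lambda^2(\Delta_\G f)\circ\delta_\lambda$. I would construct $\Gamma$ from the heat semigroup $e^{t\Delta_\G}$ generated by $\Delta_\G$, and then read off all the required properties from hypoellipticity plus homogeneity.

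For existence, let $h_t$ be the convolution kernel of $e^{t\Delta_\G}$. By hypoellipticity $h_t\in C^\infty$ for $t>0$, it is positive, and the degree-$2$ homogeneity of $\Delta_\G$ forces the scaling $h_t(x)=t^{-Q/2}h_1(\delta_{1/\sqrt t}x)$. First I would set $\Gamma(x):=\int_0^\infty h_t(x)\,dt$. For fixed $x\neq 0$ the integrand decays rapidly as $t\to0^+$ (off-diagonal Gaussian-type bound), while as $t\to\infty$ one has $\delta_{1/\sqrt t}x\to 0$ and hence $h_t(x)\sim t^{-Q/2}h_1(0)$; thus the integral converges \emph{precisely} because $Q>2$. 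The same scaling shows $\Gamma$ is homogeneous of degree $2-Q$: being homogeneous of negative degree it tends to $0$ as $|x|\to\infty$, and since $2-Q>-Q$ it is locally integrable, while hypoellipticity gives $\Gamma\in C^\infty(\R^d\setminus\{0\})$. Finally I would check the distributional identity by integrating the heat equation $\partial_t h_t=\Delta_\G h_t$ in $t$, formally
\[
\Delta_\G\Gamma=\int_0^\infty\Delta_\G h_t\,dt=\int_0^\infty\partial_t h_t\,dt=\lim_{t\to\infty}h_t-\lim_{t\to0^+}h_t=-\delta_0,
\]
which is exactly $\int_{\R^d}\Gamma\,\Delta_\G\varphi\,dx=-\varphi(0)$ for all $\varphi\in C_0^\infty(\R^d)$.

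For uniqueness, suppose $\Gamma_1,\Gamma_2$ both satisfy the three properties. Then $h:=\Gamma_1-\Gamma_2$ satisfies $\Delta_\G h=0$ on all of $\R^d$ in the distributional sense, the two delta masses having cancelled. By hypoellipticity $h$ is a smooth global solution of $\Delta_\G h=0$, and from the decay property it tends to $0$ as $|x|\to\infty$. A Liouville/maximum-principle argument for the sub-Laplacian---a globally $\Delta_\G$-harmonic function vanishing at infinity is identically $0$---then yields $\Gamma_1=\Gamma_2$.

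The hard part will be the rigorous construction and control of the heat kernel $h_t$: its smoothness and positivity, the scaling identity, and above all the off-diagonal bounds needed to justify the small-$t$ convergence of the defining integral and the passage to the limit in the distributional identity with the correct normalization. These rest on semigroup theory for the hypoelliptic operator $\Delta_\G$ combined with its dilation invariance. An alternative route, closer to Folland's original argument, bypasses the heat kernel and constructs a homogeneous distribution of degree $2-Q$ directly via a parametrix; in either approach the hypothesis $Q>2$ enters at the same decisive point, as the threshold that makes the degree-$(2-Q)$ candidate simultaneously decaying at infinity, locally integrable, and the convergent object produced by the construction.
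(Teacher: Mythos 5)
This is a classical result that the paper does not prove itself --- it simply cites \cite[Theorem 2.1]{Folland} and \cite[Theorem 5.3.2]{BLU} --- and your heat-kernel construction $\Gamma=\int_0^\infty h_t\,dt$ is precisely the proof given in the second of these references, while the parametrix/homogeneous-distribution route you mention as an alternative is Folland's original argument; the uniqueness argument via hypoellipticity, decay at infinity and the maximum principle is likewise the standard one. Your sketch is correct in outline and you have accurately flagged the genuinely technical points (Gaussian off-diagonal bounds for small $t$, the justification of differentiating under the integral in the distributional identity, and the Liouville/maximum-principle step), so there is nothing substantive to object to.
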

We now recall the following result connecting $\Delta_\G$-gauges and fundamental solutions of sub-Laplacians. 
\begin{prop}\label{fund}
Let $\Delta_\G$ be a sub-Laplacian on a homogeneous Carnot group $\mathbb{G}$ and $\Gamma$ be the fundamental solution of $\Delta_\G$. Then 
\begin{itemize}
\item[(i)] 
\[
\rho(x):=\begin{cases}
(\Gamma(x))^{\frac{1}{2-Q}}&\text{ if }x\in \mathbb{G}\backslash\{0\}\ ,\\
0&\text{ if }x=0\ ,
\end{cases}
\]
is a $\Delta_\G$-gauge on $\mathbb{G}$;
\item[(ii)] if $\rho$ is a $\Delta_\G$-gauge on $\mathbb{G}$, then there exists a positive constant $\gamma_d$ such that $\Gamma=\gamma_d \rho^{2-Q}$ is the fundamental solution of $\Delta_\G$;

\item[(iii)] if $\rho$ is a 
 norm on $\mathbb{G}$ { homogeneous with respect to $\delta_\lambda$}, smooth out of the origin,  and such that
\[
\Delta_\G(\rho^\alpha)=0\text{ in }\mathbb{G}\backslash\{0\}
\]
for a suitable $\alpha\in\R$, $\alpha\neq0$, then $\alpha=2-Q$ and $\rho$ is a $\Delta_\G$-gauge on $\mathbb{G}$.
\end{itemize}
\end{prop}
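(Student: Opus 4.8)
The plan is to read all three parts through the interaction of $\Delta_\G$ with the dilations $\delta_\lambda$ together with the uniqueness in Theorem~\ref{Foll}, exploiting throughout the divergence form $\Delta_\G u=\mathrm{div}(A\,Du)$ with $A=\sigma\sigma^T$, which is available because the left-invariant generators $X_i$ are divergence-free; this lets me apply the Euclidean divergence theorem on the sublevel sets $\{\rho>\varepsilon\}$ and on annuli $\{\varepsilon<\rho<r\}$. I will use repeatedly that $\delta_\lambda$ has Jacobian $\lambda^Q$ and that $\Delta_\G(u\circ\delta_\lambda)=\lambda^2(\Delta_\G u)\circ\delta_\lambda$, the latter because each $X_i$ is $\delta_\lambda$-homogeneous of degree one; in particular $X_i\rho$ is homogeneous of degree $0$, so $|D_\X\rho|^2$ is homogeneous of degree $0$.

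For (i), I would first establish that $\Gamma>0$ off the origin (by the strong minimum principle for $\Delta_\G$), that it is $\delta_\lambda$-homogeneous of degree $2-Q$, and that it is symmetric, $\Gamma(x^{-1})=\Gamma(x)$. Homogeneity follows by verifying, through a change of variables using the two scaling facts above, that $\Gamma_\lambda(x):=\lambda^{Q-2}\Gamma(\delta_\lambda x)$ satisfies $\int \Gamma_\lambda\,\Delta_\G\varphi\,dx=-\varphi(0)$ for every test $\varphi$, so that $\Gamma_\lambda$ is again a fundamental solution and uniqueness forces $\Gamma_\lambda=\Gamma$; symmetry follows in the same manner from the self-adjointness of $\Delta_\G$ (as $X_i^*=-X_i$) and uniqueness. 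Granting these, $\rho:=\Gamma^{1/(2-Q)}$ is smooth and strictly positive off the origin, vanishes only at $0$, is homogeneous of degree $1$ and symmetric, and satisfies $\Delta_\G(\rho^{2-Q})=\Delta_\G\Gamma=0$ in $\G\setminus\{0\}$; hence it is a $\Delta_\G$-gauge.

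For (ii), set $u:=\rho^{2-Q}$, which is smooth and positive off $0$, $\delta_\lambda$-homogeneous of degree $2-Q\in(-Q,0)$ — so $u\in L^1_{loc}$ and $u\to0$ at infinity — and solves $\Delta_\G u=0$ away from the origin. The crux is to identify the distribution $\Delta_\G u$: writing $\langle\Delta_\G u,\varphi\rangle=\lim_{\varepsilon\to0}\int_{\{\rho>\varepsilon\}}u\,\Delta_\G\varphi\,dx$ and applying Green's identity for $\mathrm{div}(A\,D\cdot)$ on $\{\rho>\varepsilon\}$, the interior term drops out by harmonicity and only a boundary integral over $\{\rho=\varepsilon\}$ remains. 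The term carrying $A\,D\varphi$ vanishes as $\varepsilon\to0$, while the term carrying $A\,Du$ converges to $\Psi\,\varphi(0)$, where $\Psi:=\int_{\{\rho=\varepsilon\}}A\,Du\cdot\nu\,d\mathcal{H}^{d-1}$ (with $\nu=D\rho/|D\rho|$ the normal pointing away from the origin) is independent of $\varepsilon$ by the divergence theorem. Since on $\{\rho=\varepsilon\}$ one has $A\,Du\cdot\nu=(2-Q)\varepsilon^{1-Q}|D_\X\rho|^2/|D\rho|\le0$ and not identically $0$, one gets $\Psi<0$, so $\Delta_\G u=\Psi\,\delta_0$ with $-\Psi>0$. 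Setting $\gamma_d:=-1/\Psi$ makes $\gamma_d\rho^{2-Q}$ a fundamental solution, and uniqueness yields $\Gamma=\gamma_d\rho^{2-Q}$.

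For (iii), let $v:=\rho^\alpha$, which is $\Delta_\G$-harmonic off $0$ and homogeneous of degree $\alpha$. Because $\mathrm{div}(A\,Dv)=0$ on every annulus $\{\varepsilon<\rho<r\}$, the flux $\Phi(r):=\int_{\{\rho=r\}}A\,Dv\cdot\nu\,d\mathcal{H}^{d-1}$ is independent of $r$. On the other hand, the change of variables $x=\delta_r(y)$ with $y\in\{\rho=1\}$ transforms $A$, $Dv$, $\nu$ and the $(d-1)$-dimensional area element so that a common normal factor cancels, yielding $\Phi(r)=r^{\alpha+Q-2}\Phi(1)$ with $\Phi(1)=\alpha\int_{\{\rho=1\}}|D_\X\rho|^2/|D\rho|\,d\mathcal{H}^{d-1}$. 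Moreover $\Phi(1)\ne0$: otherwise $|D_\X\rho|\equiv0$ on $\{\rho=1\}$, hence everywhere by the degree-$0$ homogeneity of $|D_\X\rho|^2$, so $\rho$ would be constant along the horizontal fields and, by the Hörmander condition \eqref{H}, globally constant — impossible for a norm. Comparing the two formulas for $\Phi$ forces $\alpha+Q-2=0$, i.e. $\alpha=2-Q$, and then $\rho$ is a $\Delta_\G$-gauge by definition. I expect the main obstacle to lie in the boundary analysis of part (ii): justifying the two limits as $\varepsilon\to0$ uniformly up to the origin and securing the strict sign $-\Psi>0$; by contrast the homogeneity and symmetry of $\Gamma$ in (i) are routine consequences of uniqueness, and the scaling identity in (iii) is a direct, if careful, change of variables.
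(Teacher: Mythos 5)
Your argument is correct in outline, but it is worth noting that the paper does not prove Proposition \ref{fund} at all: it simply cites \cite[Proposition 5.4.2, Theorem 5.5.6, Corollary 9.9.8]{BLU}. What you have written is essentially a self-contained reconstruction of the standard Folland--Bonfiglioli--Lanconelli--Uguzzoni argument: homogeneity and symmetry of $\Gamma$ by uniqueness in Theorem \ref{Foll} for (i), the Green/flux computation identifying $\Delta_\G(\rho^{2-Q})$ with a negative multiple of $\delta_0$ for (ii), and the scaling of the flux $\Phi(r)$ versus its $r$-independence for (iii). This buys a proof readable without the monograph, at the cost of several technical points you should make explicit. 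First, the level sets $\{\rho=\varepsilon\}$ are smooth hypersurfaces because Euler's identity for $\delta_\lambda$-homogeneity gives $\sum_i\sigma_i x_i\partial_i\rho=\rho>0$, hence $D\rho\neq0$ off the origin; without this the divergence theorem on $\{\rho>\varepsilon\}$ is not available. Second, the two boundary limits in (ii) and the scaling law $\Phi(r)=r^{\alpha+Q-2}\Phi(1)$ in (iii) should not be run on the raw Euclidean surface measure, which does \emph{not} scale like $\varepsilon^{Q-1}$ under the anisotropic dilations; the correct object is the coarea measure $d\mathcal{H}^{d-1}/|D\rho|$, whose total mass on $\{\rho=\varepsilon\}$ is $Q\omega\varepsilon^{Q-1}$ with $\omega=|\{\rho<1\}|$. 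Your integrands do carry the factor $|D_\X\rho|^2/|D\rho|$, so the stated identities are right, but the phrase ``a common normal factor cancels'' is exactly where this needs to be justified. Third, the strict sign $\Psi<0$ and $\Phi(1)\neq0$ both rest on $|D_\X\rho|\not\equiv0$ on a level set, which you correctly derive from degree-$0$ homogeneity plus connectivity under \eqref{H}. Finally, in (iii) the conclusion that $\rho$ is a $\Delta_\G$-gauge requires the symmetry $\rho(x^{-1})=\rho(x)$, which is not immediate from your flux argument; it follows either because ``norm'' is taken to include symmetry, or by feeding $\alpha=2-Q$ back into your part (ii) to get $\rho=\gamma_d^{-1/(2-Q)}\Gamma^{1/(2-Q)}$ and invoking the symmetry of $\Gamma$ established in (i). With these points filled in, the proof is complete and matches the one in \cite{BLU}.
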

\begin{proof}
(i) is proved in \cite[Proposition 5.4.2]{BLU}, (ii) in \cite[Theorem 5.5.6]{BLU} , and  (iii) in \cite[Corollary 9.9.8]{BLU}
\end{proof}
The next result is proved in \cite[Proposition 5.4.3]{BLU} and allows to compute sub-Laplacians on Carnot groups over radial functions with respect to the homogeneous norm.
\begin{prop}\label{subrad}
Let $\Delta_\G$ be a sub-Laplacian on a homogeneous Carnot group $\mathbb{G}$ and $f=f(\rho(x))$ be a smooth radial function on $\mathbb{G}\backslash\{0\}$. Then
\[
\Delta_\G(f(\rho))=|D_\G \rho|^2\left(f''(\rho)+\frac{Q-1}{\rho}f'(\rho)\right) \,.
\]
\end{prop}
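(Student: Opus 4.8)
The plan is to reduce everything to a direct chain-rule computation with the generating fields $X_i$, and then to pin down the otherwise inaccessible quantity $\Delta_\G\rho$ by feeding the defining property of the gauge into the very same identity.

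First I would apply the chain rule to each field. Since $\rho$ is smooth away from the origin and $f$ is smooth, one has $X_i(f(\rho)) = f'(\rho)\,X_i\rho$, and differentiating once more,
\[
X_i^2(f(\rho)) = f''(\rho)(X_i\rho)^2 + f'(\rho)\,X_i^2\rho\ .
\]
Summing over $i=1,\dots,m$ and recalling that $\Delta_\G = \sum_i X_i^2$ and $|D_\G\rho|^2 = \sum_i (X_i\rho)^2$ yields the preliminary formula
\[
\Delta_\G(f(\rho)) = |D_\G\rho|^2\,f''(\rho) + \Delta_\G(\rho)\,f'(\rho)\ ,
\]
valid for every smooth $f$. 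Comparing this with the asserted identity, the whole statement reduces to proving that $\Delta_\G\rho = \frac{Q-1}{\rho}|D_\G\rho|^2$ on $\mathbb{G}\setminus\{0\}$.

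To obtain this last identity I would exploit the defining property of the $\Delta_\G$-gauge, namely $\Delta_\G(\rho^{2-Q})=0$. Applying the preliminary formula to the particular choice $f(t)=t^{2-Q}$ gives
\[
0 = \Delta_\G(\rho^{2-Q}) = |D_\G\rho|^2(2-Q)(1-Q)\rho^{-Q} + \Delta_\G(\rho)(2-Q)\rho^{1-Q}\ .
\]
Since $Q>2$, the factor $(2-Q)$ is nonzero and can be divided out; multiplying the resulting relation by $\rho^{Q-1}$ and solving for $\Delta_\G\rho$ produces exactly $\Delta_\G\rho = \frac{Q-1}{\rho}|D_\G\rho|^2$. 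Substituting this back into the preliminary formula delivers the claimed expression.

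The computation is essentially routine, and the regularity needed for the chain rule is guaranteed since $\rho$ is smooth off the origin by the definition of the gauge. The only genuine subtlety is that the general form of $\rho$ is not available, so $\Delta_\G\rho$ cannot be computed by brute force; the key idea is therefore the self-referential step of plugging $f(t)=t^{2-Q}$ into the chain-rule identity and reading off $\Delta_\G\rho$ from the gauge equation. Here the hypothesis $Q>2$ (or at least $Q\neq 2$) is precisely what makes the division by $2-Q$ legitimate.
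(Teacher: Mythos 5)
Your proof is correct: the chain-rule reduction to computing $\Delta_\G\rho$, followed by extracting $\Delta_\G\rho=\frac{Q-1}{\rho}|D_\G\rho|^2$ from the gauge equation $\Delta_\G(\rho^{2-Q})=0$ with $f(t)=t^{2-Q}$, is exactly the standard argument. The paper does not prove this statement itself but cites \cite[Proposition 5.4.3]{BLU}, whose proof proceeds along the same lines as yours.
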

\subsection{Liouville property for some concave-convex equations}
\label{sec_cc}

In this section we derive some Liouville-type results in general homogeneous Carnot groups 
for subsolutions of equations of the form
\begin{equation}
\label{genCarnot+}
F((D^2_\mathbb{G}u)^*)+H_i(x,u,D_\G u)=0 \quad\text{ in }\R^d\ ,
\end{equation}
where
\begin{equation}
\label{FgenCarnot+}
\exists \, \lambda>0 \, : \, F(M)\geq -\lambda\mathrm{Tr}(M) \quad\forall \,M\in \Sym_m \,.
\end{equation}
Note that the Pucci's maximal operator $F=\mathcal{M}^+_{\lambda,\Lambda}$ satisfies such condition, and this motivates the title of this section.
The property \eqref{FgenCarnot+} allows to work by comparing $F((D^2_\mathbb{G}u)^*)$ with the corresponding sub-Laplacian, even if 
a fundamental-type solution {of the second order operator in} \eqref{genCarnot+} is not known in general.
\begin{cor}
\label{quasi_Carnot}
{ 
Assume  \eqref{FgenCarnot+} and that the $\Delta_\G$-gauge $\rho$ 
 satisfies}
\begin{equation}\label{condgen}
\sup_{\alpha\in A}\left\{\rho b^\alpha(x)\cdot D_\G\rho-c^\alpha(x)\rho^2\log\rho\right\}\leq -\lambda(Q-2)|D_\G\rho|^2
\end{equation}
for $\rho$ sufficiently large. 
{Then \eqref{LP1} holds for \eqref{genCarnot+} with $w(x)=\log\rho(x)$.}
%
\end{cor}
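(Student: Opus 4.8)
The plan is to deduce the statement from the abstract Theorem \ref{main}(a), the only real work being (i) to reduce a subsolution of \eqref{genCarnot+} to a subsolution of a model equation of the form \eqref{P-}, and (ii) to verify that $w(x)=\log\rho(x)$ is a Lyapunov function. Since $\rho$ is a homogeneous norm, smooth off the origin and tending to $+\infty$ with $|x|$, the candidate $w=\log\rho$ is smooth outside a compact neighbourhood of the origin and satisfies $\lim_{|x|\to\infty}w(x)=+\infty$, so it is automatically an exhaustion function; the substance is the supersolution inequality and the passage to the Pucci framework.

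First I would record the reduction step. By \eqref{FgenCarnot+} we have $F(M)\geq -\lambda\mathrm{Tr}(M)=\mathcal{M}^-_{\lambda,\lambda}(M)$ for every $M\in\Sym_m$, the last equality being the definition of the Pucci operator when $\Lambda=\lambda$. Hence, at any point where a smooth test function $\phi$ touches $u$ from above, the subsolution inequality for \eqref{genCarnot+} gives $\mathcal{M}^-_{\lambda,\lambda}((D^2_\G\phi)^*)+H_i\leq F((D^2_\G\phi)^*)+H_i\leq 0$, so $u$ is a viscosity subsolution of the model equation \eqref{P-} with $\Lambda=\lambda$ (which is uniformly subelliptic, since $\mathcal{M}^-_{\lambda,\lambda}=-\lambda\mathrm{Tr}$ is linear). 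This is the equation to which Theorem \ref{main}(a) will be applied.

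The key computation is then to show that $w=\log\rho$ is a classical, hence viscosity, supersolution of \eqref{P-} outside a compact set. Taking $f=\log$ in Proposition \ref{subrad}, with $f'(\rho)=1/\rho$ and $f''(\rho)=-1/\rho^2$, yields $\Delta_\G(\log\rho)=|D_\G\rho|^2(Q-2)/\rho^2$, so that $\mathcal{M}^-_{\lambda,\lambda}((D^2_\G w)^*)=-\lambda\Delta_\G w=-\lambda(Q-2)|D_\G\rho|^2/\rho^2$. For the first-order term I use $D_\G w=\rho^{-1}D_\G\rho$ together with the concave structure \eqref{Hi}: dividing \eqref{condgen} by $\rho^2>0$ and invoking $\inf_\alpha\{-a_\alpha\}=-\sup_\alpha a_\alpha$, the hypothesis translates precisely into $H_i(x,\log\rho,\rho^{-1}D_\G\rho)\geq \lambda(Q-2)|D_\G\rho|^2/\rho^2$ for $\rho$ large. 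Adding the two contributions gives $\mathcal{M}^-_{\lambda,\lambda}((D^2_\G w)^*)+H_i(x,w,D_\G w)\geq 0$ for $|x|$ large, which is exactly the Lyapunov property (the region $\rho\geq R_0$ corresponds to $|x|$ outside a compact set because $\rho\to\infty$).

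With $u$ a subsolution of \eqref{P-} satisfying \eqref{above} for the Lyapunov function $w=\log\rho$, Theorem \ref{main}(a) (whose structural hypotheses on $H_i$, $b^\alpha$, $c^\alpha$ are in force, under either $u\geq 0$ or $c^\alpha\equiv 0$) forces $u$ to be constant, which is \eqref{LP1} for \eqref{genCarnot+}. I expect the only delicate point to be the sign bookkeeping in passing from the supremum in \eqref{condgen} to the lower bound for the infimum $H_i$: the condition is calibrated so that the drift and zero-order terms compensate the term $-\lambda(Q-2)|D_\G\rho|^2/\rho^2$ produced by the sub-Laplacian exactly at the borderline, and one must carry the factor $\rho^2$ (and its cancellation) through correctly. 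Everything else is a direct substitution into Proposition \ref{subrad} and an invocation of the abstract theorem.
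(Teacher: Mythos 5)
Your proposal is correct and follows essentially the same route as the paper: reduce via \eqref{FgenCarnot+} to the model equation with $-\lambda\mathrm{Tr}$ in the second-order part, compute $\Delta_\G(\log\rho)$ from Proposition \ref{subrad}, check that \eqref{condgen} makes $w=\log\rho$ a supersolution for $\rho$ large, and invoke Theorem \ref{main}(a). The sign bookkeeping and the cancellation of the factor $\rho^2$ are handled exactly as in the paper's argument.
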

\begin{proof}
By \eqref{FgenCarnot+} 
$
\mathcal{M}^+_{\lambda,\Lambda}((D^2_\mathbb{G}u)^*)\geq 
-\lambda\Delta_\G u
$
and so $u$ is a subsolution to the equation $-\Delta_\G u+H_i(x,u,D_\G u)=0$.
{ We {then }apply Theorem \ref{main} with the Lyapunov function 
 $w(x)
 =\log\rho(x)$. }
 Note that $\lim_{|x|\to\infty}w
 (x)=\infty$ because 
  $\rho=\Gamma^{1/(2-Q)} \to\infty$ as $|x|\to\infty$ by { Proposition \ref{fund}}. 
By Proposition \ref{subrad} 
 $w$ satisfies
 \[
-\lambda\Delta_\G(w(\rho))=-|D_\G \rho|^2\frac{(Q-2)\lambda}{\rho^2}\ .
\]
Thus, $w$ is a supersolution at all points where
\begin{equation*}
-|D_\G \rho|^2\frac{(Q-2)\lambda}{\rho^2}+\inf_{\alpha\in A}\left\{c^\alpha(x)\log\rho-b^\alpha(x)\cdot\frac{D_\G\rho}{\rho}\right\}\geq0\ .
\end{equation*}
In particular, this inequality holds when $\rho$ is sufficiently large under condition \eqref{condgen}. 
\end{proof}
\begin{rem}
\label{conv-conc}
A symmetric result holds for supersolutions of $F((D^2_\mathbb{G}u)^*)+H_s(x,u,D_\G u)=0$ if $F(M)\leq -\Lambda\mathrm{Tr}(M)$ for some $\Lambda>0$, as it is the case for the Pucci's minimal operator $F=\mathcal{M}^-_{\lambda,\Lambda}$.
\end{rem}

\begin{rem}
\label{conv-conc}
Under condition \eqref{condgen} the Liouville comparison principle Corollary \ref{lcpl} applies 
to  
 \eqref{genCarnot+} with $w=\log\rho$ and to  the equation of Remark \ref{conv-conc}  with $W=-\log\rho$.
\end{rem}

\subsection{Failure of the Liouville property for linear equations.}

Using the { existence  of a $\Delta_\G$-gauge of Proposition \ref{fund} 
 we can easily 
 show the failure of the Liouville property for sub- and supersolutions to sub-Laplace equations on any homogeneous Carnot group. 
 %
\begin{prop}
\label{nonex}
Let $\X$ 
 be a system of smooth vector fields in $\R^d$ generating a homogeneous Carnot group $\G$ with  homogeneous dimension $Q$, and $\rho$ be a $\Delta_\G$-gauge  on $\mathbb{G}$. Then
  \[
u_1(x)=(1+\rho^2(x))^{1-\frac{Q}{2}}\ ,
\]
is 
a non-constant bounded classical supersolution 
 to $-\Delta_{\G}u=0$ in $\mathbb{G}$, while $-u_1$ is a non-constant bounded subsolution to $-\Delta_{\G}u=0$ in $\mathbb{G}$.  
\end{prop}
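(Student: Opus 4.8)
The plan is to exploit that $u_1$ is a radial function and reduce the computation of $\Delta_\G u_1$ to an ordinary differential expression in $\rho$ by means of Proposition \ref{subrad}. Writing $u_1=f(\rho)$ with $f(t)=(1+t^2)^{1-Q/2}$, I observe that $f$ is smooth on $[0,\infty)$ and $\rho$ is smooth on $\mathbb{G}\setminus\{0\}$, so $u_1$ is smooth there and Proposition \ref{subrad} gives
\[
\Delta_\G u_1=|D_\G\rho|^2\Bigl(f''(\rho)+\tfrac{Q-1}{\rho}f'(\rho)\Bigr)\quad\text{in }\mathbb{G}\setminus\{0\}.
\]
First I would compute $f'(t)=(2-Q)\,t\,(1+t^2)^{-Q/2}$ together with the corresponding $f''$, and then collect the terms of $f''(t)+\frac{Q-1}{t}f'(t)$ over the common factor $(1+t^2)^{-(Q+2)/2}$.

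The heart of the argument is an algebraic cancellation. After factoring, the numerator of $f''(t)+\frac{Q-1}{t}f'(t)$ is an affine function of $t^2$ whose $t^2$-coefficient vanishes \emph{exactly} for the chosen exponent $1-Q/2$, leaving only the constant $Q(2-Q)$; this is the analogue of the identity $\Delta_\G(\rho^{2-Q})=0$, which furnishes a convenient sanity check. Consequently
\[
\Delta_\G u_1=Q(2-Q)\,|D_\G\rho|^2\,(1+\rho^2)^{-\frac{Q+2}{2}}\quad\text{in }\mathbb{G}\setminus\{0\}.
\]
Since $Q>2$ (as required for the existence of the gauge, by Theorem \ref{Foll}), the prefactor $Q(2-Q)$ is negative, so $\Delta_\G u_1\le 0$, i.e.\ $-\Delta_\G u_1\ge 0$, and $u_1$ is a classical supersolution on $\mathbb{G}\setminus\{0\}$. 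Boundedness and non-constancy are immediate: $1-Q/2<0$ and $1+\rho^2\ge 1$ yield $0<u_1\le 1$ with $u_1(0)=1$ and $u_1\to 0$ as $|x|\to\infty$, while $u_1$ depends non-trivially on $\rho$.

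The only delicate point I anticipate is the behaviour at the origin, where $\rho$, and hence $u_1$, need not be smooth (already in the Heisenberg case $\rho^2$ fails to be $C^1$ at $0$), so the classical computation is unavailable there. I would settle this by noting that $u_1$ attains its global maximum at $0$: if $\varphi$ is a smooth test function such that $u_1-\varphi$ has a local minimum at $0$, then $\varphi$ has a local maximum at $0$, whence $D^2\varphi(0)\le 0$ and therefore $-\Delta_\G\varphi(0)=-\mathrm{Tr}\bigl(\sigma^T(0)D^2\varphi(0)\sigma(0)\bigr)\ge 0$; thus the supersolution inequality holds at the origin as well. The statement for $-u_1$ then follows by symmetry, reversing the inequalities: $-u_1$ is a bounded non-constant subsolution of $-\Delta_\G u=0$, attaining its minimum at the origin.
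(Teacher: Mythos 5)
Your proposal is correct and follows essentially the same route as the paper: apply Proposition \ref{subrad} to the radial function $f(\rho)=(1+\rho^2)^{1-Q/2}$ and observe the cancellation that yields $\Delta_\G u_1=-Q(Q-2)|D_\G\rho|^2(1+\rho^2)^{-1-Q/2}\le 0$, with boundedness and non-constancy immediate from $Q>2$. Your extra paragraph handling the origin (where $\rho^2$, hence $u_1$, need not be $C^2$) via the test-function inequality at the global maximum is a legitimate refinement that the paper's proof silently skips, though it establishes the supersolution property there only in the viscosity sense rather than the classical one asserted in the statement.
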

\begin{proof}
Clearly $0\leq u_1 \leq 1$ because $Q> 2$, while owing to Proposition \ref{subrad} we find
\begin{multline*}
\Delta_{\G}u_1 
=
|D_\G\rho|^2\left[Q(Q-2)(1+\rho^2)^{-1-\frac{Q}{2}}\rho^2+(2-Q)(1+\rho^2)^{-\frac{Q}{2}}+(1+\rho^2)^{-\frac{Q}{2}}(2-Q)(Q-1)\right]\\
= - |D_\G\rho|^2Q(Q-2)(1+\rho^2)^{-1-\frac{Q}{2}}
\leq 0
\end{multline*}
\end{proof}
}
\begin{rem}
When $\mathbb{G}$ is the classical $d$-dimensional Euclidean group, {$d\geq 3$}, the sub-Laplace operator reduces to the classical Laplacian $\Delta:=\sum_{i=1}^d\partial_{x_i}^2$, the homogeneous dimension is $Q=d$ and $\rho(x)=|x|$. 
This case is classical, and counterexamples to the one-side Liouville property { can be found in {\cite{BG_lio1} and the references therein}.
 }
\end{rem}

\begin{rem}
{ One- or two-side} Liouville properties for solutions to sub-Laplace equations on Carnot groups have been investigated in \cite{ICDCsub} by means of mean-value formulas and in \cite{BLU} via Harnack inequalities. Proposition \ref{nonex} shows that, instead, 
 the property may fail for mere sub- or supersolutions. 
\end{rem}

We  consider now a family of smooth vector fields fulfilling the following assumptions
\begin{itemize}
\item[(H1)] $X_1,...,X_m$ are linearly independent and satisfy the H\"ormander rank condition at $0\in\R^d$.
\item[(H2)] $X_1,...,X_m$ are homogeneous of degree 1 with respect to a family of anisotropic dilations $\delta_\lambda$ on $\R^d$ of the form
\[
\delta_\lambda(x)=(\lambda^{\sigma_1}x_1,...,\lambda^{\sigma_d}x_d)
\]
for $\sigma_i\in\N$, $i=1,...,d$ and $1\leq \sigma_1\leq...\leq\sigma_d$.
\end{itemize}
We can extend Proposition \ref{nonex} 
to sub-Laplacians built on vector fields fulfilling the above assumptions (H1) and (H2) via a global lifting argument that dates back to 
 Folland and recently reconsidered in \cite{BB}. 
{ A similar argument has been recently used in \cite[Proposition 5.6]{BiagiLanconelli} to prove non-existence of nontrivial solutions for sub-Laplace equations, and the next result shows that, instead, non-constant sub- and supersolutions do exist.}
\begin{cor}
Let $\X=\{X_1,...,X_m\}$ be a system of smooth vector fields in $\R^d$ satisfying (H1)-(H2). Then, there exist non-constant 
{ classical subsolutions bounded above and supersolutions bounded below } to $-\sum_{j=1}^mX_j^2u=0$ in $\mathbb{R}^d$.
\end{cor}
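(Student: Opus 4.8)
The plan is to reduce the statement to Proposition \ref{nonex} by producing, for the sum of squares $\Delta_\X=\sum_{j=1}^m X_j^2$, a $\delta_\lambda$-homogeneous gauge on $\R^d$ playing the role that the $\Delta_\G$-gauge $\rho$ plays in the group case. The obstruction is that under (H1)--(H2) the fields $X_1,\dots,X_m$ need not be left-invariant for any group law on $\R^d$, so Proposition \ref{fund} and Theorem \ref{Foll} are not directly available. I would remove this obstruction with the global lifting of Folland, in the form reconsidered in \cite{BB}: there exist smooth vector fields $\tilde X_1,\dots,\tilde X_m$ on $\R^N=\R^d\times\R^{N-d}$, $N\geq d$, homogeneous of degree $1$ with respect to dilations extending $\delta_\lambda$, generating a homogeneous Carnot group $\G$ with homogeneous dimension $Q=\sum_{i=1}^d\sigma_i>2$, and such that each $\tilde X_j=X_j+\sum_l a_j^l(x,\xi)\partial_{\xi_l}$ projects onto $X_j$ under $\pi(x,\xi)=x$.

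To obtain the gauge downstairs I would descend the fundamental solution. By Theorem \ref{Foll} the sub-Laplacian $\Delta_\G=\sum_j\tilde X_j^2$ has a positive fundamental solution $\tilde\Gamma$, and the classical Folland fibre-integration argument shows that $\Gamma_\X(x):=\int_{\R^{N-d}}\tilde\Gamma(x,\xi)\,d\xi$ is a fundamental solution of $\Delta_\X$ on $\R^d$: by the stratified structure the coefficient of $\partial_{\xi_l}$ in $\tilde X_j$ does not depend on $\xi_l$, so every term of $\Delta_\G\tilde\Gamma$ carrying a $\partial_\xi$ is a total $\xi$-divergence and integrates to zero, leaving $\Delta_\X\Gamma_\X(x)=\int_{\R^{N-d}}\Delta_\G\tilde\Gamma(x,\xi)\,d\xi=0$ for $x\neq 0$, since $\Delta_\G\tilde\Gamma$ vanishes off the origin of $\R^N$. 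The resulting $\Gamma_\X$ is positive, $\delta_\lambda$-homogeneous of degree $2-Q$, and smooth off the origin by the hypoellipticity of $\Delta_\X$ (H\"ormander's condition, which by (H1) holds at $0$ and hence, by the $\delta_\lambda$-homogeneity (H2), everywhere).

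I would then set $\rho_\X:=\Gamma_\X^{1/(2-Q)}$, a positive $\delta_\lambda$-homogeneous function of degree $1$, smooth off the origin and vanishing only at $0$, which satisfies $\Delta_\X(\rho_\X^{2-Q})=0$ in $\R^d\setminus\{0\}$ by construction. The key observation is that the computation behind Proposition \ref{subrad} uses nothing but the chain rule $X_j^2(f(\rho))=f''(\rho)(X_j\rho)^2+f'(\rho)X_j^2\rho$ together with the identity $\Delta_\X\rho_\X=\frac{Q-1}{\rho_\X}|D_\X\rho_\X|^2$, the latter being a direct consequence of $\Delta_\X(\rho_\X^{2-Q})=0$; neither step invokes the group law. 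Hence the radial formula $\Delta_\X(f(\rho_\X))=|D_\X\rho_\X|^2\big(f''(\rho_\X)+\tfrac{Q-1}{\rho_\X}f'(\rho_\X)\big)$ holds verbatim with this $\rho_\X$ and this $Q$, and the conclusion becomes a repetition of the proof of Proposition \ref{nonex}: taking $u_1=(1+\rho_\X^2)^{1-\frac{Q}{2}}$, the same algebra gives $\Delta_\X u_1=-|D_\X\rho_\X|^2 Q(Q-2)(1+\rho_\X^2)^{-1-\frac{Q}{2}}\leq 0$ while $0<u_1\leq 1$ because $Q>2$, so $u_1$ is a non-constant classical supersolution of $-\Delta_\X u=0$ bounded below and $-u_1$ a non-constant classical subsolution bounded above (the origin being a single point where $u_1$ is continuous and attains an interior extremum, so the inequalities persist there).

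I expect the only genuinely delicate point to be the descent of the second paragraph: the convergence of the fibre integral and the $\delta_\lambda$-homogeneity, positivity and off-origin smoothness of $\Gamma_\X$. This is exactly the content supplied by the lifting theory of \cite{Folland, BB}, and once $\rho_\X$ is available everything else is the computation already carried out for Proposition \ref{nonex}.
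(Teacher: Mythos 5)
Your proposal is correct in substance but takes a genuinely different route from the paper, and the difference is worth spelling out. The paper's proof goes in the opposite direction: it lifts the fields via \cite[Theorem 3.2]{BB} to generators $Z_1,\dots,Z_m$ of a homogeneous Carnot group $\G\simeq\R^N$, observes that $\Delta_\G(u\circ\pi)=(\Delta_\X u)\circ\pi$, and then invokes Proposition \ref{nonex} \emph{upstairs} on $\R^N$, leaving the descent back to $\R^d$ essentially implicit (the counterexample of Proposition \ref{nonex} on $\G$ is not of the form $u\circ\pi$, so the stated lifting identity alone does not hand you a function on $\R^d$). You instead descend the fundamental solution by fibre integration, $\Gamma_\X(x)=\int_{\R^{N-d}}\tilde\Gamma(x,\xi)\,d\xi$, extract a $\delta_\lambda$-homogeneous gauge $\rho_\X=\Gamma_\X^{1/(2-Q)}$ on $\R^d$, observe---correctly---that Proposition \ref{subrad} uses only the chain rule together with the identity $\Delta_\X\rho_\X=\frac{Q-1}{\rho_\X}|D_\X\rho_\X|^2$ forced by $\Delta_\X(\rho_\X^{2-Q})=0$ and not the group law, and then rerun the computation of Proposition \ref{nonex} downstairs. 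This buys an explicit counterexample living on $\R^d$ itself, which is literally what the statement asserts; the delicate analytic points you flag (convergence of the fibre integral, positivity, homogeneity and off-origin smoothness of $\Gamma_\X$) are exactly the main theorem of \cite{BB}, so deferring to it is legitimate and you could even replace your second paragraph by a direct citation. Two small corrections: the homogeneous dimension of the lifted group on $\R^N$ is the sum of all $N$ weights, not $Q=\sum_{i=1}^d\sigma_i$; the latter is the homogeneous dimension of the base structure and is the exponent you correctly use for $\Gamma_\X$ (one does need $Q>2$, a tacit restriction shared with the paper's statement). Also, the classical validity of the inequality at the single point $x=0$, where $\rho_\X^2$ need not be twice differentiable, is glossed over---but that wrinkle is inherited verbatim from Proposition \ref{nonex} and is not specific to your argument.
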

\begin{proof}
The 
 lifting argument in 
\cite[Theorem 3.2]{BB} 
shows that there exists a homogeneous Carnot group { $\mathbb{G}\simeq \R^N$}, $N>d$, and a system of Lie generators { $Z_1,...,Z_m$ of $\G$}, which are the lifted vector fields corresponding to $X_1,...,X_m$. { Then, we set $\Delta_\G=\sum_{j=1}^mZ_j^2$ and observe that $\Delta_\G (u\circ\pi)=(\Delta_\X u)\circ \pi$, where $\pi:\R^N\to\R^d$ is the canonical projection of $\R^N$ onto the first $d$ variables. Then, if $u$ solves $-\Delta_\X u\leq0$ in $\R^d$, the function $v=u\circ \pi$ solves $-\Delta_\G v\leq0$ in $\G\simeq\R^N$. 
Therefore, the failure of the Liouville property readily follows from Proposition \ref{nonex} on $\R^N$, which gives the existence of a non-constant bounded smooth subsolution to $-\Delta_\X u=0$ on $\R^d$ when the fields satisfy the hypotheses (H1)-(H2). }
\end{proof}

\subsection{{ The distance from a constant at infinity}}
{ 
When the Liouville property does not hold one would to like to evaluate how far is a sub- or supersolution from 
{the constant of the Liouville property}. In the Euclidean case this was done by V. Kurta for a class of quasilinear equations in \cite{Kurta} via integral methods (see also \cite{Veron} for similar test function methods applied to semilinear equations on the Heisenberg group) 
{and more recently by L. D'Ambrosio and E. Mitidieri in \cite{DM19}}. Let us first note that for the classical Laplacian in dimension $d\geq3$ the supersolution 
 of Proposition \ref{nonex} is $u_1(x)=(1+|x|^2)^{\frac{2-d}{2}}$, which satisfies
\[
\liminf_{r\to\infty}\left[\sup_{r\leq|x|\leq 2r}(u_1(x)-0)\right]r^{d-2}=C .
\]
Theorem 2 in \cite{Kurta} implies that, for any 
 $\nu\in(0,1)$,  a superharmonic function $u\geq 0$ either is constant in $\R^d$ or it satisfies 
\begin{equation}\label{condk}
\liminf_{r\to\infty}\left[\sup_{r\leq|x|\leq 2r}(u(x)-0)\right]r^{\frac{d-2}{1-\nu}}=+\infty .
\end{equation}
This gives the ``sharp distance at infinity" of nonconstant supersolutions larger than $c$ from the constant $c$.

Next we give the corresponding result in a homogeneous Carnot group, where we expect the Euclidean dimension $d$ to be replaced by the homogeneous dimension $Q$. In fact, the supersolution  of Proposition \ref{nonex} satisfies
\[
\liminf_{r\to\infty}\left[\sup_{r\leq\rho(x)\leq 2r}(u(x)-0)\right]r^{Q-2}=C .
\]
\begin{thm}\label{sharpdist}
Let $u\geq c$ be a non-constant { weak} solution to $-\Delta_\G u\geq 0$ in $\G\simeq \R^d$ { such that $u\in L^\infty_{\mathrm{loc}}(\R^d)$}. 
Then, for any $\nu\in(0,1)$, 
\begin{equation}\label{condksub}
\liminf_{r\to\infty}\left[\sup_{r\leq\rho(x)\leq 2r}(u(x)-c)\right]r^{\frac{Q-2}{1-\nu}}=+\infty
\end{equation}
\end{thm}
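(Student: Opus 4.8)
My plan is to avoid integral (test‑function) machinery altogether and instead reduce everything to a monotonicity statement for the minimum of $u$ over the gauge spheres, using the explicit $\Delta_\G$‑harmonic function $\rho^{2-Q}$ provided by Proposition \ref{fund} as a barrier. First I would normalize by replacing $u$ with $u-c$, so that $u\ge 0$ is a nonconstant weak supersolution with $u\in L^\infty_{\mathrm{loc}}$. If $\inf_{\R^d}u=\delta>0$ the statement is immediate, since then $\sup_{r\le\rho\le 2r}u\ge\delta$ while $r^{(Q-2)/(1-\nu)}\to\infty$; hence I would assume $\inf_{\R^d}u=0$. By subelliptic De Giorgi--Nash--Moser theory $u$ admits a lower semicontinuous representative for which the weak and strong minimum principles for $\Delta_\G$‑superharmonic functions hold, cf.\ \cite{BG,BLU}; since $u\ge0$ is nonconstant it cannot attain the value $0$, so $u>0$ everywhere. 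I would then set $m(R):=\min_{\rho(x)=R}u(x)$, a minimum that is attained and \emph{strictly positive} because $\{\rho=R\}$ is compact and $u$ is positive and l.s.c. Comparing $u$ on the gauge ball $\{\rho<R_2\}$ with the constant $m(R_2)$ shows $m$ is non‑increasing, and if $m(R)\to\ell>0$ the minimum principle would force $u\ge\ell$ on all of $\R^d$, contradicting $\inf u=0$; thus $m(R)\downarrow 0$.

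The key step would be to prove that $g(R):=m(R)\,R^{Q-2}$ is \emph{non‑decreasing}. By Proposition \ref{fund} the function $\rho^{2-Q}$ is $\Delta_\G$‑harmonic in $\G\setminus\{0\}$, so for fixed $R$ the function $v_R(x):=m(R)\,R^{Q-2}\rho(x)^{2-Q}$ is $\Delta_\G$‑harmonic in $\{\rho>R\}$, equals $m(R)$ on $\{\rho=R\}$, and tends to $0$ at infinity. I would show $u\ge v_R$ in $\{\rho>R\}$ by a comparison on exhausting annuli: for $\varepsilon>0$, the harmonic function $v_R-\varepsilon$ satisfies $v_R-\varepsilon<m(R)\le u$ on $\{\rho=R\}$ and $v_R-\varepsilon<0\le u$ on $\{\rho=R''\}$ once $R''$ is large; the weak comparison principle on $\{R<\rho<R''\}$ then gives $u\ge v_R-\varepsilon$ there, and letting $R''\to\infty$ and $\varepsilon\to0$ yields $u\ge v_R$ in $\{\rho>R\}$. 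Taking the minimum over $\{\rho=t\}$ for $t>R$ gives $m(t)\ge m(R)R^{Q-2}t^{2-Q}$, i.e.\ $g(t)\ge g(R)$, which is the claimed monotonicity.

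With this in hand the conclusion is a one‑line computation. Fixing any $R_0>0$, for $r\ge R_0$ the sphere $\{\rho=r\}$ lies inside the annulus $\{r\le\rho\le2r\}$, so $\sup_{r\le\rho\le 2r}u\ge m(r)$, and therefore
\[
\Big[\sup_{r\le\rho(x)\le 2r}\big(u(x)-c\big)\Big]\,r^{\frac{Q-2}{1-\nu}}\ \ge\ m(r)\,r^{\frac{Q-2}{1-\nu}}\ =\ g(r)\,r^{(Q-2)\frac{\nu}{1-\nu}}\ \ge\ g(R_0)\,r^{(Q-2)\frac{\nu}{1-\nu}}.
\]
Since $\nu\in(0,1)$ and $Q>2$, the exponent $(Q-2)\frac{\nu}{1-\nu}$ is positive and $g(R_0)>0$, so the right‑hand side tends to $+\infty$; this establishes \eqref{condksub}, in fact with the $\liminf$ replaced by a genuine limit. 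Note that this rate is sharp, since the supersolution $u_1$ of Proposition \ref{nonex} has $g(R)\to\mathrm{const}$, so the above bound degenerates exactly at $\nu=0$.

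The main obstacle I anticipate is purely technical: making the comparison/minimum principle rigorous for a merely $L^\infty_{\mathrm{loc}}$ weak supersolution and controlling its behaviour at infinity. This requires passing to the lower semicontinuous representative, invoking the strong minimum principle of \cite{BG,BLU} to obtain $u>0$, and justifying comparison on the \emph{unbounded} exterior domain through the exhausting annuli together with the $\varepsilon$‑shift, as above. This barrier argument replaces the nonlinear‑capacity/integral method of \cite{Kurta}, and is available here precisely because the gauge furnishes the explicit harmonic power $\rho^{2-Q}$.
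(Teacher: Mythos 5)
Your argument is correct, but it proceeds by a genuinely different route from the paper. The paper follows Kurta's nonlinear-capacity method: it tests the weak formulation with $(u-c+\epsilon)^{-\nu}\zeta^2$, derives the stronger integral statement $\liminf_{r\to\infty}r^{-2}\int_{r\leq\rho\leq 2r}(u-c)^{1-\nu}\,dx=+\infty$, and only then converts this into \eqref{condksub} using the volume growth of gauge balls and local boundedness. You instead run a linear barrier argument: after the strong minimum principle forces $u-c>0$, you compare $u-c$ on exterior annuli with the explicit $\Delta_\G$-harmonic function $m(R)R^{Q-2}\rho^{2-Q}$ and obtain the monotonicity of $m(R)R^{Q-2}$, i.e.\ the pointwise bound $\inf_{\rho=r}(u-c)\geq C r^{2-Q}$. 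This is in fact a stronger conclusion than \eqref{condksub} (you get a genuine limit, the sharp rate $r^{2-Q}$ with no $\nu$-loss, and it shows directly why the example of Proposition \ref{nonex} is extremal); it is essentially the Hadamard-type minimum-growth estimate alluded to in the remark after Theorem \ref{liohad}. What the paper's integral method buys in exchange is robustness: it never needs the strong minimum principle, pointwise representatives, or an explicit harmonic barrier, and, as noted in the remark following the theorem, it transfers verbatim to the horizontal $p$-Laplacian $\Delta_{\G,p}$ for $1<p<Q$, where on a general Carnot group no explicit radial $p$-harmonic function is available (since $|D_\G\rho|$ is not constant, $\rho^{(p-Q)/(p-1)}$ is not $p$-harmonic). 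Your approach would not cover that extension.

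Two technical points you should make explicit if you write this up. First, the identification of a Sobolev weak supersolution with a lower semicontinuous $\Delta_\G$-superharmonic function obeying the strong minimum principle is standard but needs the weak Harnack inequality for sub-Laplacians on Carnot groups (this is in \cite{BLU}; \cite{BG} concerns viscosity solutions and is not the right reference here). Second, in the annulus comparison the pointwise inequality $u>v_R-\varepsilon$ on the gauge spheres must be upgraded to $(v_R-\varepsilon-u)^+\in W^{1,2}_0$ of the annulus before testing; this follows because $u-(v_R-\varepsilon)$ is l.s.c.\ and positive on the compact boundary spheres, hence positive on a neighbourhood, so the truncation has compact support. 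With these two points supplied, the proof is complete.
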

\begin{rem}
By weak supersolution we mean a measurable function $u:\G\simeq \R^d\to\R$ defined on $\G$ which is locally integrable in $\R^d$, $|D_\G u|\in L^2_{\mathrm{loc}}(\R^d)$ and 
\[
\int_{\R^d}\sum_{i=1}^m X_i \varphi X_i u\,dx\geq0
\] 
for all nonnegative test functions $\varphi\in W^{1,2}(\G)$ {with compact support}, where $W^{1,q}(\G)$, $q>1$, is the standard horizontal Sobolev space. 
\end{rem}
}

\begin{proof}
The proof relies on proving {the following stronger property: if $u$ is a non-constant supersolution to $-\Delta_\G u=0$ in $\R^d$, then it satisfies the condition}
\begin{equation}\label{condk2}
\liminf_{r\to\infty}r^{-2}\int_{r\leq\rho(x)\leq 2r}(u(x)-c)^{1-\nu}\,dx=+\infty
\end{equation}
for any {fixed} $\nu\in(0,1)$. Then, the latter would imply \eqref{condksub} if {we further have $u\in L^\infty_{\mathrm{loc}}(\R^d)$}.

To show the condition \eqref{condk2} one adapts to Carnot groups the proof in the Euclidean case of 
 \cite[Theorem 2]{Kurta}. We summarize here below the crucial steps, and refer to the earlier version of this manuscript\footnote{arXiv: 2109.11331v1, September 23, 2021} and to \cite{Kurta} for a thorough discussion. The idea is to use a variational argument, taking the test function
\[
\varphi(x)=(u-c+\epsilon)^{-\nu}\zeta^2(x)\ ,\nu\in (0,1)
\] 
where $\zeta:\R^d\to [0,1]$ is smooth and identically 1 on the Koranyi ball $\overline{B}_\G(0,r)$ and 0 outside $B_\G(0,2r)$, to be specialized later. Rearranging the weak formulation of the problem and using Cauchy-Schwarz, H\"older and Young inequalities one deduces that
\begin{equation}\label{final}
\liminf_{r\to\infty}\int_{A_r} 
|D_\G \zeta|^2(u-c)^{1-\nu}=+\infty\ .
\end{equation}
This step concludes the proof by choosing $\zeta(x)=\psi(\rho/2r)$, where $\rho$ is a homogeneous norm of the structure, $\psi:[0,+\infty]\to[0,1]$ is a smooth function which equals 1 on $[0,1/2]$, 0 on $[1,\infty)$ and satisfies the inequality
$ |D_\G \zeta|\leq Cr^{-1}
$
(since $|D_\G\rho|\leq \tilde C$ in view of the fact that $D_\G\rho$ is homogeneous of degree 0, cf \cite[Remark 5.5.2]{BLU}) for arbitrary $r>0$. 
\end{proof}
\begin{rem}
Theorem \ref{sharpdist} agrees with the growth rate of superharmonic functions found in \cite[Theorem 1.1]{Biri}, at least in the context of the Heisenberg group. 
{Moreover, a more general decay rate of positive supersolutions has been recently studied in \cite{DM19} for operators in divergence form with regular coefficients. This can be obtained combining Corollary 23 and Remark 12 in \cite{DM19}.}
\end{rem}

\begin{rem}
Using a slight modification of the above argument and following \cite{Kurta}, one can prove that any nonconstant {and locally bounded weak} solution to {$-\Delta_{\G,p} u:=-\mathrm{div}_\G(|D_\G u|^{p-2}D_\G u)\geq 0$ in $\G$, where $\mathrm{div}_\G$ stands for the horizontal divergence along the vector fields,} with $u\geq c$ and $1<p<Q$, satisfies
\begin{equation}\label{condksubp}
\liminf_{r\to\infty}\left[\sup_{r\leq\rho(x)\leq 2r}(u(x)-c)\right]r^{\frac{Q-p}{p-1-\nu}}=+\infty
\end{equation}
with any fixed $\nu\in(0,p-1)$. {This agrees with the Liouville property found in \cite[Corollary 4.3]{Da} when $p\geq Q$.}
\end{rem}

\section{Fully nonlinear PDEs on Carnot groups of step 2}
\label{sec_fully}
\subsection{
Definitions and preliminaries}
\label{Carnot}
In this section, we introduce step-2 Carnot groups following \cite[Chapter 3]{BLU}. Set $\R^d=\R^m\times\R^n$. Given $n$ skew-symmetric matrices $B^{(1)},....,B^{(n)}$ with dimension $m\times m$ and real entries, one defines the group operation
\[
(x,  t)\bullet (x',  t'
)=(x+x',  t+t'+2\langle Bx, x'\rangle) \,,
\]
where, $\langle Bx, x'\rangle$ stands for the $n$-tuple
\[
(\langle B^{(1)}x,x'\rangle,...,\langle B^{(n)}x,x'\rangle)
\]
and $\langle \cdot,\cdot\rangle$ denotes the inner product in $\R^m$. Then, $(\R^d,\bullet)$ is a Lie group, while the dilation
\[
\delta_\lambda:\R^d\to\R^d\ ,\delta_\lambda((x,t))=(\lambda x,\lambda^2 t)
\]
is an automorphism of $(\R^d,\bullet)$ for any positive $\lambda$. Then $\G=(\R^d,\bullet,\delta_\lambda)$ is a homogeneous Lie group. When $B^{(s)}=(b^{(s)}_{jl})$, $j,l=1,...,m$, and for ${q}=(x_1,...,x_m,t_1,...,t_n)$, a basis of the Lie algebra of $\G$ is given by the $m+n$ left invariant vector fields
\[
X_j({q})=\partial_{x_j}+2\sum_{s=1}^n\sum_{l=1}^mb^{(s)}_{jl}x_l\partial_{t_s}\ ,j=1,...,m\ ,
\]
\[
Y_j({q})=\partial_{t_s}\ ,s=1,...,n
\]
As pointed out in \cite{BLU}, we have $n\leq \frac{m(m-1)}{2}$, that allows to relate the dimension of the horizontal and vertical layers. 
\begin{ex} 
The Heisenberg group is a step 2 Carnot group with $m=2d$ and $n=1$ on $\R^{2d+1}$ by taking
\[
B^{(1)}=\begin{pmatrix}
0 &I_{d}\\
-I_d & 0
\end{pmatrix} \, .
\]
\end{ex}
\begin{ex} 
 H-type groups. They are Carnot groups of step 2 
such that the matrices $B^{(s)}$ satisfy the further conditions
\begin{itemize}
\item $B^{(s)}$ are $m\times m$ skew symmetric and orthogonal. 
\item $B^{(k)}B^{(s)}+B^{(s)}B^{(k)}=0$ for $k,s=1,...,n$ with $k\neq s$.
\end{itemize}
In this case the homogeneous dimension is defined as $Q=m+2n$. {Prototype} examples of matrices fulfilling the above conditions, {cf. \cite[p.687]{BLU}}, are
\begin{equation}\label{matH}
B^{(1)}=\begin{pmatrix}
0 & -1 & 0 & 0\\
1& 0& 0& 0\\
0 & 0& 0 &-1\\
0 & 0 & 1 & 0
\end{pmatrix}\ ;
B^{(2)}=\begin{pmatrix}
0 & 0 & 1 & 0\\
0& 0& 0& -1\\
-1 & 0& 0 &0\\
0 & 1 & 0 & 0
\end{pmatrix}\ ;
B^{(3)}=\begin{pmatrix}
0 & 0 & 0 & 1\\
0& 0& 1& 0\\
0 & -1& 0 &0\\
-1 & 0 & 0 & 0
\end{pmatrix}
\end{equation}
which generate a H-type group on $\R^7=\R^4\times\R^3$,  {and we will focus on them in the next sections}. {Other explicit examples can be found in \cite[Remark 3.6.6]{BLU}}.
{Also the Heisenberg group  is 
a H-type group} with $m=2d$ and $n=1$ on $\R^{2d+1}$, by taking $B^{(1)}$ as in the previous example.

For H-type groups a gauge is
\begin{equation}
\label{rhoHtype}
\rho_{\mathbb{H}}(x):=\left[(x_1^2+...+x_m^2)^2+ t_{1}^2+...+t_{n}^2\right]^{\frac{1}{4}}
\end{equation}
and Kaplan \cite{Kaplan} (see also \cite{BLU}) proved that 
 $\Gamma_\He:=\rho_\He^{2-Q}$, $Q=m+2n$, is the fundamental solution to the corresponding sub-Laplace equation. 
 (Here we made the change of coordinates for the vertical layer 
 $t'=t/4$ to normalize a constant in the original definition of $\rho_{\mathbb{H}}$ in \cite{Kaplan}).
\end{ex}
\begin{ex} 
\label{free}
Free Carnot groups $\mathbb{F}_{r}=\mathbb{F}_{r,2}\sim\R^r\times \R^{\frac{r(r-1)}{2}}$. 
 They are Carnot groups of step 2 
where the matrices $B^{(s)}$ have the entry $-1$ in position $(j,l)$, $+1$ in position $(l,j)$, and $0$ otherwise.  In $\R^d$, let us denote the coordinates of the first layer by $x_k$, $1\leq k\leq r$ and those of the second layer by $t_{kj}$, $1\leq j<k\leq r$. Let $\partial_k$ and $\partial_{kj}$ the standard basis vectors in this coordinate system. Free Carnot groups of step-2 and $r$ generators are spanned by the vector fields
\begin{equation*}
X_k:=\partial_k+2\left(\sum_{j>k}x_j\partial_{jk}-\sum_{j<k}x_j\partial_{kj}\right)\text{ if }1\leq k\leq r\ .
\end{equation*}
Then, denote by
\begin{equation*}
X_{kj}:=\partial_{kj}\text{ if }1\leq j<k\leq r\ .
\end{equation*}
the fields of the vertical layer. The Carnot structure of $\mathbb{F}_r$ is given by
\begin{equation*}
V_1=\text{Span}\{X_k:1\leq k\leq r\}\text{ and }V_2=\text{Span}\{X_{kj}:1\leq j<k\leq r\}
\end{equation*}
The commutation relations for $1\leq j<k\leq r$ and $1\leq i\leq r$ are given by
\begin{equation*}
[X_k,X_j]=4X_{kj}\text{ and }[X_i,X_{kj}]=0
\end{equation*}
Denote the coordinates by
\begin{equation*}
(x,t)=(x_H,x_V)=(x_1,...,x_{r},t_{2,1},...,t_{r,r-2}, t_{r,r-1})\in\R^r\times\R^{\frac{r(r-1)}{2}}
\end{equation*}
and consider the homogeneous norm
\begin{equation}\label{rf}
\rho_\mathbb{F}(x)=\left[|x_H|^4+
|x_V|^2\right]^{\frac{1}{4}}\ .
\end{equation}
It is immediate to recognize that free Carnot group of step-2 coincides with the Heisenberg group only in one dimension, i.e. $\He^1\equiv(\R^3,\cdot)$. Indeed for $r=2d$ generators, we have a free Carnot group of step 2 if and only if the following equality holds
$2d+1=2d+
{2d(2d-1)}/{2}\ ,$
which is fulfilled only when $d=1$.
\end{ex}

\subsection{Fundamental solutions to Pucci's extremal equations on a class of H-type groups}\label{sec_fund}
In this section and the next we consider the specific case of matrices \eqref{matH}. Then $m=4, n=3$, $Q=10$, $\rho=\rho_{\mathbb{H}}$ is given by \eqref{rhoHtype}, and we denote the coordinates
\begin{equation*}
(x,t)=(x_H,x_V)=(x_1,...,x_4,t_1,...,t_3)  . 
\end{equation*}
It is immediate to check that the algebra is spanned by the vector fields
\begin{equation}\label{fieldsH}
\begin{aligned}
& X_1=\partial_{x_{1}}-2x_{2}\partial_{t_1}+2x_3\partial_{t_2}+2x_4\partial_{t_3} , \quad 
X_2=\partial_{x_{2}}+2x_{1}\partial_{t_1}-2x_4\partial_{t_2}+2x_3\partial_{t_3} ,\\
& X_3=\partial_{x_3}-2x_{4}\partial_{t_1}-2x_1\partial_{t_2}-2x_2\partial_{t_3} , \quad 
 X_4=\partial_{x_{4}}+2x_{3}\partial_{t_1}+2x_2\partial_{t_2}-2x_1\partial_{t_3}\ .
\end{aligned}
\end{equation}
We now give two 
 simple algebraic lemmas for later use. 
 The first is taken from 
  \cite{LeoniJMPA}.
\begin{lemma}\label{eiggen}
Let $a,b,c,s\in\R$ and $v,w\in \R^m$ be unitary vectors and define
\[
A=sI_m+av\otimes v+bw\otimes w+c(v\otimes w+w\otimes v)\ .
\]
Then, the eigenvalues of $A$ are
\begin{itemize}
\item $s$ with multiplicity at least $m-2$ and eigenspace $\mathrm{Span}\{v,w\}^{\bot}$,  
i.e.,  the orthogonal complement of $ \mathrm{Span}\{v,w\}$;
\item $s+\frac{a+b+2cv\cdot w\pm \sqrt{(a+b+2cv\cdot w)^2+4(1-(v\cdot w)^2)(c^2-ab)}}{2}$ which are simple, if different from $s$.
\end{itemize}
In particular, when either $(v\cdot w)^2=1$ or $c^2=ab$, the eigenvalues are $s$ with multiplicity $m-1$ and $s+a+b+2cv\cdot w$, which is simple.
\end{lemma}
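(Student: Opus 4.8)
The plan is to exploit that $A$ is a symmetric rank-$2$ perturbation of the scalar matrix $sI_m$. First I would note that $A$ is symmetric (each of $v\otimes v$, $w\otimes w$ and $v\otimes w+w\otimes v$ is), hence orthogonally diagonalizable with real eigenvalues, and write $A=sI_m+B$ with $B:=a\,v\otimes v+b\,w\otimes w+c(v\otimes w+w\otimes v)$. The key structural observation is that the range of $B$ is contained in $\mathrm{Span}\{v,w\}$: since $(v\otimes v)u=(v\cdot u)v$ and analogously for the other three terms, one has $Bu\in\mathrm{Span}\{v,w\}$ for every $u$. Consequently every $u\perp\mathrm{Span}\{v,w\}$ lies in $\ker B$, i.e.\ is an eigenvector of $A$ for the eigenvalue $s$. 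When $v,w$ are linearly independent this orthogonal complement has dimension $m-2$, yielding the eigenvalue $s$ with multiplicity at least $m-2$ and the stated eigenspace $\mathrm{Span}\{v,w\}^{\perp}$.

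Next I would pin down the remaining two eigenvalues by restricting $B$ to the invariant subspace $\mathrm{Span}\{v,w\}$. Because eigenvalues are independent of the chosen basis, I would compute the matrix of $B|_{\mathrm{Span}\{v,w\}}$ directly in the (generally non-orthonormal) basis $\{v,w\}$, using $v\cdot v=w\cdot w=1$ and abbreviating $p:=v\cdot w$. A short computation gives $Bv=(a+cp)v+(c+bp)w$ and $Bw=(c+ap)v+(b+cp)w$, so in this basis $B$ is represented by
\[
\begin{pmatrix} a+cp & c+ap \\ c+bp & b+cp \end{pmatrix}.
\]
Its trace is $a+b+2cp$, and its determinant simplifies, after expanding and cancelling the common cross terms $cp(a+b)$, to $(1-p^2)(ab-c^2)$.

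Then I would solve the characteristic equation $\mu^2-(a+b+2cp)\mu+(1-p^2)(ab-c^2)=0$ for the two eigenvalues $\mu$ of $B$ on this plane. The discriminant equals $(a+b+2cp)^2-4(1-p^2)(ab-c^2)=(a+b+2cp)^2+4\bigl(1-(v\cdot w)^2\bigr)(c^2-ab)$, and adding $s$ back to each root reproduces exactly the two simple eigenvalues in the statement. For the degenerate cases I would argue separately: if $(v\cdot w)^2=1$ then $w=\pm v$ and $A=sI_m+(a+b\pm 2c)\,v\otimes v$ collapses to a rank-$1$ perturbation, whose orthogonal complement has dimension $m-1$, giving $s$ with multiplicity $m-1$ and the simple value $s+a+b+2c\,(v\cdot w)$; if instead $c^2=ab$ the determinant above vanishes, so one root of the quadratic is $0$ and the other is the trace, again producing $s$ with multiplicity $m-1$ and $s+a+b+2c\,(v\cdot w)$.

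The only genuinely delicate point is the determinant computation together with the justification that computing in the non-orthonormal basis $\{v,w\}$ still returns the correct eigenvalues. This is legitimate because the eigenvalues of a linear operator restricted to an invariant subspace are a similarity invariant, independent of the representing basis, while the symmetry of $A$ guarantees both that $\mathrm{Span}\{v,w\}$ and its orthogonal complement are truly invariant and that the full spectrum of $A$ is accounted for by these two pieces.
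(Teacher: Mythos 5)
Your proof is correct and complete. The paper itself gives no proof of this lemma, stating only that it is taken from \cite{LeoniJMPA}, so there is no in-paper argument to compare against; your argument is the natural (and essentially standard) one. All the computations check out: $Bv=(a+cp)v+(c+bp)w$ and $Bw=(c+ap)v+(b+cp)w$ with $p=v\cdot w$, the trace $a+b+2cp$ and determinant $(1-p^2)(ab-c^2)$ of the $2\times 2$ representing matrix are right, and the discriminant rearranges exactly to the expression in the statement. You also correctly isolate the two points that need care: the legitimacy of computing the spectrum of $B|_{\mathrm{Span}\{v,w\}}$ in the non-orthonormal basis $\{v,w\}$ (a similarity invariance argument), and the fact that the generic computation presupposes $v,w$ linearly independent, so the case $(v\cdot w)^2=1$ must be, and is, handled separately as a rank-one perturbation. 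The only cosmetic remark is that in the statement the eigenspace of $s$ may strictly contain $\mathrm{Span}\{v,w\}^{\perp}$ when one of the two roots on the plane coincides with $s$, but that is an imprecision of the lemma as stated, not of your proof.
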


\begin{lemma}\label{hesH}
We have $X_i\rho=\frac{\mu_i}{\rho^3}$ where $\mu_i$, $i=1,...,4$, are defined via
\[
\mu_1=-x_2t_1+x_3t_2+x_4t_3\ ,\mu_2:=x_1t_1-x_4t_2+x_3t_3\ ,\mu_3:=-x_4t_1-x_1t_2-x_2t_3\ ,\mu_4:=x_3t_1+x_2t_2-x_1t_3\ .
\]
Moreover, $|D_{\He}\rho|^2=|x_H|^2/\rho^2$,
\[
(D^2_{\He}\rho)^*=3\frac{|D_\He\rho|^2}{\rho}I_4-\frac{3}{\rho}D_\He\rho\otimes D_\He\rho
\]
and
\[
(D^2_{\He}f(\rho))^*=3\frac{f'(\rho)}{\rho}|D_\He\rho|^2I_4+\left(f''(\rho)-3\frac{f'(\rho)}{\rho}\right)D_\He\rho\otimes D_\He\rho
\]
In particular, the eigenvalues of $(D^2_{\He}f(\rho))^*$ are $3\frac{f'(\rho)}{\rho}|D_\He\rho|^2$ with multiplicity 3 and $f''(\rho){|D_\He\rho|^2}$ which is a simple eigenvalue. Finally
\[
\Delta_\He f(\rho)=|D_\He\rho|^2\left(f''(\rho)+9\frac{f'(\rho)}{\rho}\right)
\]
where $9=Q-1$, $Q=10$ being the homogeneous dimension.
\end{lemma}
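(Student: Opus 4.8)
The plan is to reduce every assertion to a direct differentiation of the explicit gauge \eqref{rhoHtype} along the explicit fields \eqref{fieldsH}, using the chain rule together with the algebraic identities forced by the H-type conditions on the matrices \eqref{matH}. Throughout I write $\rho^4=|x_H|^4+|x_V|^2$ with $|x_H|^2=x_1^2+\cdots+x_4^2$ and $|x_V|^2=t_1^2+t_2^2+t_3^2$, so that $X_i\rho=\tfrac14\rho^{-3}X_i(\rho^4)$.

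First I would compute the first-order horizontal derivatives. Since $X_i=\partial_{x_i}+2\sum_{s}(B^{(s)}x)_i\partial_{t_s}$, differentiating $\rho^4$ gives $X_i(\rho^4)=4|x_H|^2x_i+4\mu_i$ with $\mu_i=\sum_s t_s(B^{(s)}x)_i$, which is exactly the list $\mu_1,\dots,\mu_4$ of the statement, so $X_i\rho=\rho^{-3}(|x_H|^2x_i+\mu_i)$. To obtain $|D_\He\rho|^2=\sum_i(X_i\rho)^2$ I would expand the square and use two identities: $\sum_i x_i\mu_i=0$, which holds because each $B^{(s)}$ is skew-symmetric (so $x\cdot B^{(s)}x=0$), and $\sum_i\mu_i^2=|x_H|^2|x_V|^2$. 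The second is where the full H-type hypothesis enters: writing $\mu=\sum_s t_sB^{(s)}x$ one has $|\mu|^2=\sum_{s,s'}t_st_{s'}\,x^{T}(B^{(s)})^{T}B^{(s')}x$, and orthogonality $(B^{(s)})^{T}B^{(s)}=I$ collapses the diagonal into $\sum_s t_s^2|x_H|^2$, while the anticommutation $B^{(k)}B^{(s)}+B^{(s)}B^{(k)}=0$ forces the off-diagonal forms $x^{T}B^{(s)}B^{(s')}x$ to vanish for $s\ne s'$. Combining, $|D_\He\rho|^2=\rho^{-6}(|x_H|^6+|x_H|^2|x_V|^2)=|x_H|^2/\rho^2$.

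The heart of the proof is the symmetrized Hessian $(D^2_\He\rho)^*$. I would differentiate $X_j\rho=\rho^{-3}(|x_H|^2x_j+\mu_j)$ once more, symmetrize in $i,j$, and then reorganize the result into the scalar-plus-rank-one form $\alpha I_4+\beta\,D_\He\rho\otimes D_\He\rho$. The main obstacle is precisely this reorganization: the raw expression contains several families of terms (those from $X_i$ hitting $\rho^{-3}$, those from hitting $|x_H|^2x_j$, and the mixed $x$--$t$ terms coming from $\mu_j$), and one must show that everything which is neither a multiple of $I_4$ nor of $D_\He\rho\otimes D_\He\rho$ cancels. This cancellation again rests on the skew-symmetry and the orthogonality/anticommutation of the $B^{(s)}$, on the identity $\sum_i x_i\mu_i=0$, and on the value of $|D_\He\rho|^2$ just computed; matching the trace against $\Delta_\He\rho$ gives a useful consistency check. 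The outcome is $(D^2_\He\rho)^*=3\rho^{-1}|D_\He\rho|^2 I_4-3\rho^{-1}D_\He\rho\otimes D_\He\rho$. Once this is in hand, the chain rule for the symmetrized horizontal Hessian, $(D^2_\He f(\rho))^*_{ij}=f''(\rho)(X_i\rho)(X_j\rho)+f'(\rho)(D^2_\He\rho)^*_{ij}$, yields the stated formula for $(D^2_\He f(\rho))^*$ by direct substitution.

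Finally, the eigenvalues follow from Lemma \ref{eiggen} applied with the single unit vector $v=w=D_\He\rho/|D_\He\rho|$ and coefficients $s=3\rho^{-1}f'(\rho)|D_\He\rho|^2$, $a=(f''(\rho)-3\rho^{-1}f'(\rho))|D_\He\rho|^2$, $b=c=0$: since $c^2=ab=0$ we fall into the degenerate case, giving $s$ with multiplicity $3$ and the simple eigenvalue $s+a=f''(\rho)|D_\He\rho|^2$. Summing the four eigenvalues gives $\Delta_\He f(\rho)=\Tr (D^2_\He f(\rho))^*=|D_\He\rho|^2(f''(\rho)+9\rho^{-1}f'(\rho))$, in agreement with Proposition \ref{subrad} for $Q=10$ (so $Q-1=9$); alternatively this last identity may simply be quoted from Proposition \ref{subrad}.
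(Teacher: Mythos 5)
Your proposal is correct and follows essentially the same route as the paper: direct differentiation of $\rho^4$ along the fields \eqref{fieldsH}, identification of the scalar-plus-rank-one form of $(D^2_\He\rho)^*$, the chain rule, and Lemma \ref{eiggen} for the eigenvalues. The only real difference is bookkeeping: where the paper expands $\sum_i(X_i\rho)^2$ and the second derivatives entry by entry, you organize the same cancellations through the identities $x\cdot B^{(s)}x=0$, $(B^{(s)})^TB^{(s)}=I$ and the anticommutation relations, which is cleaner and makes the mechanism visible.

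Two points deserve attention. First, your computation gives $X_i\rho=\rho^{-3}(|x_H|^2x_i+\mu_i)$ with $\mu_i=\sum_s t_s(B^{(s)}x)_i$; this is the correct derivative, but it is \emph{not} of the form $\mu_i/\rho^3$ with the $\mu_i$ listed in the statement. The displayed list in the lemma omits the term $x_i|x_H|^2$ (a slip in the statement: the paper's own proof, and all later uses such as $|\mu|=\rho^2|x_H|$ and $D_\He(\log\rho)=\mu/\rho^4$, take $\mu_i:=x_i|x_H|^2+(\text{the }t\text{-part})$), so you should either redefine $\mu_i$ accordingly or note the discrepancy. Second, you attribute the collapse of $(D^2_\He\rho)^*$ to the form $\alpha I_4+\beta\,D_\He\rho\otimes D_\He\rho$ to ``skew-symmetry and the orthogonality/anticommutation of the $B^{(s)}$''; these general H-type axioms alone do not suffice. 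The diagonal entries of the raw Hessian involve $x_i^2+\sum_s(B^{(s)}x)_i^2$, and their being independent of $i$ (equal to $|x_H|^2$) uses that $\{x,B^{(1)}x,B^{(2)}x,B^{(3)}x\}$ is an orthogonal basis of $\R^4$, i.e.\ that $n=m-1$ for the matrices \eqref{matH}; for a general H-type group (e.g.\ $\He^d$, $d\ge2$) the symmetrized Hessian of $\rho$ has a third distinct eigenvalue. Since you work with the explicit fields, the computation goes through, but the structural explanation should be stated precisely if you want the argument to read as more than a verification.
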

\begin{proof}
We compute
\[
X_1\rho=\frac{x_1|x_H|^2}{\rho^3}+\frac{1}{\rho^3}(-x_2t_1+x_3t_2+x_4t_3)=:\frac{\mu_1}{\rho^3}\ ;\ X_2\rho=\frac{x_2|x_H|^2}{\rho^3}+\frac{1}{\rho^3}(x_1t_1-x_4t_2+x_3t_3)=:\frac{\mu_2}{\rho^3}
\]
\[
X_3\rho=\frac{x_3|x_H|^2}{\rho^3}+\frac{1}{\rho^3}(-x_4t_1-x_1t_2-x_2t_3)=:\frac{\mu_3}{\rho^3}\ ;\ X_4\rho=\frac{x_4|x_H|^2}{\rho^3}+\frac{1}{\rho^3}(x_3t_1+x_2t_2-x_1t_3)=:\frac{\mu_4}{\rho^3}\ .
\]
Then, it is immediate to check that
\begin{multline*}
|D_\He\rho|^2=\sum_{i=1}^4(X_i\rho)^2=\frac{|x_H|^6}{\rho^6}+\frac{|x_H|^2(t_1^2+t_2^2+t_3^2)}{\rho^6}\\
+\frac{2}{\rho^6}(-x_2x_3t_1t_2+x_3x_4t_2t_3-x_2x_4t_1t_3-x_1x_4t_1t_2-x_3x_4t_2t_3+x_1x_3t_1t_3+x_4x_1t_1t_2\\
+x_4x_2t_1t_3+x_1x_2t_2t_3+x_3x_2t_1t_2-x_1x_2t_2t_3-x_1x_3t_1t_3)\\
+2\frac{|x_H|^2}{\rho^6}(-x_1x_2t_1+x_3x_1t_2+x_1x_4t_3+x_1x_2t_1-x_4x_2t_2+x_3x_2t_3-x_4x_3t_1-x_1x_3t_2\\
-x_2x_3t_3+x_3x_4t_1+x_3x_4t_1+x_2x_4t_2-x_1x_4t_3)\\
=\frac{|x_H|^6}{\rho^6}+\frac{|x_H|^2(t_1^2+t_2^2+t_3^2)}{\rho^6}=\frac{|x_H|^2}{\rho^6}\rho^4=\frac{|x_H|^2}{\rho^2}
\end{multline*}
We then compute
\[
X_i^2\rho=\frac{|D_\He\rho|^2}{\rho}+2\frac{|x_H|^2}{\rho^3}-\frac{3}{\rho}X_i\rho X_i\rho\ ; X_2X_1\rho=-\frac{t_1}{\rho^3}-\frac{3}{\rho}X_2\rho X_1\rho\ ; X_1X_2\rho=\frac{t_1}{\rho^3}-\frac{3}{\rho}X_1\rho X_2\rho
\]
\[
X_2X_3\rho=-\frac{t_3}{\rho^3}-\frac{3}{\rho}X_2\rho X_3\rho\ ;X_3X_2\rho=\frac{t_3}{\rho^3}-\frac{3}{\rho}X_3\rho X_2\rho\ ; X_2X_4\rho=\frac{t_2}{\rho^3}-\frac{3}{\rho}X_2\rho X_4\rho
\]
\[
X_4X_2\rho=-\frac{t_2}{\rho^3}-\frac{3}{\rho}X_4\rho X_2\rho\ ; X_4X_3\rho=-\frac{t_1}{\rho^3}-\frac{3}{\rho}X_4\rho X_3\rho\ ; X_3X_4\rho=\frac{t_1}{\rho^3}-\frac{3}{\rho}X_3\rho X_4\rho
\]
Then, using the fact that $|D_\He\rho|^2\rho^2=|x_H|^2$, the symmetrized horizontal Hessian takes the form
\[
(D^2_{\He}\rho)^*=3\frac{|D_\He\rho|^2}{\rho}I_4-\frac{3}{\rho}D_\He\rho\otimes D_\He\rho\ .
\]
We then conclude that for a radial function $f=f(\rho)$ we have
\[
(D^2_{\He}f(\rho))^*=3\frac{f'(\rho)}{\rho}|D_\He\rho|^2I_4+\left(f''(\rho)-3\frac{f'(\rho)}{\rho}\right){|D_\He \rho|^2\frac{D_\He\rho}{|D_\He \rho|}\otimes \frac{D_\He\rho}{|D_\He \rho|}.}
\]
We then use {Lemma \ref{eiggen} applied with $m=4$, $s=3\frac{f'(\rho)}{\rho}|D_\He\rho|^2$, $a=\left(f''(\rho)-3\frac{f'(\rho)}{\rho}\right)|D_\He \rho|^2$, $b=c=0$} to conclude that the eigenvalues are $3\frac{f'(\rho)}{\rho}|D_\He\rho|^2$ with multiplicity 3 and {$f''(\rho)|D_\He \rho|^2$,} which is a simple eigenvalue.
\end{proof}

We are now able to compute explicitly the
degenerate Pucci's extremal operators of radial functions. 
\begin{lemma}\label{pucciHtype}
For every $f=f(\rho)$ concave and increasing we have
\[
\mathcal{M}^+_{\lambda,\Lambda}((D^2_{\He}f(\rho))^*)=-|D_\He\rho|^2\left(\lambda(Q-1)\frac{f'(\rho)}{\rho}+\Lambda f''(\rho)\right)
\]
while for $f=f(\rho)$ convex and decreasing we have
\[
\mathcal{M}^{{+}}_{\lambda,\Lambda}((D^2_{\He}f(\rho))^*)=-|D_\He\rho|^2\left(\lambda f''(\rho)+\Lambda(Q-1)\frac{f'(\rho)}{\rho}\right)\ ,
\]
where $Q=10$ is the homogeneous dimension of the H-type group {associated with the matrices \eqref{matH}}.
\end{lemma}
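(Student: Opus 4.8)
The plan is to use the explicit eigenvalue description of $(D^2_{\He}f(\rho))^*$ from Lemma \ref{hesH} and feed it directly into the definition of the Pucci operators. Recall that Lemma \ref{hesH} tells us the symmetrized horizontal Hessian has eigenvalue $3\frac{f'(\rho)}{\rho}|D_\He\rho|^2$ with multiplicity $3$ and the simple eigenvalue $f''(\rho)|D_\He\rho|^2$. Since $Q=10$, the multiplicity $3$ equals $\frac{Q-1}{3}\cdot 1$; more to the point, the three repeated eigenvalues contribute a total of $(Q-1)\frac{f'(\rho)}{\rho}|D_\He\rho|^2 = 9\frac{f'(\rho)}{\rho}|D_\He\rho|^2$, which will produce the factor $(Q-1)$ in the final formulas.

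The key computational step is to determine the signs of these eigenvalues under each hypothesis, because the Pucci operator $\mathcal{M}^+_{\lambda,\Lambda}(M)=-\lambda\sum_{e_i>0}e_i-\Lambda\sum_{e_i<0}e_i$ weights positive and negative eigenvalues differently. First I would treat the concave increasing case: here $f'(\rho)\geq 0$, so the triple eigenvalue $3\frac{f'(\rho)}{\rho}|D_\He\rho|^2$ is nonnegative (using $\rho>0$ and $|D_\He\rho|^2\geq 0$), while concavity gives $f''(\rho)\leq 0$, so the simple eigenvalue $f''(\rho)|D_\He\rho|^2$ is nonpositive. Applying the definition of $\mathcal{M}^+_{\lambda,\Lambda}$, the positive part (the three copies) gets coefficient $-\lambda$ and the negative part (the simple eigenvalue) gets coefficient $-\Lambda$, yielding
\[
\mathcal{M}^+_{\lambda,\Lambda}((D^2_{\He}f(\rho))^*)=-\lambda\cdot(Q-1)\frac{f'(\rho)}{\rho}|D_\He\rho|^2-\Lambda f''(\rho)|D_\He\rho|^2,
\]
which is exactly the claimed first formula after factoring out $-|D_\He\rho|^2$.

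For the convex decreasing case the signs reverse: $f'(\rho)\leq 0$ makes the triple eigenvalue nonpositive, and $f''(\rho)\geq 0$ makes the simple eigenvalue nonnegative. Now in $\mathcal{M}^+_{\lambda,\Lambda}$ the simple (positive) eigenvalue receives coefficient $-\lambda$ and the triple (negative) eigenvalue receives coefficient $-\Lambda$, giving $-\lambda f''(\rho)|D_\He\rho|^2-\Lambda(Q-1)\frac{f'(\rho)}{\rho}|D_\He\rho|^2$, the second formula. I do not anticipate a serious obstacle here: the entire argument is a sign bookkeeping exercise once Lemma \ref{hesH} is in hand. The only point requiring mild care is the treatment of degenerate boundary cases, namely when an eigenvalue vanishes (e.g.\ $f''=0$ or at points where $|D_\He\rho|=0$); but since $\mathcal{M}^+_{\lambda,\Lambda}$ assigns zero contribution to a zero eigenvalue under either weight, both formulas remain valid with equality in those cases, so no separate discussion is needed.
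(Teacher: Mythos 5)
Your proof is correct and follows exactly the route the paper intends: the paper's own proof is a one-line appeal to Lemma \ref{hesH} together with the eigenvalue formulas for the Pucci operators, and your sign bookkeeping (triple eigenvalue $3\frac{f'(\rho)}{\rho}|D_\He\rho|^2$ summing to $(Q-1)\frac{f'(\rho)}{\rho}|D_\He\rho|^2$, simple eigenvalue $f''(\rho)|D_\He\rho|^2$, with signs determined by monotonicity and concavity/convexity) is precisely the omitted computation. Nothing is missing.
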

\begin{proof}
This is a simple consequence of Lemma \ref{hesH} and the formulas for the Pucci's extremal operators.
\end{proof}
We are now ready to construct 
 {classical solutions}, that we call ``fundamental solutions", of
\[
\mathcal{M}^{{\pm}}_{\lambda,\Lambda}((D^2_{\He}u)^*)=0\text{ in }\R^7\backslash\{0\}\ ,
\]
This extends to a different subelliptic structure some results of \cite{CLeoni} and the more general ones in \cite{ArmstrongCPAM} for the uniformly elliptic case, 
as well as  those in \cite{CTchou} for the 
 Heisenberg group. 
  In particular, by taking $\lambda=\Lambda$, our fundamental solutions of the corresponding sub-Laplacian agrees with those found by Kaplan \cite{Kaplan}. These classical solutions depend on the two parameters
\[
\alpha :=\frac{\lambda}{\Lambda}(Q-1)+1=9\frac{\lambda}{\Lambda}+1\geq 1
\]
and 
\[
\beta :=\frac{\Lambda}{\lambda}(Q-1)+1=9\frac{\Lambda}{\lambda}+1\geq 10=Q\ .
\]
Standard calculations similar to those carried out in \cite{CLeoni}, {\cite[Lemma 3.3]{CTchou}} lead to the following
\begin{lemma}
\label{fundsol}
The radial functions
$ 
\Phi_1(x)=\varphi_1(\rho)\text{ and }\Phi_2(x)=\varphi_2(\rho)
 $ 
with
\[
\varphi_1(\rho):=\begin{cases}
C_1\rho^{2-\alpha}+C_2&\text{ for }\alpha<2\\
C_1\log\rho+C_2&\text{ for }\alpha=2\\
-C_1\rho^{2-\alpha}+C_2&\text{ for }\alpha>2\\
\end{cases}
\]
and
\[
\varphi_2(\rho):=C_1\rho^{2-\beta}+C_2
\]
are classical (and hence viscosity) solutions to
\[
\mathcal{M}^+_{\lambda,\Lambda}((D^2_{\He}u)^*)=0\text{ in }\R^7\backslash\{0\}\ .
\]
{In particular, $\varphi_1$ is concave and increasing, while $\varphi_2$ is convex and decreasing with respect to $\rho$.}
Similarly, $\Psi_1=-\Phi_2$ and $\Psi_2=-\Phi_1$ are ``fundamental solutions" of
\[
\mathcal{M}^-_{\lambda,\Lambda}((D^2_{\He}u)^*)=0\text{ in }\R^7\backslash\{0\}\,.
\]
\end{lemma}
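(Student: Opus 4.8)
The plan is to verify the assertion by direct substitution, exploiting the explicit formula for the Pucci maximal operator on radial functions established in Lemma \ref{pucciHtype}. The key structural fact is that each candidate $\varphi_i$ is, on the entire range of the relevant parameter, either concave and increasing or convex and decreasing in $\rho$. Once this sign information is recorded, exactly one of the two branches in Lemma \ref{pucciHtype} applies, and the fully nonlinear equation $\mathcal{M}^+_{\lambda,\Lambda}((D^2_\He u)^*)=0$ collapses to a single linear second-order ODE of Euler type whose solutions I can read off. Throughout I would keep $C_1>0$ without loss of generality.

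First I would treat $\Phi_2=\varphi_2(\rho)=C_1\rho^{2-\beta}+C_2$. Since $\beta\geq Q=10$, the profile $\rho\mapsto\rho^{2-\beta}$ is decreasing and convex, so the convex--decreasing branch of Lemma \ref{pucciHtype} yields
\[
\mathcal{M}^+_{\lambda,\Lambda}((D^2_\He\varphi_2(\rho))^*)=-|D_\He\rho|^2\,C_1(2-\beta)\big(\lambda(1-\beta)+\Lambda(Q-1)\big)\rho^{-\beta}.
\]
The bracket vanishes precisely when $\beta=\frac{\Lambda}{\lambda}(Q-1)+1$, which is exactly the definition of $\beta$; hence $\Phi_2$ solves the equation on $\R^7\setminus\{0\}$. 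For $\Phi_1=\varphi_1(\rho)$ I would instead use the concave--increasing branch, whose governing ODE is $\Lambda f''+\lambda(Q-1)f'/\rho=0$, that is $f''+(\alpha-1)f'/\rho=0$ after inserting $\alpha-1=\frac{\lambda}{\Lambda}(Q-1)$. This Euler equation has characteristic exponents $0$ and $2-\alpha$, which reproduces exactly the three regimes of the statement: the power $\rho^{2-\alpha}$ when $\alpha\neq2$ (increasing and concave once the signs of $C_1$ and of $2-\alpha$ are accounted for, which is why a minus sign appears when $\alpha>2$), and the resonant logarithmic solution $C_1\log\rho+C_2$ when $\alpha=2$. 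In each regime I would first confirm the monotonicity and concavity needed to legitimately invoke the concave--increasing branch, then check that the resulting coefficient vanishes thanks to the definition of $\alpha$.

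Finally, for the minimal operator I would invoke the elementary identity $\mathcal{M}^-_{\lambda,\Lambda}(M)=-\mathcal{M}^+_{\lambda,\Lambda}(-M)$, immediate from the $\sup$/$\inf$ definitions of the extremal operators. It follows that $-u$ solves $\mathcal{M}^-_{\lambda,\Lambda}((D^2_\He u)^*)=0$ whenever $u$ solves the maximal equation; thus $\Psi_1=-\Phi_2$ and $\Psi_2=-\Phi_1$ are solutions of the minimal equation, the relabeling serving only to keep $\Psi_1$ concave--increasing and $\Psi_2$ convex--decreasing. Since $\rho$ is smooth away from the origin and each $\varphi_i$ is smooth on $(0,\infty)$, all these functions are $C^\infty$ on $\R^7\setminus\{0\}$, hence classical and a fortiori viscosity solutions.

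I expect the only genuinely delicate step to be the sign bookkeeping that selects the correct branch of Lemma \ref{pucciHtype} in each parameter regime, since the Pucci operator weights positive and negative eigenvalues by $\lambda$ and $\Lambda$ differently; the resonant case $\alpha=2$ deserves separate attention. Once the monotonicity and convexity are pinned down in each case, the remainder reduces to the routine Euler-equation verification sketched above.
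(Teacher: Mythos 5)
Your verification is correct and is exactly the argument the paper intends: the paper gives no detailed proof, only the remark that "standard calculations" based on Lemma \ref{pucciHtype} (the explicit formula for $\mathcal{M}^{+}_{\lambda,\Lambda}$ on concave--increasing and convex--decreasing radial profiles) yield the result, which is precisely the branch-selection and Euler-equation check you carry out, including the correct sign bookkeeping in the three regimes for $\alpha$ and the reduction of the minimal-operator case via $\mathcal{M}^-_{\lambda,\Lambda}(M)=-\mathcal{M}^+_{\lambda,\Lambda}(-M)$. No gaps.
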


\subsection{Liouville properties on H-type groups}
 We consider the specific case of matrices \eqref{matH} as in the previous section, {and refer to the next Remark \ref{gen} for the general case}.
We begin with the homogeneous Pucci's equation { and improve Corollary \ref{quasi_Carnot} in this special case. The next result  is known in the Euclidean and Heisenberg cases with different proofs, see \cite[Theorem 3.2]{CLeoni}, \cite[Theorem 4.3]{AScpde} and \cite[Theorem 5.2]{CTchou}, and it is new for  H-type groups. We provide a {new} unified proof with the same strategy as in \cite{BG_lio1}.}
\begin{thm}\label{liohad}
Let $\X$ be either the Euclidean or  the Heisenberg vector fields, or the generators of the H-type groups in \eqref{fieldsH}, and $Q$ be the corresponding homogeneous dimension. Let $u$ be a continuous viscosity solution to  $\mathcal{M}^+_{\lambda,\Lambda}((D^2_{\X}u)^*)\leq 0$ (resp., $\mathcal{M}^-_{\lambda,\Lambda}((D^2_{\X}u)^*) \geq 0$) in $\R^d$ bounded from above (resp., below). 
If $Q\leq \frac{\Lambda}{\lambda}+1$, then $u$ is constant. 
\end{thm}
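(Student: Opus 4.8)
The plan is to deduce the statement from the abstract Liouville property of Theorem \ref{main} by exhibiting, under the hypothesis $Q\le \Lambda/\lambda+1$, a Lyapunov function built from the explicit radial solutions of Lemma \ref{fundsol}. The decisive idea --- which is what lets us sharpen Corollary \ref{quasi_Carnot} and circumvent the obstruction of Proposition \ref{nonex} --- is that I will \emph{not} bound $\mathcal{M}^+$ from below by a sub-Laplacian, but instead use as Lyapunov function the fundamental solution of the Pucci operator itself.

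Consider first the subsolution case $\mathcal{M}^+_{\lambda,\Lambda}((D^2_\X u)^*)\le 0$ with $u$ bounded above. The operator $G=\mathcal{M}^+_{\lambda,\Lambda}$ is uniformly subelliptic in the sense of \eqref{unifsubell}, and $G(x,r,p,0)=\mathcal{M}^+_{\lambda,\Lambda}(0)=0$, so the abstract framework applies with $H_i\equiv 0$; in particular $c^\alpha\equiv 0$, which removes any sign requirement on $u$ in Theorem \ref{main}(a). By the last assertion of Theorem \ref{main} it therefore suffices to produce a Lyapunov function for $\mathcal{M}^+=0$, i.e. an exhaustion function that is a supersolution of $\mathcal{M}^+=0$ outside a compact set; since $u$ is bounded above and any such $w\to+\infty$, the bound \eqref{above} is automatic.

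I take $w=\varphi_1$ of Lemma \ref{fundsol}, a classical (hence viscosity) solution of $\mathcal{M}^+=0$ on $\R^7\setminus\{0\}$, so a supersolution outside the unit ball; its local singularity at the origin is immaterial, as only the behaviour outside a compact set enters the definition. The growth of $\varphi_1$ is governed by $\alpha=\tfrac{\lambda}{\Lambda}(Q-1)+1$: for $\alpha<2$ one has $\varphi_1=C_1\rho^{2-\alpha}+C_2\to+\infty$ with $C_1>0$, while for $\alpha=2$ the logarithmic profile $\varphi_1=C_1\log\rho+C_2\to+\infty$ as well, so $w$ is an exhaustion function precisely when $\alpha\le 2$. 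The elementary identity $\alpha\le 2\iff \tfrac{\lambda}{\Lambda}(Q-1)\le 1\iff Q\le \tfrac{\Lambda}{\lambda}+1$ shows this is exactly the hypothesis, and Theorem \ref{main}(a) then forces $u$ to be constant. (Note the consistency with Proposition \ref{nonex}: for the sub-Laplacian $\lambda=\Lambda$ gives $\alpha=Q>2$, so $\varphi_1$ decays and no such Lyapunov function of this type exists.)

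The supersolution case $\mathcal{M}^-_{\lambda,\Lambda}((D^2_\X u)^*)\ge 0$ with $u$ bounded below is symmetric: $W=-\varphi_1=\Psi_2$ of Lemma \ref{fundsol} solves $\mathcal{M}^-=0$ away from the origin and satisfies $W\to-\infty$ under the same threshold, hence is a negative Lyapunov function; \eqref{below} holds because $u$ is bounded below, and Theorem \ref{main}(b) (again with $c^\alpha\equiv 0$) gives \eqref{LP2}. Finally, the Euclidean and Heisenberg cases are covered verbatim: there the symmetrized Hessian of a radial function has the same eigenvalue structure as in Lemma \ref{hesH} --- one simple eigenvalue proportional to $f''$ and the remaining ones proportional to $f'/\rho$ --- so the Pucci fundamental solution is again the power $\rho^{2-\alpha}$ with the identical $\alpha$ (these being the classical solutions of \cite{CLeoni} and \cite{CTchou}), only the value of $Q$ changing. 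The computational content is thus entirely packaged in Lemmas \ref{hesH}--\ref{fundsol}; I expect the only genuinely delicate point to be the conceptual one of feeding the operator's \emph{own} fundamental solution into Theorem \ref{main} as a Lyapunov function, together with the separate handling of the borderline case $\alpha=2$.
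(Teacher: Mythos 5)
Your argument is sound and follows essentially the same strategy as the paper's proof: everything reduces to exhibiting a radial exhaustion function that is a supersolution of $\mathcal{M}^+_{\lambda,\Lambda}((D^2_{\X}\,\cdot\,)^*)=0$ outside a compact set, and $Q\le \Lambda/\lambda+1$ is precisely the condition under which one exists. Two differences are worth recording. First, the paper takes $w=\log\rho$ in all cases and checks directly that $\mathcal{M}^+_{\lambda,\Lambda}((D^2_{\X}\log\rho)^*)=(\Lambda-\lambda(Q-1))|D_\X\rho|^2/\rho^2\ge 0$ exactly under the hypothesis; your $\varphi_1$ from Lemma \ref{fundsol} is an exact solution rather than a strict supersolution and coincides with $C_1\log\rho+C_2$ only at the borderline $\alpha=2$, but both are exhaustion functions precisely when $\alpha\le2$, so either choice works. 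Second --- and this is the one step you should make explicit rather than outsource --- the paper does \emph{not} simply invoke Theorem \ref{main}: it re-runs the Lyapunov argument ($v_\xi=u-\xi w$, comparison on annuli, strong maximum principle of \cite{BG}). The reason is that the reduction behind the last assertion of Theorem \ref{main} passes through the minimal-operator equation \eqref{P-}, whose natural Lyapunov function would be an exhaustion supersolution of $\mathcal{M}^-_{\lambda,\Lambda}=0$; your $\varphi_1$ satisfies $\mathcal{M}^-_{\lambda,\Lambda}((D^2_{\X}\varphi_1)^*)\le 0$ (strictly, off the characteristic set, when $\lambda<\Lambda$), and no radial exhaustion supersolution of $\mathcal{M}^-=0$ exists here --- which is exactly why this theorem improves Corollary \ref{quasi_Carnot}. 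What makes your pairing legitimate is that $u$ is a subsolution and $w$ a supersolution of the \emph{same} convex, positively homogeneous operator, so that $\mathcal{M}^-_{\lambda,\Lambda}((D^2_{\X}(u-\xi w))^*)\le\mathcal{M}^+_{\lambda,\Lambda}((D^2_{\X}u)^*)-\xi\,\mathcal{M}^+_{\lambda,\Lambda}((D^2_{\X}w)^*)\le 0$; include this line (and the subsequent annulus comparison plus strong maximum principle) instead of citing Theorem \ref{main} as a black box. A minor final point: in the Heisenberg case the positive eigenvalues of $(D^2_{\X}f(\rho))^*$ are not all equal (both $3f'/\rho$ and $f'/\rho$ occur), but they all carry the sign of $f'$ and sum to $(Q-1)f'|D_\X\rho|^2/\rho$, which is all the Pucci computation needs, so your conclusion there stands.
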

\begin{proof}
We outline the main steps 
in the case of the maximal operator, the other being similar. Let $\rho$ be the homogeneous norm associated to $\X$. Assume temporarily the existence of a function $w\in C^2(\R^d\backslash\{0\})$ such that  $\mathcal{M}^+_{\lambda,\Lambda}((D^2_{\X}w)^*) \geq 0$ for $\rho(x)>R$ and $\lim_{|x|\to\infty}w=+\infty$. 
For $\xi>0$ set $v_\xi(x)=u(x)-\xi w(x)$ for some $\bar R>R>0$. Then $v_\xi$ is continuous on $\{x\in\R^d:\rho(x)>\bar R\}$ and solves in the viscosity sense $\mathcal{M}^-_{\lambda,\Lambda}((D^2_{\X}v_\xi)^*)\leq 0$ for $\rho(x)>\bar R$ by the properties of Pucci's extremal operators. Indeed, 
if both $u,w$ are $C^2$, we have
\[
\mathcal{M}^-_{\lambda,\Lambda}((D^2_{\X}v_\xi)^*)\leq \mathcal{M}^+_{\lambda,\Lambda}((D^2_{\X}u)^*) +\mathcal{M}^-_{\lambda,\Lambda}(-\xi (D^2_{\X}w)^*)=\mathcal{M}^+_{\lambda,\Lambda}((D^2_{\X}u)^*) -\xi \mathcal{M}^+_{\lambda,\Lambda}((D^2_{\X}w)^*)\leq0 ,
\]
and the inequality is true in the viscosity sense by an argument in \cite[Theorem 2.1]{BG_lio1}.
Therefore,  arguing again as in \cite
{BG_lio1} 
by the weak comparison 
 principle on the annuli, one observes that $u$ attains its maximum at some point on $\partial B(0,\bar R)$. Then, one applies the strong maximum principle \cite{BG} to conclude that $u$ is constant.

It remains to verify the existence of a Lyapunov function $w$ for the equation. We take $w(x)=\log\rho(x)$ ($\rho(x) = |x|$ in the Euclidean case and $\rho = \rho_{\He}$ in the other cases) and claim that
\[
\mathcal{M}^+_{\lambda,\Lambda}((D^2_{\X}w)^*)= ({\Lambda}-\lambda(Q-1))
\frac{|D_\X\rho(x)|^2}{\rho(x)^2} .
\]
This is well known for the Euclidean and Heisenberg case, and it follows from Lemma \ref{pucciHtype} for the case of H-type groups. Then
\[
\mathcal{M}^+_{\lambda,\Lambda}((D^2_{\X}w)^*)
\geq0   \quad \text{for $\rho(x)>0$ }\iff Q\leq \frac{\Lambda}{\lambda}+1 \,.
\]
\end{proof}
\begin{rem} The condition on $Q$ in Theorem \ref{liohad} is sharp, as it can be seen immediately by constructing explicit counterexamples as in \cite{BG_lio1}.
Note that it is equivalent to the condition $\alpha
\leq 2$ and therefore to the unboudedness at infinity of the fundamental solution built in  Lemma \ref{fundsol}.
We stress again 
 that the one-side Liouville property for sub- and supersolutions to sub-Laplacians ($\lambda=\Lambda$) fails in any Carnot group of step-2.
 
  The proof of Theorem \ref{liohad} 
 in \cite{CTchou} used a Hadamard-type result in the Heisenberg group. Such a qualitative property can be obtained also for H-type groups, and Theorem \ref{liohad} can also be deduced  from it.
\end{rem}
We now consider equations \eqref{P-} and \eqref{P+} structured over the H-type vector fields, namely,
\begin{equation}\label{1H}
\mathcal{M}^-_{\lambda,\Lambda}((D^2_{\He}u)^*)+H_i(x,u,D_{\He}u)=0 \quad \text{ in }\R^7\ ,
\end{equation}
\begin{equation}\label{2H}
\mathcal{M}^+_{\lambda,\Lambda}((D^2_{\He}u)^*)+H_s(x,u,D_{\He}u)=0 \quad \text{ in }\R^7\ ,
\end{equation}
where $H_i, H_s$ are defined by \eqref{Hi} and \eqref{Hs}. In the rest of this section $\rho = \rho_{\He}$ defined by \eqref{rhoHtype}.
\begin{thm}\label{Cor1H}
Let $\mathcal{X}=\{X_1,....,X_{4}
\}$ be the system of vector fields generating the H-type group $\He\simeq\R^7$ associated to the matrices in \eqref{matH}. Assume that \eqref{b} and \eqref{c} hold and
\begin{equation}\label{condH}
\sup_{\alpha\in A}\{b^\alpha(x)\cdot \frac{\mu}{|x_H|^2}-c^\alpha(x)\frac{\rho^4}{|x_H|^2}\log\rho\}\leq \lambda-9\Lambda
\end{equation}
for $\rho$ sufficiently large and $|x_H|\neq0$, where $Q=10$ is the homogeneous dimension of $\mathbb{H}\simeq\R^7$, $b^\alpha(x)$ takes values in $\R^{4}$, and $\mu$ is defined in Lemma \ref{hesH}. 
\begin{itemize}
\item[(A)] If either $c^\alpha(x)\equiv0$ or $u\geq0$, then \eqref{LP1} holds for \eqref{1H} with $w(x)=\log\rho(x)$.
\item[(B)] If either $c^\alpha(x)\equiv0$ or $v\leq0$, then \eqref{LP2} holds for \eqref{2H}  with $W(x)=-\log\rho(x)$.
\end{itemize}
As a consequence, \eqref{LP1} and \eqref{LP2} hold for $G$ uniformly subelliptic under the assumptions \eqref{G>H} or \eqref{G<H} and $H_i,H_s$ as in \eqref{Hi} or \eqref{Hs}.
\end{thm}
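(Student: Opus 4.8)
The plan is to build the Lyapunov functions explicitly and then invoke Theorem \ref{main}. For part (A) I take $w(x)=\log\rho(x)$ and for part (B) the negative Lyapunov function $W(x)=-\log\rho(x)$; since $\rho=\Gamma^{1/(2-Q)}\to\infty$ as $|x|\to\infty$ by Proposition \ref{fund}, we have $w\to+\infty$ and $W\to-\infty$, so both are exhaustion functions of the correct sign. The whole argument then reduces to checking that $w$ is a viscosity supersolution of \eqref{1H}, respectively $W$ a viscosity subsolution of \eqref{2H}, outside a compact set; once this is done, parts (a) and (b) of Theorem \ref{main} (which demand $u\geq0$ or $c^\alpha\equiv0$, resp.\ $v\leq0$ or $c^\alpha\equiv0$) yield the conclusion.

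For (A) note that $f(\rho)=\log\rho$ is increasing and concave, so by Lemma \ref{hesH} the matrix $(D^2_{\He}w)^*$ has the positive eigenvalue $3\frac{f'(\rho)}{\rho}|D_\He\rho|^2$ with multiplicity $3$ and the simple negative eigenvalue $f''(\rho)|D_\He\rho|^2$. Inserting this into the definition of $\mathcal{M}^-_{\lambda,\Lambda}$ (weighting the positive part by $\Lambda$ and the negative part by $\lambda$) and using $f'=1/\rho$, $f''=-1/\rho^2$, $|D_\He\rho|^2=|x_H|^2/\rho^2$ from Lemma \ref{hesH}, I obtain
\[
\mathcal{M}^-_{\lambda,\Lambda}((D^2_{\He}w)^*)=\frac{|x_H|^2}{\rho^4}\bigl(\lambda-9\Lambda\bigr),\qquad 9=Q-1 .
\]
Since $D_\He w=D_\He\rho/\rho=\mu/\rho^4$, the supersolution inequality $\mathcal{M}^-_{\lambda,\Lambda}((D^2_{\He}w)^*)+H_i(x,w,D_\He w)\geq0$, after multiplication by $\rho^4/|x_H|^2>0$ and rewriting the infimum as minus a supremum, becomes exactly the hypothesis \eqref{condH}; this is where the explicit eigenvalue count is essential, and it is what sharpens Corollary \ref{quasi_Carnot} by separating the single negative eigenvalue from the three positive ones. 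On the characteristic locus $|x_H|=0$ (excluded from \eqref{condH}) one checks directly that $D_\He\rho=\mu/\rho^3=0$ and $(D^2_\He w)^*=0$, so the inequality collapses to $\inf_\alpha c^\alpha(x)\log\rho\geq0$, true for $\rho>1$ because $c^\alpha\geq0$; hence $w$ is a supersolution throughout $\{\rho>R\}$ for $R$ large.

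Part (B) is symmetric: $f(\rho)=-\log\rho$ is decreasing and convex, Lemma \ref{hesH} now yields three negative eigenvalues and one positive, and the identical bookkeeping gives $\mathcal{M}^+_{\lambda,\Lambda}((D^2_{\He}W)^*)=\frac{|x_H|^2}{\rho^4}(9\Lambda-\lambda)$, so the subsolution inequality $\mathcal{M}^+_{\lambda,\Lambda}((D^2_{\He}W)^*)+H_s(x,W,D_\He W)\leq0$ again reduces to \eqref{condH}, with the locus $|x_H|=0$ handled as above. Finally, the assertion for a general uniformly subelliptic $G$ follows by the standard reduction: setting $N=0$ in \eqref{unifsubell} gives $G(x,r,p,M)\geq\mathcal{M}^-_{\lambda,\Lambda}(M)+G(x,r,p,0)\geq\mathcal{M}^-_{\lambda,\Lambda}(M)+H_i(x,r,p)$ under \eqref{G>H}, so every subsolution of $G=0$ is a subsolution of \eqref{1H} and part (A) applies; dually \eqref{G<H} makes every supersolution of $G=0$ a supersolution of \eqref{2H}, whence part (B) applies.

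I expect the main obstacle to be the computation in the second paragraph: pinning down the exact sign pattern of the eigenvalues of $(D^2_\He w)^*$ so that $\mathcal{M}^-$ (resp.\ $\mathcal{M}^+$) produces the sharp constant $\lambda-\Lambda(Q-1)$, and separately verifying the supersolution/subsolution property on the degenerate set $\{|x_H|=0\}$, where the operator collapses and \eqref{condH} gives no information. Everything else is the routine translation between the (super)solution inequality and \eqref{condH} together with the reductions already used in Corollary \ref{quasi_Carnot}.
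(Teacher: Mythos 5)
Your proof is correct and follows essentially the same route as the paper: take $w=\log\rho$ and $W=-\log\rho$, compute the Pucci extremal operators of the symmetrized horizontal Hessian through the eigenvalue decomposition of Lemma \ref{hesH} (which the paper packages as Lemma \ref{pucciHtype}), reduce the supersolution/subsolution inequality to \eqref{condH} after multiplying by $\rho^4/|x_H|^2$, and invoke Theorem \ref{main} together with the standard reduction from \eqref{unifsubell} and \eqref{G>H}--\eqref{G<H}. Your explicit verification on the characteristic set $\{|x_H|=0\}$, where $D_\He\rho$ and $(D^2_\He w)^*$ vanish and the inequality collapses to $\inf_\alpha c^\alpha(x)\log\rho\geq 0$, is a detail the paper leaves implicit and is a welcome addition.
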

\begin{proof}
We only have to check the existence of a Lyapunov function and apply Theorem \ref{main}.
Take
 $w(x)=\log\rho(x)$. Note that $\lim_{|x|\to\infty}w
 (x)=\infty$ because 
 $\rho\to\infty$ as $|x|\to\infty$.
By Lemma \ref{pucciHtype} we can compute the Pucci minimal operator 
for  $\rho(x)>0$ 
and get
\begin{equation*}
\mathcal{M}^-_{\lambda,\Lambda}((D^2_{\He}w)^*)=\{-\Lambda(Q-1)+\lambda\}\frac{|D_\He\rho|^2}{\rho^2}\ .
\end{equation*}
Thus, $w$ is a supersolution at all points where
\begin{equation*}
\{-\Lambda(Q-1)+\lambda\}\frac{|x_H|^2}{\rho^4}+\inf_{\alpha\in A}\{c^\alpha(x)\log\rho-b^\alpha(x)\cdot \frac{\mu}{\rho^4}\}\geq0\ ,
\end{equation*}
{because $|D_\He\rho|^2=|x_H|^2/\rho^2$ and $D_{\He}w= \mu/\rho^4$ by Lemma \ref{hesH}.}  
In particular, this inequality holds when $\rho$ is sufficiently large under condition \eqref{condH} by recalling that $Q=10$. Similarly, one can check that \eqref{condH} implies that the function $W(\rho)=-\log\rho$ is a subsolution to \eqref{2H} for $\rho(x)>0$. 
Therefore Theorem \ref{main} gives the conclusion.
\end{proof}
\begin{rem}
It is easy to give simpler sufficient conditions on the coefficients so that condition \eqref{condH} holds. For instance,
\[
\limsup_{|x|\to\infty}\sup _{\alpha\in A} b^\alpha(x)\cdot \frac{\mu}{|x_H|^2} < \lambda-9\Lambda \, ,
\]
since $c^\alpha\geq 0$, or, uniformly in $\alpha$,
\[
c^\alpha(x) \geq c_o>0 \,, \quad |b^\alpha(x)| = o\left(
{\rho^2 \log \rho}/|x_H|
\right) \, ,
\]
because $|\mu|=\rho^2 |x_H|$  (the condition on $b^\alpha$ is satisfied, for instance, if they are uniformly bounded), guarantee the validity of \eqref{condH}. 
\end{rem}
\begin{rem}\label{gen}
The Liouville properties obtained above for the matrices \eqref{matH} can be extended to 
general H-type groups under similar conditions by means of the expression of the symmetrized horizontal Hessian of a rescaled  homogeneous norm 
 given in \cite[p. 468]{Tralli}. Moreover, the same computations would lead to new results even for linear equations driven by $-\mathrm{Tr}(A(x)(D^2_\He u)^*)$, where $A$ is uniformly positive definite with eigenvalues between two given constants $\lambda<\Lambda$ via \cite[Theorem 2.1]{BG_lio1}. In both cases, to prove \eqref{LP1} one can take the Lyapunov functions $w_1(x)=\log\rho$ or $w_2(x)=\rho^2$ {(cf. \cite[Remark 4.6]{BG_lio1})} and get sufficient conditions involving the dimension of the underlying structure.
\end{rem}
\begin{rem}
The previous sufficient condition is comparable with the one obtained in \cite{BC} for the Euclidean vector fields and in \cite{BG_lio1} for the Heisenberg vector fields, since the constant on the right-hand side in \eqref{condH} can be written as
$
\lambda-9\Lambda=\lambda-\Lambda(Q-1)
$.
It can be improved for subsolutions (resp. supersolutions) when $\mathcal{M}^-_{\lambda,\Lambda}$ (resp. $\mathcal{M}^+_{\lambda,\Lambda}$) is replaced with 
 $\mathcal{M}^+_{\lambda,\Lambda}$ (resp. $\mathcal{M}^-_{\lambda,\Lambda}$), as we did in Theorem \ref{liohad}. {This is consistent with non-existence results already found for different equations, see, e.g., \cite[Remark 2.6]{CirantGoffi} and the references therein}.
For instance,  consider a subsolution $u$ to the fully nonlinear equation perturbed by linear terms 
\begin{equation}\label{3H}
\mathcal{M}^+_{\lambda,\Lambda}((D^2_{\He}u)^*)+b(x)\cdot D_{\He} u+c(x)u=0 \quad \text{ in }\R^7\ ,
\end{equation}
where $\mathcal{X}$ are either the Heisenberg vector fields or 
those generating the H-type group $\He\simeq\R^7$ associated to the matrices in \eqref{matH}. Assume 
\eqref{b}, \eqref{c},  and
\begin{equation}\label{condHimp}
\limsup_{|x|\to\infty}\left\{b(x)\cdot \frac{\mu}{|x_H|^2}-c(x)\frac{\rho^4}{|x_H|^2}\log\rho(x) \right\}\leq \Lambda-\lambda(Q-1)
\end{equation}
for $\rho$ sufficiently large, 
 where $Q$ is the homogeneous dimension of the structure, and $\mu$ is defined in Lemma \ref{hesH} for $H$-type groups or in \cite[eq. (37)]{BG_lio1} for the Heisenberg group. If $\limsup_{|x|\to\infty}
 \frac{u(x)}{\log\rho} \leq 0$ and either $c(x)\equiv0$ or $u\geq0$, then $u$ is constant. The proof is the same as that of Theorem \ref{liohad}.
\end{rem}

We can deduce from Theorem \ref{Cor1H} and Corollary \ref{lcpl} the following Liouville comparison principle.
\begin{cor}
[Liouville comparison principle] 
\label{comparisonH}
Let $u, v$ be a sub- and a supersolution 
to \eqref{1H} (resp., \eqref{2H}) such that $\limsup_{|x|\to\infty}\frac{u(x) - v(x)}{\log\rho(x)}\leq 0$
 and assume $c^\alpha\equiv0$ and \eqref{condH} holds. Then $u\equiv v$ up to constants in $\R^d$.
\end{cor}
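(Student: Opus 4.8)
The plan is to deduce Corollary \ref{comparisonH} directly by combining the Lyapunov function just constructed in Theorem \ref{Cor1H} with the abstract Liouville comparison principle of Corollary \ref{lcpl}. First I would invoke Theorem \ref{Cor1H}, noting that under the standing hypotheses---$c^\alpha\equiv 0$ together with the structural condition \eqref{condH}---the function $w(x)=\log\rho(x)$ is an exhaustion function (since $\rho\to\infty$ as $|x|\to\infty$) that is a viscosity supersolution of \eqref{1H} outside a compact set. In other words, $w$ is a genuine Lyapunov function for \eqref{1H} in the sense required by the abstract machinery. Symmetrically, the same computation shows that $W(x)=-\log\rho(x)$ is a negative Lyapunov function for \eqref{2H}.

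Next I would simply apply Corollary \ref{lcpl} with this specific $w$ (resp.\ $W$). The hypothesis $c^\alpha\equiv 0$ is exactly what Corollary \ref{lcpl} demands, and the growth assumption $\limsup_{|x|\to\infty}\frac{u(x)-v(x)}{\log\rho(x)}\leq 0$ is precisely the condition $\limsup_{|x|\to\infty}\frac{u-v}{w}\leq 0$ appearing there, since $w=\log\rho$. Corollary \ref{lcpl} then asserts that $u-v$ is constant, i.e.\ $u\equiv v$ up to an additive constant on $\R^d=\R^7$. For the supersolution case \eqref{2H} one uses $W=-\log\rho$ and the corresponding $\liminf$ formulation, which is again the stated hypothesis rewritten.

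The proof is therefore essentially a bookkeeping matter of checking that the objects constructed in Theorem \ref{Cor1H} meet the exact definitions required by Corollary \ref{lcpl}. The only substantive point---already carried out inside the proof of Theorem \ref{Cor1H}---is the verification, via Lemma \ref{pucciHtype} and Lemma \ref{hesH}, that $\log\rho$ (resp.\ $-\log\rho$) is a super- (resp.\ sub-) solution outside a compact set under \eqref{condH}; I would not repeat this computation but rather cite Theorem \ref{Cor1H}. I expect no genuine obstacle here: the statement is a corollary in the literal sense, and the main care needed is ensuring the translation between the growth hypotheses written with $\log\rho$ and those written with $w$ is stated cleanly.

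\begin{proof}
By Theorem \ref{Cor1H}, under the hypothesis \eqref{condH} the function $w(x)=\log\rho(x)$ satisfies $\lim_{|x|\to\infty}w(x)=+\infty$ and is a viscosity supersolution of \eqref{1H} outside a compact set, hence it is a Lyapunov function for \eqref{1H}. Since we assume $c^\alpha\equiv 0$ and $\limsup_{|x|\to\infty}\frac{u(x)-v(x)}{\log\rho(x)}\leq 0$, all the assumptions of Corollary \ref{lcpl} are met with this $w$. Therefore $u\equiv v$ in $\R^d$ up to a constant. For the equation \eqref{2H}, the same argument applies with the negative Lyapunov function $W(x)=-\log\rho(x)$, which is a subsolution of \eqref{2H} outside a compact set by Theorem \ref{Cor1H}(B), and Corollary \ref{lcpl} again yields that $u$ and $v$ coincide up to a constant.
\end{proof}
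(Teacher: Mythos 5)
Your proposal is correct and follows essentially the same route as the paper: the paper's own proof likewise reads off from the proof of Theorem \ref{Cor1H} that $w=\log\rho$ is a Lyapunov function (and $W=-\log\rho$ a negative one) under \eqref{condH}, and then concludes via the difference argument packaged in Corollary \ref{lcpl}. The only cosmetic difference is that the paper briefly restates the key ingredient of Corollary \ref{lcpl} (that $u-v$ is a subsolution and $v-u$ a supersolution) instead of citing that corollary wholesale, which changes nothing of substance.
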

\begin{proof}
By \eqref{condH} and the proof of Thm.  \ref{Cor1H}, $w=\log\rho$ is a Lyapunov function for  \eqref{1H} and $W=-w$ is a negative Lyapunov function, $u-v$ is a subsolution and $v-u$ a supersolution of \eqref{1H}. Then one can conclude by the Liouville properties of Thm.  \ref{Cor1H}.
\end{proof}

\subsection{Equations with horizontal Hessian and Euclidean gradient}
\label{subsec_mix} 
In 
 this section, {following \cite{BG_lio1}}, we  consider equations that involve the H-Hessian $(D_{
\He}^2u)^*$ and the 
 Euclidean gradient $Du$, namely of the form
\begin{equation}\label{non-h-grad-2}
G(x,u, Du, (D_{
\He}^2u)^*)=0\text{ in }\R^d=\R^{m+n}\ ,
\end{equation}
with $G : \R^{m+n}\times\R\times\R^{m+n}\times\mathcal S_{m} \to\R$.
We impose $G$ uniformly subelliptic and its first order part  bounded from below by a concave Hamiltonian $H_i$, or from above by a convex one $H_s$, which are allowed to depend on 
 the full Euclidean gradient. More precisely,
\begin{equation}\label{G>Hp}
{ G(x,r,p,0)}\geq H_i(x,r,p) ,  \quad \forall \, x, p\in \R^d , r\in\R ,
\end{equation}
for a concave Hamiltonian of the form \eqref{Hi}, but with  $b^\alpha : \R^d \to \R^d$ a vector field in $\R^d$,  or 
\begin{equation}\label{G<Hp}
{ G(x,r,p,0)}\leq H_s(x,r,p)     ,  \quad \forall \, x, p\in \R^d , r\in\R ,
\end{equation}
for a convex Hamiltonian of the form \eqref{Hs} with  $b^\alpha : \R^d \to \R^d$. On $b^\alpha$ and $c^\alpha$ we make the same assumptions \eqref{b}, \eqref{c}. 

The next result provides an explicit sufficient condition on H-type groups associated to the matrices \eqref{matH}. 
It states that the velocity fields $b^\al$ in the drift part of the Hamiltonian terms $H_i$ and $H_s$ point toward the origin for $|x|$ sufficiently large, and it is expressed in terms of the homogeneous norm $\rho$ of the H-type group defined by \eqref{rhoHtype}.%
\begin{cor}\label{cor_euc1}
Assume that the operator $G$ { satisfies \eqref{unifsubell}, 
where
$\X=\{X_1,...,X_{4}\}$}  are the H-type vector fields in $\R^7$ given by \eqref{fieldsH},  
and \eqref{b} and  \eqref{c} hold.  
Suppose there exist $\g_1,\dots, \g_{m+n}\in\R$ with $\min_i\g_i=\g_o>0$ and such that 
\begin{equation}\label{OUtype}
\sup_\al b^\al(x)\cdot D\rho(x) \leq -\sum_{i=1}^{m+n}\g_ix_i\partial_i\rho + o\left(\frac1{\rho^3} \right)\qquad\text{ as }\rho\to \infty.
\end{equation}
\noindent {\upshape(A)} Assume 
 \eqref{G>Hp}.
 If either $c^\alpha(x)\equiv0$ or $u\geq0$, then \eqref{LP1} holds for \eqref{G>Hp} with $w(x)= \log\rho(x)$.

\noindent {\upshape(B)} Assume 
 \eqref{G<Hp}.
 If either $c^\alpha(x)\equiv0$ or $v\leq0$, then \eqref{LP2} holds  for \eqref{G<Hp} with $W(x)=-\log\rho(x)$.
\end{cor}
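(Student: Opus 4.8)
The plan is to run the argument of Theorem \ref{Cor1H} almost verbatim; the only genuinely new point is that the drift is now contracted with the full Euclidean gradient, so the Lyapunov inequality must be read off from the Euler identity for the anisotropically homogeneous norm $\rho$ rather than from $D_\He\rho$ alone. I would treat case (A) in detail, case (B) being symmetric upon replacing $\mathcal{M}^-_{\lambda,\Lambda}$, $H_i$ and $w=\log\rho$ by $\mathcal{M}^+_{\lambda,\Lambda}$, $H_s$ and $W=-\log\rho$.

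First I would reduce the statement to checking that $w=\log\rho$ is a Lyapunov function, i.e. a supersolution of the equation outside a large ball. Taking $N=0$ in the uniform subellipticity \eqref{unifsubell} and using \eqref{G>Hp}, any smooth test satisfies
\[
G(x,w,Dw,(D^2_\He w)^*)\geq \mathcal{M}^-_{\lambda,\Lambda}((D^2_\He w)^*)+H_i(x,w,Dw),
\]
so it suffices to make the right-hand side nonnegative for $\rho$ large. Because the dependence is on the full gradient $Du$, the passage from this Lyapunov function to \eqref{LP1} is then exactly the one in \cite[Section 4.3]{BG_lio1}: the strong maximum principle of \cite{BG} and the comparison on annuli still apply, since only the second-order part involves the horizontal structure while the Euclidean drift is merely Lipschitz by \eqref{b}.

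Next I would estimate the two terms. As in the proof of Theorem \ref{Cor1H} one has $\mathcal{M}^-_{\lambda,\Lambda}((D^2_\He w)^*)=(\lambda-\Lambda(Q-1))|D_\He\rho|^2/\rho^2$, which is $O(\rho^{-2})$ and vanishes at infinity. For the first-order term I would use $Dw=D\rho/\rho$ with the explicit components $\partial_{x_j}\rho=|x_H|^2x_j/\rho^3$ and $\partial_{t_s}\rho=t_s/(2\rho^3)$; these make every summand $x_i\partial_i\rho$ nonnegative, and the degree-one homogeneity of $\rho$ under $\delta_\lambda$ yields the weighted Euler identity $\sum_j x_j\partial_{x_j}\rho+2\sum_s t_s\partial_{t_s}\rho=\rho$, whence $\sum_i x_i\partial_i\rho=\rho-|x_V|^2/(2\rho^3)\geq \rho/2$ thanks to $|x_V|^2\leq\rho^4$. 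Combining \eqref{OUtype}, the sign of each $x_i\partial_i\rho$ and $\min_i\g_i=\g_o$, for $c^\alpha\equiv0$ I would get
\[
\inf_{\alpha}\{-b^\alpha\cdot Dw\}\geq \frac1\rho\sum_i\g_i x_i\partial_i\rho-o(\rho^{-4})\geq \frac{\g_o}{\rho}\sum_i x_i\partial_i\rho-o(\rho^{-4})\geq \frac{\g_o}{2}-o(\rho^{-4}),
\]
where the $o(\rho^{-3})$ of \eqref{OUtype} becomes $o(\rho^{-4})$ after division by $\rho$. When $u\geq0$ I would instead keep $c^\alpha\geq0$ and discard the harmless term $c^\alpha\log\rho\geq0$ (for $\rho\geq1$), preserving the same lower bound.

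Putting the pieces together, the right-hand side of the displayed inequality is bounded below by $(\lambda-\Lambda(Q-1))|D_\He\rho|^2/\rho^2+\g_o/2-o(\rho^{-4})$, which tends to $\g_o/2>0$ as $\rho\to\infty$ and is therefore nonnegative for $\rho$ large; hence $w=\log\rho$ is a Lyapunov function and Theorem \ref{main}(a) gives \eqref{LP1}. I expect the only real obstacle to be this first-order estimate: one must use that $\rho$ is homogeneous with respect to the \emph{anisotropic} dilations (so the vertical variables carry weight $2$) to obtain $\sum_i x_i\partial_i\rho\geq\rho/2$, and the factorization by $\g_o$ is legitimate precisely because each summand $x_i\partial_i\rho$ is nonnegative; the calibration of the error rate in \eqref{OUtype} to $o(\rho^{-3})$ is exactly what renders it negligible against the constant $\g_o/2$ after dividing by $\rho$.
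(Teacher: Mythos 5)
Your proof is correct and follows the paper's strategy in all essentials: the same reduction via \eqref{unifsubell} and \eqref{G>Hp} to checking that $w=\log\rho$ is a supersolution of $\mathcal{M}^-_{\lambda,\Lambda}((D^2_\He w)^*)+H_i(x,w,Dw)=0$ outside a large ball, the same explicit components of $D\rho$, and the same appeal to Theorem \ref{main}. Where you genuinely diverge is in how the final inequality is closed. The paper keeps the two contributions entangled and argues by dichotomy: writing $C_1=9\Lambda-\lambda$, it compares $\g_o|x_H|^4+\tfrac{\g_o}{2}|x_V|^2$ against $C_1|x_H|^2$ separately in the regimes $|x_H|^2>C_1/\g_o$ and $|x_H|^2\le C_1/\g_o$ (where $|x_V|$ must then be large). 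You instead invoke the anisotropic Euler identity to get the uniform bound $\sum_i x_i\partial_i\rho=\rho-|x_V|^2/(2\rho^3)\ge\rho/2$, so that after dividing by $\rho$ the drift term is bounded below by the constant $\g_o/2$ while the Pucci term is $O(\rho^{-2})$; the conclusion is then immediate with no case analysis. Your computation checks out ($\sum_i\g_i x_i\partial_i\rho\ge\g_o\sum_i x_i\partial_i\rho$ is legitimate since each summand is nonnegative, and $|x_V|^2\le\rho^4$ gives the factor $1/2$), and it is arguably the cleaner route: it isolates why the hypothesis $\min_i\g_i=\g_o>0$ and the error calibration $o(\rho^{-3})$ in \eqref{OUtype} are exactly what is needed, and it would transfer more readily to other homogeneous structures where a case split on $|x_H|$ is less natural.
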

\begin{proof}
We only need to check that $w=\log\rho$ is a supersolution to
\[
\mathcal{M}^-_{\lambda,\Lambda}((D^2_{\He}u)^*)+H_i(x,u,Du)=0\ .
\]
Let $C_1:= 9\Lambda-\lambda > 0$. As in the proof of Theorem \ref{Cor1H}, $w$ is a supersolution at all points where
\begin{equation}\label{tobecheck}
-C_1\frac{|x_H|^2}{\rho^4}+\inf_{\alpha\in A}\left\{c^\alpha(x)\log\rho-b^\alpha(x)\cdot\frac{D\rho}{\rho}\right\}\geq0\ .
\end{equation}
Since $D\rho = (2|x_H|^2 x_H, x_V)/(2\rho^3)\in\R^{m+n}$, we deduce from \eqref{OUtype}  that the left-hand side of the above inequality is greater than 
\begin{multline*}
-C_1\frac{|x_H|^2}{\rho^4}+\frac 1{2\rho^4}\left( 2\sum_{i=1}^{m}\g_ix_i^2|x_H|^2+ \sum_{j=1}^{n}\g_{j}x_{j}^2 + o(1)\right)\geq
\\
\frac 1{\rho^4}\left( |x_H|^2 (\g_o|x_H|^2 - C_1) +\frac{\g_o}2 |x_{V}|^2+ o(1)\right) \geq 0 \,,
\end{multline*}
for $\rho$ large enough, by taking either $|x_H|^2 > C_1/\g_o$, or $|x_H|^2 \leq C_1/\g_o$ and $|x_{V}|^2>2C_1^2/\g_o^2$. 
\end{proof}
As a byproduct, we conclude with a result based on a positivity condition of the zero-th order coefficients $c^\al$ at infinity similar to {\cite[Example 4.9 and Corollary 4.10]{BG_lio1}. We skip the proof since it is the same of \cite[Corollary 4.10]{BG_lio1}}.
\begin{cor}\label{cor_euc2}
In the assumptions of Corollary \ref{cor_euc1} replace \eqref{OUtype} with 
\begin{equation}\label{c_a}
\liminf_{|x|\to\infty} \inf_{\alpha\in A}c^\alpha(x)
\log\rho(x)>0 
 \,,
\end{equation}
and either
\begin{equation}\label{sign}
\limsup_{|x|\to\infty} \sup_{\alpha\in A}b^\alpha(x)\cdot D\rho(x)\leq 0 \,,
\end{equation}
or 
\begin{equation}\label{order}
 \sup_{\alpha\in A}|b^\alpha(x)|  = o(\rho) \qquad\text{ as }\rho\to \infty.
\end{equation}
Then the conclusions of Corollary \ref{cor_euc1} hold.
\end{cor}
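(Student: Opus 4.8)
The plan is to reduce the statement to the single computation already performed in the proof of Corollary~\ref{cor_euc1}. There it is shown that $w(x)=\log\rho(x)$ is a supersolution of $\mathcal{M}^-_{\lambda,\Lambda}((D^2_{\He}u)^*)+H_i(x,u,Du)=0$ at every point where inequality \eqref{tobecheck} holds, i.e.
\[
-C_1\frac{|x_H|^2}{\rho^4}+\inf_{\alpha\in A}\left\{c^\alpha(x)\log\rho-b^\alpha(x)\cdot\frac{D\rho}{\rho}\right\}\geq 0,\qquad C_1:=9\Lambda-\lambda>0.
\]
Hence the only thing to establish is that \eqref{tobecheck} holds for $\rho$ large under \eqref{c_a} together with either \eqref{sign} or \eqref{order}; once $w$ is thus a Lyapunov function, Theorem~\ref{main}(a) yields \eqref{LP1}, and the symmetric choice $W=-\log\rho$ with $\mathcal{M}^+$ and $H_s$ gives \eqref{LP2} in case (B).

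First I would record two elementary facts. Since $|x_H|^2\leq\rho^2$, the leading term obeys $C_1|x_H|^2/\rho^4\leq C_1/\rho^2\to 0$ as $\rho\to\infty$. Moreover, from $D\rho=(2|x_H|^2x_H,x_V)/(2\rho^3)$ one computes $|D\rho|^2=|x_H|^6/\rho^6+|x_V|^2/(4\rho^6)\leq 1+1/(4\rho^2)$, using $|x_H|^2\leq\rho^2$ and $|x_V|^2\leq\rho^4$; thus the Euclidean gradient $|D\rho|$ remains bounded at infinity. Next I would bound the infimum from below by splitting over the control,
\[
\inf_{\alpha}\left\{c^\alpha\log\rho-b^\alpha\cdot\frac{D\rho}{\rho}\right\}\geq\inf_{\alpha}\big(c^\alpha\log\rho\big)-\frac1\rho\sup_{\alpha}\big(b^\alpha\cdot D\rho\big),
\]
and invoke \eqref{c_a} to fix $\delta>0$ and $R$ with $\inf_\alpha c^\alpha(x)\log\rho(x)\geq\delta$ for $\rho\geq R$.

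It then remains to show the drift term is negligible. Under \eqref{sign}, for $\rho$ large $\sup_\alpha b^\alpha\cdot D\rho\leq 1$, so $\frac1\rho\sup_\alpha(b^\alpha\cdot D\rho)\leq 1/\rho\to 0$; under \eqref{order}, the boundedness of $|D\rho|$ gives $\big|\frac1\rho\sup_\alpha(b^\alpha\cdot D\rho)\big|\leq\frac{\sup_\alpha|b^\alpha|\,|D\rho|}{\rho}=o(1)$. In either case the bracket is at least $\delta-o(1)\geq\delta/2>0$ for $\rho$ large, which dominates the vanishing quantity $C_1|x_H|^2/\rho^4$; hence \eqref{tobecheck} holds and the proof closes as above.

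The only genuine point, and the expected main obstacle, is to guarantee that the strictly positive lower bound $\delta$ furnished by the zeroth-order hypothesis \eqref{c_a} survives the subtraction of the first-order contribution. Under the sign condition \eqref{sign} this is automatic, whereas under the growth condition \eqref{order} it rests on pairing $\sup_\alpha|b^\alpha|=o(\rho)$ with the boundedness of $|D\rho|$ at infinity — a fact that is not a priori obvious, since $D\rho$ is the full Euclidean gradient and is not homogeneous of degree $0$ with respect to $\delta_\lambda$. Everything else is the verification already carried out in Corollary~\ref{cor_euc1} combined with the abstract Liouville result of Theorem~\ref{main}.
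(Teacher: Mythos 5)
Your proof is correct and takes the route the paper intends: the paper omits the argument (deferring to Corollary 4.10 of \cite{BG_lio1}), but the verification is exactly the one you give, namely that under \eqref{c_a} together with either \eqref{sign} or \eqref{order} the inequality \eqref{tobecheck} from the proof of Corollary \ref{cor_euc1} still holds for $\rho$ large, so $w=\log\rho$ (resp.\ $W=-\log\rho$) remains a Lyapunov function and Theorem \ref{main} applies. Your explicit check that $|D\rho|$ stays bounded at infinity is indeed the one nontrivial ingredient making \eqref{order} sufficient, and it is consistent with the remark following the statement that \eqref{order} can be weakened to $o(\rho\log\rho)$ when $c^\alpha\geq c_o>0$.
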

\begin{rem}
Corollary \ref{cor_euc1} generalizes to fully nonlinear equations over H-type vector fields the Liouville properties proved 
in the companion paper \cite{BG_lio1} for the Heisenberg group. 

The condition \eqref{c_a} in Corollary \ref{cor_euc2} obviously holds if $c^\alpha(x)\geq c_o>0$ for $|x|$ large enough, and in such case the condition \eqref{order} can be weakened to $\sup_{\alpha\in A}|b^\alpha(x)|  = o(\rho\log \rho)$.
\end{rem}


\subsection{Liouville properties on free step-2 Carnot groups}\label{sec_free} 

Here we consider the Carnot groups $\mathbb{F}_r$ with $r$ parameters introduced in Example \ref{free} of Section \ref{Carnot} with $\rho_\mathbb{F}$ defined by \eqref{rf}. 
\begin{lemma}
\label{hessianfree}
The gauge $\rho_\mathbb{F}$ in $\mathbb{F}_r$ satisfies
\[
(D^2_{\mathbb{F}_r}\rho_\mathbb{F})^*=\frac{3|x_H|^2}{\rho_\mathbb{F}^4}I_r-\frac{3}{\rho_\mathbb{F}^2}D_{\mathbb{F}_r}\rho_\mathbb{F}\otimes D_{\mathbb{F}_r}\rho_\mathbb{F}\ .
\]
In particular, for a radial function $f=f(\rho_\mathbb{F})$ we have
\[
(D^2_{\mathbb{F}_r}f(\rho_\mathbb{F}))^*=f'(\rho_\mathbb{F})\frac{3|x_H|^2}{\rho_\mathbb{F}^4}I_r+\left(f^{''}(\rho)-\frac{3f'(\rho_\mathbb{F})}{\rho_\mathbb{F}^2}\right) D_{\mathbb{F}_r}
\rho_\mathbb{F}\otimes D_{\mathbb{F}_r}\rho_\mathbb{F}
\]
and
\[
\Delta_{\mathbb{F}_r}f(\rho_\mathbb{F})=f'(\rho_\mathbb{F})\frac{3|x_H|^2}{\rho_\mathbb{F}^4}r+\left(f^{''}(\rho_\mathbb{F})-\frac{3f'(\rho_\mathbb{F})}{\rho_\mathbb{F}^2}\right)|D_{\mathbb{F}_r}\rho_\mathbb{F}|^2 .
\]
\end{lemma}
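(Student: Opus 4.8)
The plan is to follow the template of Lemma \ref{hesH} for the H-type group: compute the horizontal derivatives of the gauge, differentiate a second time, organize the resulting symmetric matrix into a multiple of $I_r$ plus a rank-one multiple of $D_{\mathbb{F}_r}\rho_\mathbb{F}\otimes D_{\mathbb{F}_r}\rho_\mathbb{F}$, and then read off the radial and sub-Laplacian formulas by the chain rule and by taking traces. For the first step I would write $\rho_\mathbb{F}^4=|x_H|^4+|x_V|^2$ and apply $X_k$: since the vertical derivatives $\partial_{jk},\partial_{kj}$ entering $X_k$ act only on $|x_V|^2$, one gets $4\rho_\mathbb{F}^3\,X_k\rho_\mathbb{F}=4x_k|x_H|^2+X_k|x_V|^2$, and evaluating $X_k|x_V|^2$ on the coordinates $t_{jk},t_{kj}$ yields $X_k\rho_\mathbb{F}=\mu_k\rho_\mathbb{F}^{-3}$ with $\mu_k=x_k|x_H|^2+\sum_{j>k}x_jt_{jk}-\sum_{j<k}x_jt_{kj}$. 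Unlike in the H-type case I would not try to simplify $|D_{\mathbb{F}_r}\rho_\mathbb{F}|^2=\sum_k\mu_k^2\,\rho_\mathbb{F}^{-6}$: for $r>2$ it does not collapse to $|x_H|^2/\rho_\mathbb{F}^2$, and it is only needed as an unexpanded factor in the last displayed formula.

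Differentiating once more, from $X_j\rho_\mathbb{F}=\mu_j\rho_\mathbb{F}^{-3}$ I obtain $X_iX_j\rho_\mathbb{F}=\rho_\mathbb{F}^{-3}X_i\mu_j-3\mu_i\mu_j\rho_\mathbb{F}^{-7}$; after symmetrizing, the term $-3\mu_i\mu_j\rho_\mathbb{F}^{-7}$ is a scalar multiple of $(D_{\mathbb{F}_r}\rho_\mathbb{F}\otimes D_{\mathbb{F}_r}\rho_\mathbb{F})_{ij}$ because $(D_{\mathbb{F}_r}\rho_\mathbb{F})_k=\mu_k\rho_\mathbb{F}^{-3}$, and this produces the rank-one part of the claimed formula. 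Everything then reduces to the symmetric combination $X_i\mu_j+X_j\mu_i$, and computing it is the step I expect to be the main obstacle. Splitting $\mu_j=x_j|x_H|^2+b_j$ with $b_j=\sum_{l>j}x_lt_{lj}-\sum_{l<j}x_lt_{jl}$, the purely horizontal part is immediate and contributes $\delta_{ij}|x_H|^2+2x_ix_j$ to $X_i\mu_j$; the delicate part is $X_ib_j$, where both $\partial_i$ and the vertical derivatives in $X_i$ hit $b_j$. Here one must track the action on each $t_{lj}$ and $t_{jl}$ and exploit the skew-symmetric placement of $\pm1$ in the structure matrices (equivalently the relations $[X_k,X_j]=4X_{kj}$) to collect terms. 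I expect the antisymmetric contributions coming from $\partial_ib_j$ to cancel upon symmetrizing in $i,j$, and the quadratic terms $x_ix_j$ produced by the vertical derivatives in $X_ib_j$ to cancel exactly against the $2x_ix_j$ from the horizontal part, leaving $X_i\mu_j+X_j\mu_i$ equal to a multiple of $|x_H|^2\delta_{ij}$. This cancellation is the whole content of the first identity: it is what makes $(D^2_{\mathbb{F}_r}\rho_\mathbb{F})^*$ isotropic on the orthogonal complement of $D_{\mathbb{F}_r}\rho_\mathbb{F}$, even though $|D_{\mathbb{F}_r}\rho_\mathbb{F}|^2$ is not.

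The two remaining identities are then routine. For a radial $f=f(\rho_\mathbb{F})$ the chain rule gives $(D^2_{\mathbb{F}_r}f(\rho_\mathbb{F}))^*=f'(\rho_\mathbb{F})\,(D^2_{\mathbb{F}_r}\rho_\mathbb{F})^*+f''(\rho_\mathbb{F})\,D_{\mathbb{F}_r}\rho_\mathbb{F}\otimes D_{\mathbb{F}_r}\rho_\mathbb{F}$; substituting the first identity and grouping the two $D_{\mathbb{F}_r}\rho_\mathbb{F}\otimes D_{\mathbb{F}_r}\rho_\mathbb{F}$ contributions yields the second displayed formula. Finally, taking the trace and using $\mathrm{Tr}(I_r)=r$ together with $\mathrm{Tr}(D_{\mathbb{F}_r}\rho_\mathbb{F}\otimes D_{\mathbb{F}_r}\rho_\mathbb{F})=|D_{\mathbb{F}_r}\rho_\mathbb{F}|^2$ gives the sub-Laplacian formula, with $|D_{\mathbb{F}_r}\rho_\mathbb{F}|^2$ left unexpanded.
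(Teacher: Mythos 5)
Your plan is correct and is essentially the paper's own proof: compute $X_k\rho_\mathbb{F}=\mu_k\rho_\mathbb{F}^{-3}$ with $\mu_k=x_k|x_H|^2+\sum_{j>k}x_jt_{jk}-\sum_{j<k}x_jt_{kj}$, differentiate again, observe that the symmetrized combination $X_i\mu_j+X_j\mu_i$ collapses to $3|x_H|^2\delta_{ij}$ (the paper isolates the surviving antisymmetric part as an explicit skew matrix $T$ that dies under symmetrization), and finish by the chain rule and the trace; your cancellations are exactly the ones that occur, and you are also right that $|D_{\mathbb{F}_r}\rho_\mathbb{F}|^2$ does not reduce to $|x_H|^2/\rho_\mathbb{F}^2$ for $r>2$. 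One point to make explicit rather than gloss over: your (correct) computation yields $(D^2_{\mathbb{F}_r}\rho_\mathbb{F})^*=\frac{3|x_H|^2}{\rho_\mathbb{F}^{3}}I_r-\frac{3}{\rho_\mathbb{F}}D_{\mathbb{F}_r}\rho_\mathbb{F}\otimes D_{\mathbb{F}_r}\rho_\mathbb{F}$ (homogeneous of degree $-1$, and consistent with the later formula for $\log\rho_\mathbb{F}$), so the powers $\rho_\mathbb{F}^4$ and $\rho_\mathbb{F}^2$ in the displayed statement are off by one and should be corrected, not reproduced.
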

\begin{proof}
We drop the subscript $\mathbb{F}$ in the homogeneous norm $\rho$ for ease of notation. We have
\begin{equation*}
X_k\rho=\frac{1}{\rho^3}\left[x_k\sum_{j=1}^rx_j^2+\left(\sum_{j>k}x_jt_{jk}-\sum_{j<k}x_jt_{kj}\right)\right] .
\end{equation*}
Thus, we obtain
\begin{equation*}
|D_{\mathbb{F}_r}\rho|^2=\frac{|x_H|^6}{\rho^6}+\frac{1}{\rho^6}\sum_k\left(\sum_{j>k}x_jt_{jk}-\sum_{j<k}x_jt_{kj}\right)^2 .
\end{equation*}
Moreover, we can compute
\[
X_k(X_k\rho)=\frac{1}{\rho^3}\left[\sum_{j=1}^r x_j^2+2x_k^2+2\sum_{j=1,j\neq k}^rx_j^2\right]-\frac{3}{\rho}X_k\rho X_k\rho=\frac{3|x_H|^2}{\rho^3}I_r-\frac{3}{\rho}X_k\rho X_k\rho
\]
and
\[
X_i(X_k\rho)=\frac{1}{\rho^3}\left[2x_ix_k-t_{ki}-2x_ix_k\right]-\frac{3}{\rho}X_i\rho X_k\rho\text{ for }i<k;
\]
\[
X_i(X_k\rho)=\frac{1}{\rho^3}\left[2x_ix_k+t_{ik}-2x_ix_k\right]-\frac{3}{\rho}X_i\rho X_k\rho\text{ for }i>k\ .
\]
Therefore, the horizontal hessian $D^2_{\mathbb{F}_r}\rho\in\R^{r\times r}$ is given by
\begin{equation*}
D^2_{\mathbb{F}_r}\rho=\frac{1}{\rho^3}\left[T+3|x_H|^2I_r\right]-\frac{3}{\rho}D_{\mathbb{F}_r}\rho\otimes D_{\mathbb{F}_r}\rho\ ,
\end{equation*}
where $x_H=(x_1,...,x_r)$ and $T$ is the skew-symmetric matrix
\begin{equation*}
T:=\begin{pmatrix}0 &-t_{21}&\hdots&-t_{r1}\\t_{21} & 0 & \hdots&\hdots\\ \vdots & \vdots & \vdots&\vdots \\t_{r1}& \hdots &\hdots&0\end{pmatrix}
\end{equation*}
We then conclude
\begin{equation*}
(D^2_{\mathbb{F}_r}\rho)^*=\frac{3|x_H|^2}{\rho^4}I_r-\frac{3}{\rho^2}D_{\mathbb{F}_r}\rho\otimes D_{\mathbb{F}_r}\rho\ .
\end{equation*}

\end{proof}

 We now consider fully nonlinear problems of the form
\begin{equation}\label{1G}
\mathcal{M}^-_{\lambda,\Lambda}((D^2_{\mathbb{F}_r}u)^*)+H_i(x,u,D_{\mathbb{F}_r}u)=0 \quad \text{ in }\R^d\ ,
\end{equation}
\begin{equation}\label{2G}
\mathcal{M}^+_{\lambda,\Lambda}((D^2_{\mathbb{F}_r}u)^*)+H_s(x,u,D_{\mathbb{F}_r}u)=0 \quad \text{ in }\R^d\ .
\end{equation}

\begin{thm}
\label{corfree}
Assume \eqref{b}-\eqref{c} and
\begin{equation}\label{condcor1free}
\sup_{\alpha\in A}\left\{b^\alpha(x)\cdot\frac{D_{\mathbb{F}_r}\rho}{\rho}-c^\alpha(x)\log\rho\right\}\leq \frac{4\lambda}{\rho^2}|D_{\mathbb{F}_r}\rho|^2-\Lambda\frac{3r}{\rho^4}|x_H|^2
\end{equation}
for $\rho$ large enough. 
\begin{itemize}
\item[(a)] Assume that either $c^\alpha(x)\equiv0$ or $u\geq0$, then \eqref{LP1} holds for \eqref{1G} with $w(x)=\log\rho(x)$.
\item[(b)] Assume that either $c^\alpha(x)\equiv0$ or $v\leq0$, then \eqref{LP2} holds for \eqref{2G} with $W(x)=-\log\rho(x)$.
\end{itemize}
\end{thm}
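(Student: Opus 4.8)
The plan is to reduce both statements to the construction of Lyapunov functions and then invoke Theorem \ref{main}, exactly as in the proof of Theorem \ref{Cor1H}. For part (a) I would take $w(x)=\log\rho(x)$ and show it is a Lyapunov function for \eqref{1G}, i.e.\ a supersolution outside a compact set; for part (b) I would take $W(x)=-\log\rho(x)$ and show it is a negative Lyapunov function for \eqref{2G}. The generators of $\mathbb{F}_r$ are smooth and satisfy \eqref{H}, and \eqref{b}--\eqref{c} are assumed, so the hypotheses of Theorem \ref{main} hold; moreover $\rho\to+\infty$ as $|x|\to\infty$, hence $w\to+\infty$ and $W\to-\infty$, giving the exhaustion property. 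All the content is therefore the differential inequality outside a large Koranyi ball.

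First I would compute the symmetrized horizontal Hessian of the radial function $\log\rho$ from Lemma \ref{hessianfree} with $f'=1/\rho$, $f''=-1/\rho^2$. Since $(D^2_{\mathbb{F}_r}(\log\rho))^*$ has the form $sI_r+a\,D_{\mathbb{F}_r}\rho\otimes D_{\mathbb{F}_r}\rho$, its eigenvalues (by the rank-one structure, cf.\ Lemma \ref{eiggen}) are the tangential value $\frac{3|x_H|^2}{\rho^4}$ with multiplicity $r-1$ and the simple radial value $\frac{3|x_H|^2}{\rho^4}-\frac{4}{\rho^2}|D_{\mathbb{F}_r}\rho|^2$; in particular $\mathrm{Tr}\,(D^2_{\mathbb{F}_r}(\log\rho))^*=\frac{3r}{\rho^4}|x_H|^2-\frac{4}{\rho^2}|D_{\mathbb{F}_r}\rho|^2$.

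The hard part is that, unlike the H-type case of Lemma \ref{pucciHtype}, the radial eigenvalue here does \emph{not} collapse to $f''|D_{\mathbb{F}_r}\rho|^2$, and its sign is not fixed (it is negative at purely horizontal points, where $\rho=|x_H|$ and $|D_{\mathbb{F}_r}\rho|^2=1$, but may change elsewhere), so I cannot read off $\mathcal{M}^-_{\lambda,\Lambda}$ from a clean split into positive and negative eigenvalues. The device I would use is the identity $\mathcal{M}^-_{\lambda,\Lambda}(M)=-\lambda\,\mathrm{Tr}(M)-(\Lambda-\lambda)\,\mathrm{Tr}^+(M)$, where $\mathrm{Tr}^+(M)=\sum_{e_i(M)>0}e_i\geq0$. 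Because every positive eigenvalue of $(D^2_{\mathbb{F}_r}(\log\rho))^*$ is at most the tangential value $\frac{3|x_H|^2}{\rho^4}$ (the radial one never exceeds it), one gets $\mathrm{Tr}^+\leq\frac{3r}{\rho^4}|x_H|^2$, and since $\Lambda\geq\lambda$ this yields the lower bound
\[
\mathcal{M}^-_{\lambda,\Lambda}((D^2_{\mathbb{F}_r}(\log\rho))^*)\geq \frac{4\lambda}{\rho^2}|D_{\mathbb{F}_r}\rho|^2-\Lambda\frac{3r}{\rho^4}|x_H|^2 .
\]
As $D_{\mathbb{F}_r}(\log\rho)=D_{\mathbb{F}_r}\rho/\rho$, the supersolution inequality $\mathcal{M}^-_{\lambda,\Lambda}((D^2_{\mathbb{F}_r}w)^*)+\inf_{\alpha}\{c^\alpha\log\rho-b^\alpha\cdot D_{\mathbb{F}_r}\rho/\rho\}\geq0$ then follows for $\rho$ large directly from hypothesis \eqref{condcor1free}, so $w$ is a Lyapunov function and Theorem \ref{main}(a) gives (a).

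For part (b) I would argue by symmetry. Writing $W=-\log\rho$ one has $(D^2_{\mathbb{F}_r}W)^*=-(D^2_{\mathbb{F}_r}(\log\rho))^*$, and using $\mathcal{M}^+_{\lambda,\Lambda}(-M)=-\mathcal{M}^-_{\lambda,\Lambda}(M)$ together with $D_{\mathbb{F}_r}W=-D_{\mathbb{F}_r}\rho/\rho$, the subsolution inequality for \eqref{2G} reduces to the very same algebraic condition \eqref{condcor1free}. Hence $W$ is a negative Lyapunov function, and Theorem \ref{main}(b) yields (b). The only genuinely new ingredient beyond the H-type argument is the trace decomposition used to bound $\mathcal{M}^-_{\lambda,\Lambda}$ in the absence of a definite sign for the radial eigenvalue.
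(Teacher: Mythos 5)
Your proof is correct and follows essentially the same route as the paper: the same Lyapunov functions $w=\log\rho$, $W=-\log\rho$, the same reduction via Lemma \ref{hessianfree} and Theorem \ref{main}, and the identical lower bound $\mathcal{M}^-_{\lambda,\Lambda}((D^2_{\mathbb{F}_r}\log\rho)^*)\geq \frac{4\lambda}{\rho^2}|D_{\mathbb{F}_r}\rho|^2-\Lambda\frac{3r}{\rho^4}|x_H|^2$. The only (harmless) difference is the algebraic device at that step: the paper splits the Hessian as $\frac{3|x_H|^2}{\rho^4}I_r-\frac{4}{\rho^2}D_{\mathbb{F}_r}\rho\otimes D_{\mathbb{F}_r}\rho$ and uses superadditivity of $\mathcal{M}^-_{\lambda,\Lambda}$ on the two pieces, whereas you use the representation $\mathcal{M}^-_{\lambda,\Lambda}(M)=-\lambda\,\mathrm{Tr}(M)-(\Lambda-\lambda)\,\mathrm{Tr}^+(M)$ together with the bound $\mathrm{Tr}^+\leq \frac{3r}{\rho^4}|x_H|^2$; both correctly avoid determining the sign of the simple ``radial'' eigenvalue and yield the same constant.
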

\begin{proof} The proof follows the same lines at that of Theorem \ref{Cor1H} with the Lyapunov function $w:=\log\rho$, $\rho=\rho_{\mathbb{F}}$.
%
By Lemma \ref{hessianfree} and the chain rule we get 
\begin{equation}\label{horhessfree}
(D^2_{\mathbb{F}_r}\log(\rho))^*=\frac{3|x_H|^2}{\rho^4}I_r-\frac{4}{\rho^2}D_{\mathbb{F}_r}\rho\otimes D_{\mathbb{F}_r}\rho =: N+M .
\end{equation}
By {Lemma \ref{eiggen}} the eigenvalues of $M$ are $-\frac{4}{\rho^2}|D_{\mathbb{F}_r}\rho|^2$, which is simple, and $0$ with multiplicity $r-1$, while the eigenvalue of $N$ is $3|x_H|^2/\rho^4$ with multiplicity $r$.
By the sub-additivity of the Pucci's minimal operator (cf \cite{CC}) we get
\begin{multline*}
\mathcal{M}^-_{\lambda,\Lambda}(D^2_{\mathbb{F}_r}w)+H_i(x,w,D_{\mathbb{F}_r}w) \geq \mathcal{M}^-_{\lambda,\Lambda}\left(-\frac{4}{\rho^2}D_{\mathbb{F}_r}\rho\otimes D_{\mathbb{F}_r}\rho\right)+\mathcal{M}^-_{\lambda,\Lambda}\left(\frac{1}{\rho^4}(3|x_H|^2I_r)\right)\\
+\inf_{\alpha\in A}\left\{c^\alpha(x)\log\rho-b^\alpha(x)\cdot\frac{D_{\mathbb{F}_r}\rho}{\rho}\right\}=\frac{4\lambda}{\rho^2}|D_{\mathbb{F}_r}\rho|^2-\Lambda\frac{3r}{\rho^4}|x_H|^2\\
+\inf_{\alpha\in A}\left\{c^\alpha(x)\log\rho-b^\alpha(x)\cdot\frac{D_{\mathbb{F}_r}\rho}{\rho}\right\}\ .
\end{multline*}
Hence $w$ is a supersolution of \eqref{1G} at all $x$ where \eqref{condcor1free} holds. 
\end{proof}
%
\begin{rem} 
\label{F-H}
This result can be compared with  the one obtained in \cite{BG_lio1} for 
 the Heisenberg group $\mathbb{H}^1$, which is a free step two Carnot group 
 with $r=2$. 
We set $\bar \eta:= \rho^3D_{\mathbb{F}_r}\rho$ and recall that in this case $|D_{\mathbb{F}_2}\rho|^2=\frac{|x_H|^2}{\rho^2}$ by \cite[Lemma 3.1]{CTchou}. Then \eqref{condcor1free} reads
\begin{equation*}
\sup_{\alpha\in A}\{b^\alpha(x)\cdot \bar \eta-c^\alpha(x)\rho^4\log\rho\}\leq \left(4\lambda-6\Lambda\right)|x_H|^2 .
\end{equation*}
On the other hand, the sufficient condition in \cite[Theorem 4.2]{BG_lio1} is 
\begin{equation*}
\sup_{\alpha\in A}\{b^\alpha(x)\cdot \bar \eta-c^\alpha(x)\rho^4\log\rho\}\leq (\lambda-3\Lambda)|x_H|^2\ .
\end{equation*}
The two conditions coincide only if $\lambda=\Lambda$, i.e., when the operators reduce to the sub-Laplacian. Then for the nonlinear case
$
\lambda<\Lambda$ Theorem \ref{corfree} is not sharp, as expected, 
because in the proof we had to use the sub- and superadditivity inequalities of the extremal operators.
\end{rem}

We now address the extremal operators $\mathcal{P}^\pm_\lambda$, introduced by Pucci in \cite{Pucci66}, computed over horizontal Hessians. These operators can be written in terms of  the ordered eigenvalues of the matrix $M\in\Sym_m$ as follows 
\begin{equation*}
\label{representation+}
\mathcal{P}^{+}_{\lambda}(M)=\sup_{A\in\mathcal{B}_\lambda}-\mathrm{Tr}(AM)=-\lambda\sum_{k=2}^{m
}e_k-[1-(m
-1)\lambda]e_1=-\lambda\mathrm{Tr}(M)-(1-m
\lambda)e_1 
\,,
\end{equation*}
\begin{equation*}
\label{representation-}
\mathcal{P}^{-}_{\lambda}(M)=\inf_{A\in\mathcal{B}_\lambda}-\mathrm{Tr}(AM)=-\lambda\sum_{k=1}^{m-1}e_k-[1-(m-1)\lambda]e_m=-\lambda\mathrm{Tr}(M)-(1-m\lambda)e_m\, .
\end{equation*}
where
\[
\mathcal{B}_\lambda=\{A\in\mathrm{Sym}_m: A\geq \lambda I_d\ ,\mathrm{Tr}(A)=1\} .
\]
Note that they can be computed by knowing  only the trace and an extremal eigenvalue of the Hessian, different from $\mathcal{M}^\pm$ that require the sign of all the eigenvalues.
We have the following {sharp} Liouville property for the equations
\begin{equation}\label{pucci1bis}
\mathcal{P}^-_{\lambda}((D^2_{\mathbb{F}_r}u)^*)+H_i(x,u,D_{\mathbb{F}_r}u)=0\quad \text{ in }\R^{d}\ ,
\end{equation}
\begin{equation}
\label{pucci2bis}
\mathcal{P}^+_{\lambda}((D^2_{\mathbb{F}_r}u)^*)+H_s(x,u,D_{\mathbb{F}_r}u)=0\quad \text{ in }\R^{d}\ .
\end{equation}
\begin{cor}
\label{Cor2P}
For $H_i, H_s$ defined by \eqref{Hi}, \eqref{Hs} with $b^\alpha : \R^d\to\R^r$, 
assume
 \eqref{b}-\eqref{c} and
\begin{equation}\label{condcor1freepucci} 
\sup_{\alpha\in A}\{b^\alpha(x)\cdot \bar \eta-c^\alpha(x)\rho^4\log\rho\}\leq -3|x_H|^2+4\lambda\rho^2|D_{\mathbb{F}_r}\rho|^2\ ,
\end{equation}
for $\rho$ sufficiently large, 
where $\bar \eta :=\rho^3 D_{\mathbb{F}_r}\rho$.
\begin{itemize}
\item[(c)] Assume that either $c^\alpha(x)\equiv0$ or $u\geq0$, then \eqref{LP1} holds for \eqref{pucci1bis} with $w(x)=\log\rho(x)$.
\item[(d)] Assume that either $c^\alpha(x)\equiv0$ or $v\leq0$, then \eqref{LP2} holds for \eqref{pucci2bis} with $W(x)=-\log\rho(x)$.
\end{itemize}
\end{cor}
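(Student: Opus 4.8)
The plan is to argue exactly as in Theorem~\ref{Cor1H} and Theorem~\ref{corfree}: I would produce $w(x)=\log\rho(x)$ as a Lyapunov function for \eqref{pucci1bis} and $W=-\log\rho$ as a negative Lyapunov function for \eqref{pucci2bis}, and then invoke Theorem~\ref{main}. Before that I would record that the classical Pucci operators $\mathcal{P}^\pm_\lambda$ fit the abstract framework: since every $A\in\mathcal{B}_\lambda$ satisfies $\lambda I_r\leq A\leq (1-(r-1)\lambda)I_r$, the family $\mathcal{B}_\lambda$ is contained in $\{A:\lambda I_r\leq A\leq \Lambda I_r\}$ with $\Lambda:=1-(r-1)\lambda$, whence $\mathcal{P}^\pm_\lambda$ is uniformly subelliptic in the sense of \eqref{unifsubell} with these constants, and $G=\mathcal{P}^-_\lambda+H_i$ (resp. $G=\mathcal{P}^+_\lambda+H_s$) obeys \eqref{G>H} (resp. \eqref{G<H}) with equality at the zero matrix.

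The decisive step, and the reason the statement is sharp, is that $\mathcal{P}^-_\lambda$ can be evaluated \emph{exactly} on $(D^2_{\mathbb{F}_r}\log\rho)^*$, with no recourse to the sub-additivity inequalities that made Theorem~\ref{corfree} non-sharp. By \eqref{horhessfree} and Lemma~\ref{eiggen}, $(D^2_{\mathbb{F}_r}\log\rho)^*$ has the simple eigenvalue $\frac{3|x_H|^2}{\rho^4}-\frac{4}{\rho^2}|D_{\mathbb{F}_r}\rho|^2$ along $D_{\mathbb{F}_r}\rho$ and the eigenvalue $\frac{3|x_H|^2}{\rho^4}$ with multiplicity $r-1$; the latter is the largest one. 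Feeding the trace and this maximal eigenvalue into the representation $\mathcal{P}^-_\lambda(M)=-\lambda\,\mathrm{Tr}(M)-(1-r\lambda)e_r$ gives
\[
\mathcal{P}^-_\lambda\big((D^2_{\mathbb{F}_r}\log\rho)^*\big)=\frac{4\lambda}{\rho^2}|D_{\mathbb{F}_r}\rho|^2-\frac{3|x_H|^2}{\rho^4}.
\]

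Consequently $w=\log\rho$ is a supersolution of \eqref{pucci1bis} at every point where
\[
\frac{4\lambda}{\rho^2}|D_{\mathbb{F}_r}\rho|^2-\frac{3|x_H|^2}{\rho^4}+\inf_{\alpha\in A}\Big\{c^\alpha(x)\log\rho-b^\alpha(x)\cdot\frac{D_{\mathbb{F}_r}\rho}{\rho}\Big\}\geq 0,
\]
and, after multiplying by $\rho^4$ and setting $\bar\eta=\rho^3 D_{\mathbb{F}_r}\rho$, this is precisely \eqref{condcor1freepucci}, which holds for $\rho$ large; Theorem~\ref{main}(a) then yields part~(c). For part~(d) I would use the identity $\mathcal{P}^+_\lambda(-M)=-\mathcal{P}^-_\lambda(M)$ together with $(D^2_{\mathbb{F}_r}(-\log\rho))^*=-(D^2_{\mathbb{F}_r}\log\rho)^*$ and $D_{\mathbb{F}_r}(-\log\rho)=-D_{\mathbb{F}_r}\rho/\rho$: the same computation shows that $W=-\log\rho$ is a subsolution of \eqref{pucci2bis} under the very same condition \eqref{condcor1freepucci}, and Theorem~\ref{main}(b) concludes. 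I expect the only genuinely delicate point to be the bookkeeping of the eigenvalue ordering and of the signs in passing from $\mathcal{P}^-_\lambda$ to $\mathcal{P}^+_\lambda$; once the correct extremal eigenvalue is identified, everything reduces to a direct substitution.
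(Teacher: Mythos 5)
Your proposal is correct and follows essentially the same route as the paper: the paper's proof likewise computes the two eigenvalues of $(D^2_{\mathbb{F}_r}\log\rho)^*$ from \eqref{horhessfree} via Lemma~\ref{eiggen}, evaluates $\mathcal{P}^-_{\lambda}$ exactly as $\frac{4\lambda}{\rho^2}|D_{\mathbb{F}_r}\rho|^2-\frac{3|x_H|^2}{\rho^4}$, and concludes as in Theorem~\ref{Cor1H}. Your additional remarks (the inclusion $\mathcal{B}_\lambda\subseteq\{\lambda I_r\leq A\leq(1-(r-1)\lambda)I_r\}$ making $\mathcal{P}^\pm_\lambda$ uniformly subelliptic, and the symmetry $\mathcal{P}^+_\lambda(-M)=-\mathcal{P}^-_\lambda(M)$ for part (d)) are accurate and only make explicit what the paper leaves implicit.
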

\begin{proof}
The proof 
follows the same line as that of Theorem \ref{Cor1H} with $\rho=\rho_{\mathbb{F}}$. 
By {Lemma \ref{eiggen}} the eigenvalues of the horizontal Hessian in \eqref{horhessfree} are $\frac{3|x_H|^2}{\rho^4}$ with multiplicity $r-1$ and $\frac{3|x_H|^2}{\rho^4}-\frac{4}{\rho^2}|D_{\mathbb{F}_r}\rho|^2$ which is simple. Then
\[
\mathcal{P}^-_{\lambda}((D^2_{\G_r}\log\rho)^*)=\frac{4\lambda}{\rho^2}|D_{\mathbb{F}_r}\rho|^2-\frac{3|x_H|^2}{\rho^4} .
\]
\end{proof}

\begin{rem}
As in Remark \ref{F-H} we compare with  the result obtained in 
 \cite{BG_lio1} for 
 the Heisenberg group $\mathbb{H}^1$. Now
condition \eqref{condcor1freepucci} is the same as the one in 
 \cite[Corollary 4.10]{BG_lio1}, so Corollary \ref{Cor2P} is sharp.
\end{rem}
\begin{rem}
The Liouville comparison principle Corollary \ref{comparisonH} continues to hold for equations \eqref{1G} and \eqref{2G} assuming \eqref{condcor1free} instead of \eqref{condH}, and for equations \eqref{pucci1bis} and \eqref{pucci2bis} assuming \eqref{condcor1freepucci}.
\end{rem}
\begin{rem}
Results for equations involving the horizontal Hessian $D^2_{\mathbb{F}_r}u$ and the Euclidean gradient $Du$ can be found as in Section \ref{subsec_mix}.
\end{rem}

\section{Fully nonlinear equations on Grushin geometries}
\label{sec_gru}
\subsection{Preliminaries on Grushin structures}

The prototype sub-Riemannian geometry of Grushin-type is the one generated by the vector fields 
\begin{equation}
\label{grushin}
X=\partial_{x} \, \quad Y=x\partial_{y}
\end{equation}
for $p=(x,y)\in\R^2$, 
that leads to the so-called Grushin operator defined as
\[
\Delta_\X:=\partial_{x}^2+x^2\partial_{y}^2\ ,\quad (x,y)\in\R^2\ .
\]
Unlike the examples presented in the previous section in the realm of Carnot groups, here the vector fields are not left-invariant with respect to any group action on $\R^2$. However, one can easily check that $X$ and $Y$ satisfy the H\"ormander condition, since at the origin we have $\mathrm{Span}\{X,Y\}=\mathrm{Span}\{X\}\neq\R^2$, but $[X,Y]=\partial_y$ and $\mathrm{Span}\{X,Y,[X,Y]\}=\R^2$ at any point $(x,y)\in\R^2$. In this case, we consider the following homogeneous norm
\begin{equation}
\label{homnorm}
\rho(x,y)=(x^4+4y^2)^{\frac14}\ ,
\end{equation}
which is 1-homogeneous with respect to the dilations $\delta_\lambda(x,y)=(\lambda x,\lambda^2 y)$, $\lambda>0$. 

 A multidimensional counterpart can be defined as follows. Let $\gamma$ be a positive real number and $(x,y)\in\R^d=\R^n_x\times\R^k_y$ with $n,k\geq1$. Consider the vector fields
\begin{equation}
\label{grushin_d}
X_i=\partial_{x_i}\ ,Y_j=|x|^\gamma\partial_{y_j}\ ,i=1,...,n\, , \,j=1,...,k
\end{equation}
and denote by $D_\X =(D_x,|x|^\gamma D_y)$ the horizontal gradient and $\Delta_\X=\Delta_x+|x|^{2\gamma}\Delta_y$ the Grushin sub-Laplacian and, finally, let 
$$
Q:=n+(1+\gamma)k
$$ be the corresponding homogeneous dimension. For such structures we consider the homogeneous norm
\begin{equation}
\label{homnorm_d}
\rho(x,y)=\left(|x|^{2(1+\gamma)}+(1+\gamma)^2|y|^2\right)^{\frac{1}{2+2\gamma}}\ .
\end{equation}
When $\gamma=0$ this generalizes the classical Laplacian, while for $\gamma>0$ the ellipticity of the operator 
 degenerates at $\{0\}\times\R^k\subseteq \R^d$. When $\gamma=2k$, $k\in\N$, $\Delta_\X$ is a sum of squares of $C^\infty$ vector fields fulfilling the H\"ormander's rank condition. 

We start with the following algebraic results on the plane.
\begin{lemma}\label{hessianradial}
Let $\X$ and $\rho$ be defined by \eqref{grushin} and \eqref{homnorm}. Then
$D_\X \rho=(X\rho,Y\rho)
=\frac{1}{\rho^3}\left(x^3,2xy\right)$, 
$
|D_\X\rho|^2=\frac{x^2}{\rho^2}$, and 
\[
(D^2_{\mathcal{X}}\rho)^*=\frac{1}{\rho^3}\begin{pmatrix}3x^2& y\\y & 2x^2\end{pmatrix}-\frac{3}{\rho}D_{\mathcal{X}}\rho\otimes D_{\mathcal{X}}\rho \ .
\]
In particular, for a radial function $f=f(\rho)$ we have
\[
(D^2_{\mathcal{X}}f(\rho))^*=\frac{f'(\rho)}{\rho^3}\begin{pmatrix}3x^2& y\\y & 2x^2\end{pmatrix}+\left(f''(\rho)-\frac{3f'(\rho)}{\rho}\right)D_{\mathcal{X}}\rho\otimes D_{\mathcal{X}}\rho\ .
\]
\end{lemma}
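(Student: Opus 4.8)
The plan is a direct computation from the explicit form of the gauge, parallel to Lemma \ref{hesH}. First I would record the Euclidean partials of $\rho$: from $\rho^4=x^4+4y^2$ one gets $\partial_x\rho=x^3/\rho^3$ and $\partial_y\rho=2y/\rho^3$. Since $X=\partial_x$ and $Y=x\partial_y$, this gives at once $X\rho=x^3/\rho^3$ and $Y\rho=x\,\partial_y\rho=2xy/\rho^3$, which is the claimed $D_\X\rho=\frac{1}{\rho^3}(x^3,2xy)$. Squaring and summing, then using $x^4+4y^2=\rho^4$, yields $|D_\X\rho|^2=(x^6+4x^2y^2)/\rho^6=x^2\rho^4/\rho^6=x^2/\rho^2$.

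Next I would compute the four second-order horizontal derivatives by applying the fields to the first derivatives above and using the product rule together with the values of $\partial_x\rho,\partial_y\rho$. After substituting $\partial_x\rho=X\rho$ and $\partial_y\rho=Y\rho/x$, the diagonal terms simplify to $X^2\rho=3x^2/\rho^3-\frac{3}{\rho}(X\rho)^2$ and $Y^2\rho=2x^2/\rho^3-\frac{3}{\rho}(Y\rho)^2$ (equivalently $Y^2=x^2\partial_y^2$ as an operator, since $x$ commutes with $\partial_y$). The two mixed derivatives differ: one finds $XY\rho=2y/\rho^3-6x^4y/\rho^7$ and $YX\rho=-6x^4y/\rho^7$, whose difference is exactly $[X,Y]\rho=\partial_y\rho$. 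Symmetrizing, the commutator drops out and the $(1,2)$ entry becomes $y/\rho^3-\frac{3}{\rho}(X\rho)(Y\rho)$. Collecting the three symmetrized entries gives the stated identity $(D^2_{\mathcal X}\rho)^*=\frac{1}{\rho^3}\left(\begin{smallmatrix}3x^2 & y\\ y & 2x^2\end{smallmatrix}\right)-\frac{3}{\rho}D_{\mathcal X}\rho\otimes D_{\mathcal X}\rho$, the last tensor term appearing uniformly in all entries.

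Finally, for a radial $f=f(\rho)$ I would apply the chain and product rules: writing $X_if(\rho)=f'(\rho)X_i\rho$ and differentiating again, $X_jX_if(\rho)=f''(\rho)(X_j\rho)(X_i\rho)+f'(\rho)X_jX_i\rho$; symmetrizing in $(i,j)$ gives the operator identity $(D^2_{\mathcal X}f(\rho))^*=f''(\rho)\,D_{\mathcal X}\rho\otimes D_{\mathcal X}\rho+f'(\rho)\,(D^2_{\mathcal X}\rho)^*$. Substituting the formula for $(D^2_{\mathcal X}\rho)^*$ just obtained and collecting the $D_{\mathcal X}\rho\otimes D_{\mathcal X}\rho$ terms produces the coefficient $f''(\rho)-\frac{3f'(\rho)}{\rho}$ and leaves the term $\frac{f'(\rho)}{\rho^3}\left(\begin{smallmatrix}3x^2 & y\\ y & 2x^2\end{smallmatrix}\right)$, which is the claim.

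The only subtle point, and the sole place where the non-commutativity of the Grushin fields enters, is the asymmetry $XY\rho\neq YX\rho$; since the symmetrized Hessian is built from $\frac{X_iX_j+X_jX_i}{2}$, the commutator contribution $[X,Y]\rho$ cancels and one is left with a symmetric matrix. Everything else is routine bookkeeping with the two first derivatives of $\rho$, so I do not expect any genuine obstacle.
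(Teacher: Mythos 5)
Your computation is correct and follows essentially the same route as the paper: compute $X\rho$, $Y\rho$ directly from $\rho^4=x^4+4y^2$, obtain the four second horizontal derivatives by the product rule (the paper likewise finds $XY\rho=\frac{2y}{\rho^3}-\frac{3}{\rho}X\rho\,Y\rho$ and $YX\rho=-\frac{3}{\rho}Y\rho\,X\rho$, which are your explicit expressions), symmetrize so the commutator contribution $[X,Y]\rho=\partial_y\rho$ averages into the $y/\rho^3$ off-diagonal entry, and finish with the chain rule $(D^2_{\mathcal X}f(\rho))^*=f'(\rho)(D^2_{\mathcal X}\rho)^*+f''(\rho)\,D_\X\rho\otimes D_\X\rho$. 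No gaps.
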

\begin{proof}
The computation of $D_\X\rho$ is easy. 
Then the entries of the intrinsic Hessian $D^2_\X\rho$ are given by
\begin{equation*}
X(X\rho)=\frac{3x^2}{\rho^3}-\frac{3}{\rho}X\rho X\rho\ ;\ X(Y\rho)=\frac{2y}{\rho^3}-\frac{3}{\rho}X\rho Y\rho\ ;\ Y(X\rho)=-\frac{3}{\rho}Y\rho X\rho\ ;\ Y(Y\rho)=\frac{2x^2}{\rho^3}-\frac{3}{\rho}Y\rho Y\rho\ .
\end{equation*}
Therefore, the matrix $D^2_{\mathcal{X}}\rho$ can be written as
\begin{equation*}
D^2_{\mathcal{X}}\rho=\frac{1}{\rho^3}\begin{pmatrix}3x^2& 2y\\0 & 2x^2\end{pmatrix}-\frac{3}{\rho}D_{\mathcal{X}}\rho\otimes D_{\mathcal{X}}\rho\ .
\end{equation*}
and the lemma is proved by exploiting the chain rule $(D^2_{\mathcal{X}}f(\rho))^*=f'(\rho)(D^2_{\mathcal{X}}\rho)^*+f''(\rho)D_\X \rho\otimes D_\X \rho$.
\end{proof}
We now prove an analogous result for the multidimensional generalized Grushin vector fields. 
\begin{lemma}
\label{hesgru} 
Let $\X$ and $\rho$ be defined by \eqref{grushin_d} and \eqref{homnorm_d}. Then
\[
D_\X \rho=\frac{\left(|x|^{2\gamma}x_i,(1+\gamma)y_j|x|^{\gamma}\right)}{\rho^{2\gamma+1}} \,,\quad
|D_\X\rho|^2=\frac{|x|^{2\gamma}}{\rho^{2\gamma}} \,,
\]
\begin{multline*}
(D^2_\X\rho)^*=\frac{|x|^{2\gamma}}{\rho^{2\gamma+1}}I_d+2\gamma\frac{|x|^{2\gamma}}{\rho^{2\gamma+1}}\begin{pmatrix} \frac{x}{|x|}\otimes \frac{x}{|x|} & 0_{\R^{k \times k}}\\ 0_{\R^{k\times k}} & 0_{\R^{k\times k}}\end{pmatrix}-\frac{(2\gamma+1)}{\rho}D_\X\rho\otimes D_\X\rho\\+\gamma\frac{|x|^{2\gamma}}{\rho^{2\gamma+1}}\begin{pmatrix} 0_{\R^n} & 0_{\R^{k\times k}}\\ 0_{\R^{k\times k}} & I_k\end{pmatrix}
+\frac{\gamma(1+\gamma)}{2}\frac{|x|^{\gamma-2}}{\rho^{2\gamma+1}}\begin{pmatrix}0_{\R^{n\times n}} & x\otimes y\\ y\otimes x & 0_{\R^{k\times k}}\end{pmatrix}\ .
\end{multline*}
For a radial function for $f=f(\rho)$
\begin{multline*}
(D^2_{\mathcal{X}}f(\rho))^*=f'(\rho)\frac{|x|^{2\gamma}}{\rho^{2\gamma+1}}I_d+2\gamma f'(\rho)\frac{|x|^{2\gamma}}{\rho^{2\gamma+1}}\begin{pmatrix} \frac{x}{|x|}\otimes \frac{x}{|x|} & 0_{\R^{k \times k}}\\ 0_{\R^{k\times k}} & 0_{\R^{k\times k}}\end{pmatrix}+f'(\rho)\gamma\frac{|x|^{2\gamma}}{\rho^{2\gamma+1}}\begin{pmatrix} 0_{\R^n} & 0_{\R^{k\times k}}\\ 0_{\R^{k\times k}} & I_k\end{pmatrix}\\
+f'(\rho)\frac{\gamma(1+\gamma)}{2}\frac{|x|^{\gamma-2}}{\rho^{2\gamma+1}}\begin{pmatrix}0_{\R^{n\times n}} & x\otimes y\\ y\otimes x & 0_{\R^{k\times k}}\end{pmatrix}
+\left(f''(\rho)-f'(\rho)\frac{(2\gamma+1)}{\rho}\right)D_\X\rho\otimes D_\X\rho
\end{multline*}
and
 \[
\Delta_\X f(\rho)=\frac{|x|^{2\gamma}}{\rho^{2\gamma}}\left(f''(\rho)+f'(\rho)\frac{Q-1}{\rho}\right)
\]
\end{lemma}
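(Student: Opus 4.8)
The plan is to prove the lemma by direct computation, following the same strategy as in Lemma~\ref{hesH} and Lemma~\ref{hessianradial}, but keeping careful track of the block structure induced by the splitting $\R^d=\R^n_x\times\R^k_y$. First I would differentiate $\rho$ explicitly: writing $\rho^{2+2\gamma}=|x|^{2(1+\gamma)}+(1+\gamma)^2|y|^2$ and using the chain rule one gets $\partial_{x_i}\rho=|x|^{2\gamma}x_i/\rho^{2\gamma+1}$ and $\partial_{y_j}\rho=(1+\gamma)y_j/\rho^{2\gamma+1}$. Multiplying by the coefficients of the fields gives $X_i\rho=|x|^{2\gamma}x_i/\rho^{2\gamma+1}$ and $Y_j\rho=(1+\gamma)y_j|x|^\gamma/\rho^{2\gamma+1}$, which is the claimed expression for $D_\X\rho$. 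Summing the squares and recognizing $|x|^{2\gamma+2}+(1+\gamma)^2|y|^2=\rho^{2+2\gamma}$ immediately yields $|D_\X\rho|^2=|x|^{2\gamma}/\rho^{2\gamma}$.

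The core of the argument is the computation of the four families of second derivatives. Differentiating the first-order expressions, each one naturally splits into a ``polynomial'' part and a term proportional to $-\tfrac{2\gamma+1}{\rho}$ times a product of first derivatives, arising from differentiating $\rho^{-(2\gamma+1)}$. Explicitly,
\begin{align*}
X_iX_l\rho &= \frac{|x|^{2\gamma}}{\rho^{2\gamma+1}}\delta_{il}+\frac{2\gamma|x|^{2\gamma}}{\rho^{2\gamma+1}}\frac{x_ix_l}{|x|^2}-\frac{2\gamma+1}{\rho}X_i\rho\,X_l\rho, \\
Y_jY_l\rho &= \frac{(1+\gamma)|x|^{2\gamma}}{\rho^{2\gamma+1}}\delta_{jl}-\frac{2\gamma+1}{\rho}Y_j\rho\,Y_l\rho,
\end{align*}
while $X_iY_j\rho=\frac{\gamma(1+\gamma)|x|^{\gamma-2}x_iy_j}{\rho^{2\gamma+1}}-\frac{2\gamma+1}{\rho}X_i\rho\,Y_j\rho$ and $Y_jX_i\rho=-\frac{2\gamma+1}{\rho}X_i\rho\,Y_j\rho$, the vertical field $Y_j$ annihilating the pure power of $|x|$ in $X_i\rho$; this is exactly why the factor $\tfrac12$ shows up in the off-diagonal coefficient after symmetrizing. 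Re-assembling these blocks into a $d\times d$ matrix produces the stated form of $(D^2_\X\rho)^*$, the four tensor terms corresponding respectively to the isotropic $I_d$, the horizontal anisotropy $\frac{x}{|x|}\otimes\frac{x}{|x|}$, the extra $I_k$ in the vertical block, and the $x\otimes y$ coupling. The radial formula then follows from the chain rule $(D^2_\X f(\rho))^*=f'(\rho)(D^2_\X\rho)^*+f''(\rho)\,D_\X\rho\otimes D_\X\rho$.

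Finally, the sub-Laplacian is the trace of $(D^2_\X f(\rho))^*$, since its diagonal consists of the $X_i^2f$ and $Y_j^2f$. Using $\mathrm{Tr}(v\otimes v)=|v|^2$ together with the value of $|D_\X\rho|^2$, the trace of $(D^2_\X\rho)^*$ collapses to $\frac{|x|^{2\gamma}}{\rho^{2\gamma+1}}\bigl[(n+k)+2\gamma+\gamma k-(2\gamma+1)\bigr]=\frac{|x|^{2\gamma}}{\rho^{2\gamma+1}}(Q-1)$, precisely because $Q=n+(1+\gamma)k$; hence $\mathrm{Tr}((D^2_\X\rho)^*)=|D_\X\rho|^2(Q-1)/\rho$, which gives the claimed expression for $\Delta_\X f(\rho)$. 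The only genuinely delicate point is the bookkeeping: separating the polynomial contributions from the rank-one tails in the mixed derivatives, checking that the asymmetric pieces recombine into a symmetric off-diagonal block, and verifying that the trace cancellation yields exactly $Q-1$. Everything else is routine differentiation.
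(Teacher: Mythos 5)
Your proposal is correct and follows essentially the same route as the paper: direct computation of the first derivatives, the four families of second derivatives (with the polynomial part plus the $-\tfrac{2\gamma+1}{\rho}$ rank-one tail), reassembly into the block form, and the chain rule; your trace bookkeeping $n+k+2\gamma+\gamma k-(2\gamma+1)=Q-1$ checks out. The only difference is that you spell out the trace cancellation explicitly, which the paper leaves implicit.
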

\begin{proof}
We compute
\[
X_i\rho=\frac{|x|^{2\gamma}x_i}{\rho^{2\gamma+1}} \, , \quad
Y_j\rho=\frac{(1+\gamma)y_j|x|^{\gamma}}{\rho^{2\gamma+1}}\,,
\]
from which it readily follows that $|D_\X\rho|^2=\frac{|x|^{2\gamma}}{\rho^{2\gamma}}$ using the definition of the gauge norm. Then
\[
X_jX_i\rho=2\gamma\frac{|x|^{2\gamma-2}x_jx_i}{\rho^{2\gamma+1}}+\frac{|x|^{2\gamma}}{\rho^{2\gamma+1}}\delta_{ij}-\frac{2\gamma+1}{\rho}X_j\rho X_i\rho\ ;\ Y_jY_i\rho=(1+\gamma)\frac{|x|^{2\gamma}}{\rho^{2\gamma+1}}\delta_{ij}-\frac{2\gamma+1}{\rho}Y_j\rho Y_i\rho
\]
\[
X_jY_i\rho=\gamma(1+\gamma)\frac{|x|^{\gamma-2}}{\rho^{2\gamma+1}}x_jy_i-\frac{2\gamma+1}{\rho}X_j\rho Y_i\rho\ ;\ Y_jX_i\rho=-\frac{2\gamma+1}{\rho}Y_j\rho X_i\rho
\]
Therefore
\begin{multline*}
(D^2_\X\rho)^*=\frac{|x|^{2\gamma}}{\rho^{2\gamma+1}}I_d+2\gamma\frac{|x|^{2\gamma}}{\rho^{2\gamma+1}}\begin{pmatrix} \frac{x}{|x|}\otimes \frac{x}{|x|} & 0_{\R^{k \times k}}\\ 0_{\R^{k\times k}} & 0_{\R^{k\times k}}\end{pmatrix}-\frac{(2\gamma+1)}{\rho}D_\X\rho\otimes D_\X\rho\\+\gamma\frac{|x|^{2\gamma}}{\rho^{2\gamma+1}}\begin{pmatrix} 0_{\R^{n\times n}} & 0_{\R^{k\times k}}\\ 0_{\R^{k\times k}} & I_k\end{pmatrix}
+\frac{\gamma(1+\gamma)}{2}\frac{|x|^{\gamma-2}}{\rho^{2\gamma+1}}\begin{pmatrix}0_{\R^{n\times n}} & x\otimes y\\ y\otimes x & 0_{\R^{k\times k}}\end{pmatrix}
\end{multline*}
and the result follows by the chain rule.
\end{proof}

 \subsection{Liouville theorems for linear and quasi-linear equations}
We first underline that the Liouville property for the (classical) subsolutions (supersolutions) bounded from above (below) of the mere Grushin sub-Laplace equation does not hold. Indeed, the function
\[
\bar{u}(x)=\begin{cases}
\frac18[15-10\rho^2+3\rho^4]&\text{ if }\rho\leq1\ ,\\
\frac{1}{\rho}&\text{ if }\rho\geq1\ .
\end{cases}
\] 
is a non-constant bounded classical supersolution to $-\Delta_\X u=-\partial_{xx}u-x^2\partial_{yy}u=0$ in $\R^2$. Indeed, when $\rho\leq1$ one checks, setting
\[
u(x)=g(\rho)=\frac18[15-10\rho^2+3\rho^4]
\]
and using Lemma \ref{hessianradial}, that
\[
(D^2_\X g(\rho))^*={\frac12}\left(3-\frac{5}{\rho^2}\right)\begin{pmatrix}3x^2& y\\y & 2x^2\end{pmatrix}+5D_\X\rho\otimes D_\X\rho.
\]
Therefore, exploiting that $\rho\leq1$ one concludes
\[
-\Delta_\X u_5=-\mathrm{Tr}((D^2_\X g(\rho))^*)=-\frac{15x^2}{2}(1-1/\rho^2)\geq0
\]
Instead, when $\rho\geq1$ one immediately notices that $u_5$ is the fundamental solution of the Grushin sub-Laplacian with pole at the origin {(cf. \cite[Theorem 3.1 or Corollary 5.1]{BieskeGong}, being $Q=3$, see also \cite[Appendix B]{DaLucente} for the multi-dimensional counterpart)}. 
Similarly, $v_5=-u_5$ shows the failure of the Liouville property for subsolutions. 

This example underlines that as soon as the strict 
 ellipticity is not in force the Liouville property may fail even in the plane. 
 
 Similarly, following the above results for Carnot groups, it is immediate to provide a generalization of the previous counterexample to multi-dimensional Grushin geometries: for instance, the function $u(x)=(1+\rho^2)^{1-\frac{Q}{2}}$ is a nonnegative (and even bounded) classical supersolution to $-\Delta_\X u=0$ in $\R^d$, which shows the failure of the one-side Liouville property.

\smallskip
We now consider Liouville properties of the form \eqref{LP1}-\eqref{LP2} for viscosity sub- and supersolutions  of the quasi-linear equations
\begin{equation}\label{gruquasi1}
-\Delta_\X u+H_i(x,u,D_\X u)=0\text{ in }\R^d=\R^n\times\R^k
\end{equation}
and 
\begin{equation}\label{gruquasi2}
-\Delta_\X u+H_s(x,u,D_\X u)=0\text{ in }\R^d=\R^n\times\R^k\ ,
\end{equation}
where $\Delta_\X$ is the Grushin sub-Laplacian and $H_i$, $H_s$ are defined by \eqref{Hi}, \eqref{Hs} with $b^\alpha(x)$ taking values in $\R^{d}$. They are consequences of the general result in Theorem \ref{main} combined with Lemma \ref{hesgru} and can be proved exactly as in the case of Carnot structures, so we omit the proof.
\begin{thm}\label{Gruquasi}
Let $\mathcal{X}$ be the 
 vector fields in $\R^d=\R^n\times\R^k$ defined by \eqref{grushin_d}. Assume \eqref{b}-\eqref{c} and
\begin{equation}\label{Grucond}
\sup_{\alpha\in A}\left\{ b^\alpha(x)\cdot q-\frac{c^\alpha(x)}{|x|^{2\gamma}}\rho^{2\gamma+2}\log\rho\right\}\leq -(Q-2)
\end{equation}
for $\rho$ sufficiently large
, where $Q=n+(1+\gamma)k$ 
 and $q=\left(x,(1+\gamma)\frac{y}{|x|^\gamma}\right)\in\R^d=\R^n\times\R^k$.
\begin{itemize}
\item[(A)] If either $c^\alpha(x)\equiv0$ or $u\geq0$, then \eqref{LP1} holds for \eqref{gruquasi1} with $w(x)=\log\rho(x)$.
\item[(B)] If either $c^\alpha(x)\equiv0$ or $v\leq0$, then \eqref{LP2} holds for \eqref{gruquasi2} with $W(x)=-\log\rho(x)$.
\end{itemize}
\end{thm}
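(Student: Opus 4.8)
The plan is to follow verbatim the scheme of Corollary \ref{quasi_Carnot} and Theorem \ref{Cor1H}: exhibit the Lyapunov function $w=\log\rho$ for \eqref{gruquasi1} (respectively the negative Lyapunov function $W=-\log\rho$ for \eqref{gruquasi2}) and then invoke Theorem \ref{main}. Since $\rho\to\infty$ as $|x|\to\infty$, we automatically have $w\to+\infty$ and $W\to-\infty$, so the only substantial thing to verify is the differential inequality outside a compact set. As $w$ and $W$ are smooth on $\R^d\setminus\{0\}$, this can be checked classically.

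First I would specialize the radial formula of Lemma \ref{hesgru} to $f(\rho)=\log\rho$, where $f'(\rho)=1/\rho$ and $f''(\rho)=-1/\rho^2$, obtaining $-\Delta_\X w=-(Q-2)\,|x|^{2\gamma}/\rho^{2\gamma+2}$. From the same lemma I read off $D_\X w=D_\X\rho/\rho$, and using $D_\X\rho=\rho^{-(2\gamma+1)}\bigl(|x|^{2\gamma}x,\,(1+\gamma)|x|^\gamma y\bigr)$ I would rewrite the drift term by factoring out $|x|^{2\gamma}$: writing $b^\alpha=(b^\alpha_x,b^\alpha_y)$ one finds $b^\alpha\cdot D_\X w=\frac{|x|^{2\gamma}}{\rho^{2\gamma+2}}\,b^\alpha\cdot q$ with $q=\bigl(x,(1+\gamma)y/|x|^\gamma\bigr)$, exactly the vector appearing in the statement.

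The core of the argument is then a purely algebraic reduction. For $x\neq0$ I would collect the common positive factor $|x|^{2\gamma}/\rho^{2\gamma+2}$ in
\[
-\Delta_\X w+H_i(x,w,D_\X w)=-\Delta_\X w+\inf_{\alpha\in A}\{c^\alpha\log\rho-b^\alpha\cdot D_\X w\},
\]
which turns the supersolution inequality $-\Delta_\X w+H_i\ge0$ into $\sup_{\alpha}\{\,b^\alpha\cdot q-\tfrac{c^\alpha}{|x|^{2\gamma}}\rho^{2\gamma+2}\log\rho\,\}\le-(Q-2)$, i.e. precisely \eqref{Grucond}, valid for $\rho$ large. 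The symmetric computation with $W=-\log\rho$ and $H_s=\sup_\alpha\{c^\alpha r-b^\alpha\cdot p\}$ reduces the subsolution inequality $-\Delta_\X W+H_s\le0$ to the very same condition \eqref{Grucond}. Applying Theorem \ref{main}(a) (resp. (b)) under the stated sign hypotheses on $u$, $v$, or $c^\alpha$ then yields (A) (resp. (B)).

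The one point that requires care, and which I expect to be the main obstacle, is the degeneracy locus $\{x=0\}\times\R^k$, where the common factor $|x|^{2\gamma}$ vanishes and the division above is illegitimate. There, however, both $\Delta_\X w$ and $D_\X w$ vanish for $\gamma>0$ (since $|D_\X\rho|^2=|x|^{2\gamma}/\rho^{2\gamma}\to0$), so the supersolution inequality collapses to $\inf_\alpha c^\alpha(x)\log\rho\ge0$, which holds for $\rho$ large because $c^\alpha\ge0$ and $\log\rho>0$; the subsolution case is identical with the reversed sign. One must also confirm that $w$ stays $C^2$ across $\{x=0,\,y\neq0\}$, which is automatic in the H\"ormander regime $\gamma=2k$ where $|x|^{2\gamma}$ and the cross terms $|x|^{\gamma-2}x_jy_i$ are polynomial; with this in place the hypotheses of Theorem \ref{main} are met and the proof closes exactly as in the Carnot setting.
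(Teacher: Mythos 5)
Your proposal is correct and follows exactly the route the paper intends: the paper omits the proof, stating only that the theorem follows from Theorem \ref{main} combined with Lemma \ref{hesgru} ``exactly as in the case of Carnot structures,'' and your computation of $-\Delta_\X w=-(Q-2)|x|^{2\gamma}/\rho^{2\gamma+2}$ and of the drift term, followed by factoring out $|x|^{2\gamma}/\rho^{2\gamma+2}$ to recover \eqref{Grucond}, is precisely that argument. Your additional check on the degeneracy locus $\{x=0\}$, where the inequality reduces to $\inf_\alpha c^\alpha\log\rho\ge 0$, is a detail the paper leaves implicit and is handled correctly.
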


\subsection{Liouville theorems for fully nonlinear problems}
In the Grushin geometry there is no  fundamental solutions to the Pucci's extremal operators, as we had in Section \ref{sec_fund}. Then we can give a sharp result only for the fields \eqref{grushin} in the plane, whereas for the multi-dimensional case \eqref{grushin_d} we will give some sufficient conditions in two remarks at the end of the section.
We start by considering the following nonlinear equations 
\begin{equation}\label{pucci1G}
\mathcal{M}^-_{\lambda,\Lambda}((D^2_{\X}u)^*)+H_i(x,u,D_{\X}u)=0 \quad \text{ in }\R^2\ ,
\end{equation}
and
\begin{equation}\label{pucci2G}
\mathcal{M}^+_{\lambda,\Lambda}((D^2_{\X}u)^*)+H_s(x,u,D_{\X}u)=0 \quad \text{ in }\R^2\ .
\end{equation}
\begin{thm}\label{main;gru}
 Let $\X$ and $\rho$ be defined by \eqref{grushin} and \eqref{homnorm}. Assume \eqref{b}-\eqref{c} and
\begin{equation}\label{condcor1grushin}
2\sup_{\alpha\in A}\{b^\alpha(x)\cdot \tilde{\eta}-c^\alpha(x)\rho^4\log\rho\}\leq(-\Lambda-\lambda)x^2+(-\Lambda+\lambda)\sqrt{9x^4+4y^2}\ ,
\end{equation}
for $|x|,|y|$ sufficiently large, where $\tilde{\eta}:=(x^3,2xy)\in\R^2$ and $x\neq 0$.
\begin{itemize}
\item[(a)] If either $c^\alpha(x)\equiv0$ or $u\geq0$, then \eqref{LP1} holds for \eqref{pucci1G} with $w(x)=\log\rho(x)$.
\item[(b)] If either $c^\alpha(x)\equiv0$ or $v\leq0$, then \eqref{LP2} holds for \eqref{pucci2G} $W(x)=-\log\rho(x)$.
\end{itemize}
\end{thm}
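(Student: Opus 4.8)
The plan is to invoke Theorem \ref{main}, so that (exactly as in the proofs of Theorems \ref{Cor1H}, \ref{corfree} and \ref{Gruquasi}) the whole argument reduces to exhibiting $w=\log\rho$ as a Lyapunov function for \eqref{pucci1G} and, symmetrically, $W=-\log\rho$ as a negative Lyapunov function for \eqref{pucci2G}. The exhaustion requirement $\lim_{|x|\to\infty}w=+\infty$ is immediate since $\rho\to\infty$, so it only remains to verify that $w$ is a (classical, hence viscosity) supersolution of \eqref{pucci1G} in the region where \eqref{condcor1grushin} is assumed.

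First I would specialize Lemma \ref{hessianradial} to $f=\log\rho$ (so that $f'(\rho)=1/\rho$ and $f''(\rho)=-1/\rho^2$), obtaining
\[
(D^2_\X\log\rho)^*=\frac{1}{\rho^4}\begin{pmatrix}3x^2 & y\\ y & 2x^2\end{pmatrix}-\frac{4}{\rho^2}\,D_\X\rho\otimes D_\X\rho .
\]
Inserting $D_\X\rho=\rho^{-3}(x^3,2xy)$ and reducing with $\rho^4=x^4+4y^2$ turns this into an explicit $2\times2$ symmetric matrix $M$ with $\Tr M=x^2/\rho^4$. Being $2\times2$, its eigenvalues are $e_\pm=\tfrac12\Tr M\pm\tfrac12\sqrt{(M_{11}-M_{22})^2+4M_{12}^2}$, so the computation of $\mathcal{M}^-_{\lambda,\Lambda}(M)$ becomes purely algebraic.

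The decisive — and, I expect, only genuinely delicate — step is the simplification of the discriminant. Expanding $(M_{11}-M_{22})^2+4M_{12}^2$ and repeatedly using $\rho^4=x^4+4y^2$, this discriminant collapses to $(9x^4+4y^2)/\rho^8$, whence
\[
e_\pm=\frac{1}{2\rho^4}\Big(x^2\pm\sqrt{9x^4+4y^2}\Big) .
\]
Since $\sqrt{9x^4+4y^2}>x^2$ for $x\neq0$, one has $e_-<0<e_+$, so $M$ falls in the mixed-sign regime and $\mathcal{M}^-_{\lambda,\Lambda}(M)=-\Lambda e_+-\lambda e_-=\tfrac{1}{2\rho^4}\big[(-\Lambda-\lambda)x^2+(-\Lambda+\lambda)\sqrt{9x^4+4y^2}\big]$.

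Finally, as in Theorem \ref{Cor1H}, $w$ is a supersolution wherever $\mathcal{M}^-_{\lambda,\Lambda}((D^2_\X w)^*)+\inf_{\alpha}\{c^\alpha\log\rho-b^\alpha\cdot D_\X\rho/\rho\}\ge0$; using $D_\X\log\rho=\tilde\eta/\rho^4$ with $\tilde\eta=(x^3,2xy)$ and multiplying by $2\rho^4$ recasts this inequality as precisely \eqref{condcor1grushin}. Case (b) is identical: $(D^2_\X(-\log\rho))^*=-M$ and $\mathcal{M}^+_{\lambda,\Lambda}(-M)=-\mathcal{M}^-_{\lambda,\Lambda}(M)$, so the very same condition \eqref{condcor1grushin} makes $W=-\log\rho$ a subsolution of \eqref{pucci2G}. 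The one point demanding extra care — and the reason for the explicit restriction $x\neq0$ — is the characteristic locus $\{x=0\}$, where the off-diagonal entry of $M$ survives while $\Tr M$ vanishes; this degeneracy set is treated by the same subelliptic machinery (the strong maximum principle of \cite{BG}) already invoked in Theorem \ref{main} and the companion paper \cite{BG_lio1}.
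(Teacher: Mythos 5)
Your proposal is correct and follows essentially the same route as the paper: reduce via Theorem \ref{main} to checking that $w=\log\rho$ is a Lyapunov function, compute $(D^2_\X\log\rho)^*$ from Lemma \ref{hessianradial}, and evaluate $\mathcal{M}^-_{\lambda,\Lambda}$ on the explicit eigenvalues $\frac{x^2\pm\sqrt{9x^4+4y^2}}{2\rho^4}$, which have opposite signs for $x\neq0$. The only cosmetic difference is that you obtain the discriminant as $(M_{11}-M_{22})^2+4M_{12}^2$ while the paper computes $(\mathrm{Tr}\,M)^2-4\det M$; both collapse to $(9x^4+4y^2)/\rho^8$ and yield the same condition \eqref{condcor1grushin}.
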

\begin{proof}
As in the proofs of Theorems  \ref{Cor1H} and 
 \ref{corfree} we compute the symmetrized horizontal Hessian of the Lyapunov function $w=\log\rho$. We first note that $w(x)$ explodes as $\rho\to\infty$. 
By Lemma \ref{hessianradial} we have $D_\X\rho= 
\tilde{\eta}/\rho^3$ and the formula 
 for the symmetrized horizontal Hessian of $w$
\begin{equation*}
(D^2_{\mathcal{X}}w)^*=\frac{1}{\rho^4}\begin{pmatrix}3x^2& y\\y & 2x^2\end{pmatrix}-\frac{4}{\rho^2}D_{\mathcal{X}}\rho\otimes D_{\mathcal{X}}\rho\ .
\end{equation*}
We claim that the eigenvalues are
\begin{equation}\label{eiggru}
\left\{\frac{x^2+\sqrt{9x^4+4y^2}}{2\rho^4},\frac{x^2-\sqrt{9x^4+4y^2}}{2\rho^4}\right\}\ .
\end{equation}
Indeed the (symmetrized) horizontal Hessian is given by
\begin{equation*}
(D^2_{\mathcal{X}}w)^*=\begin{pmatrix}\frac{3x^2}{\rho^4}-\frac{4x^6}{\rho^8}& \frac{y}{\rho^4}-\frac{8x^4y}{\rho^8}\\\frac{y}{\rho^4}-\frac{8x^4y}{\rho^8} & \frac{2x^2}{\rho^4}-\frac{16x^2y^2}{\rho^8}\end{pmatrix}\ .
\end{equation*}
Then one computes
\begin{equation*}
\mathrm{Tr}((D^2_{\mathcal{X}}w)^*)=\frac{5x^2}{\rho^4}-\frac{4x^2}{\rho^8}(x^4+4y^2)=\frac{x^2}{\rho^4}\ ,
\end{equation*}
and, by recalling the expression of $\rho$, we also get
\begin{equation*}
\det((D^2_{\mathcal{X}}w)^*)=\frac{6x^4}{\rho^8}-\frac{48x^4y^2}{\rho^{12}}-\frac{8x^8}{\rho^{12}}+\frac{64x^8y^2}{\rho^{16}}-\frac{y^2}{\rho^8}-\frac{64x^8y^2}{\rho^{16}}+\frac{16x^4y^2}{\rho^{12}}=
\end{equation*}
\begin{equation*}
=\frac{(6x^4-y^2)}{\rho^8}-\frac{32x^4y^2}{\rho^{12}}-\frac{8x^8}{\rho^{12}}{=\frac{-2x^4-y^2}{\rho^8}}\ .
\end{equation*}
Then the eigenvalues are given by the formulae 
\begin{equation*}
\lambda_1:=\frac{\mathrm{Tr}((D^2_{\mathcal{X}}w)^*) {+} \sqrt{\mathrm{Tr}((D^2_{\mathcal{X}}w)^*)^2-4\det((D^2_{\mathcal{X}}w)^*)}}{2},
\end{equation*}
and
\begin{equation*}
\lambda_2:=\frac{\mathrm{Tr}((D^2_{\mathcal{X}}w)^*)  {-} \sqrt{\mathrm{Tr}((D^2_{\mathcal{X}}w)^*)^2-4\det((D^2_{\mathcal{X}}w)^*)}}{2}\ .
\end{equation*}
Note that
\[
\sqrt{\mathrm{Tr}((D^2_{\mathcal{X}}w)^*)^2-4\det((D^2_{\mathcal{X}}w)^*)}=\frac{1}{\rho^4}\sqrt{9x^4+4y^2}\ .
\]
Then, we get the eigenvalues \eqref{eiggru}. In particular, we immediately observe that $\lambda_1$ is positive and $\lambda_2$ is negative and this fact allows to compute Pucci's extremal operators over $(D^2_{\mathcal{X}}w)^*$. We have
\begin{equation*}
\mathcal{M}^-_{\lambda,\Lambda}((D^2_{\mathcal{X}}w)^*)+\inf_{\alpha\in A}\{c^\alpha(x)\log\rho-b^\alpha(x)\cdot \frac{\tilde \eta}{\rho^4}\} 
\end{equation*}
\begin{equation*}
=-\Lambda\frac{x^2+\sqrt{9x^4+4y^2}}{2\rho^4}-\lambda\frac{x^2-\sqrt{9x^4+4y^2}}{2\rho^4}
+\inf_{\alpha\in A}\{c^\alpha(x)\log\rho-b^\alpha(x)\cdot \frac{\tilde \eta}{\rho^4}\}\geq0
\end{equation*}
if condition \eqref{condcor1grushin} is satisfied. One can obtain the same sufficient condition for equations of the form \eqref{pucci2G} using the Lyapunov function $W(\rho)=-\log\rho$.
\end{proof}

\begin{rem}
\label{high}
Sufficient conditions similar to those in Theorem \ref{main;gru} can be obtained in the same way for the multi-dimensional case by taking again $\log\rho$ 
as Lyapunov function and using Lemma \ref{hesgru}. However, they are not optimal because the Pucci's extremal operator cannot be computed explicitly if $d\geq 3$ and must be estimated, as in Theorem \ref{corfree}.
\end{rem}
\begin{rem}
\label{high2}
As in Section \ref{sec_cc} we can use the result for the quasilinear case, Theorem \ref{Gruquasi}, for equations of the form
\begin{equation*}
F((D^2_\X u)^*)+H_i(x,u,D_\X u)=0 \quad\text{ in }\R^d\ 
\end{equation*} 
{for any $d\geq 2$, }if for some $\lambda>0$ $F(M)\geq -\lambda\mathrm{Tr}(M)$ for all $M\in \Sym_{{m}}$.
In fact  a subsolution $u$ is also  subsolution of \eqref{gruquasi1}, so under assumption \eqref{Grucond} it satisfies the statement (A) of Theorem \ref{Gruquasi}. This result applies to $F=\mathcal{M}^+_{\lambda,\Lambda}$, and a symmetric one holds for supersolutions of $\mathcal{M}^-_{\lambda,\Lambda}((D^2_\X u)^*)+H_s(x,u,D_\X u)=0$ 
 as in Remark \ref{conv-conc}.
\end{rem}

\begin{rem}
A Liouville comparison principle like Corollary \ref{comparisonH} 
holds for equations \eqref{gruquasi1} and \eqref{gruquasi2} assuming \eqref{Grucond} instead of \eqref{condH}, and for equations \eqref{pucci1G} and \eqref{pucci2G} assuming \eqref{condcor1grushin}.
\end{rem}
\begin{rem}
Results for equations involving the horizontal Hessian $D^2_\X u$ and the Euclidean gradient $Du$ can be found as in Section \ref{subsec_mix}.
\end{rem}


\section{Some intermediate structures: the Heisenberg-Greiner vector fields}
\label{sec_hg}
\subsection{Basic properties} In this section we consider a sub-Riemannian structure which can be seen as intermediate among the Heisenberg group and Grushin geometries. We consider $(x,t)\in\R^{2d}\times\R$, $r=|x|$, $\delta\geq1$ an integer, and the H-G vector fields
\[
X_i=\partial_{x_i}+2\delta x_{i+d}r^{2\delta-2}\partial_t\,, \quad
X_{i+d}=\partial_{x_{i+d}}-2\delta x_{i}r^{2\delta-2}\partial_t\,, \quad i=1,...,d .
\]
We set $Q:=2d+2\delta$ the homogeneous dimension. These correspond to the Heisenberg vector fields when $\delta=1$, while for $\delta>1$ and integer these are called Greiner vector fields, cf \cite{Gcan}. Furthermore, by letting $\delta\to0$ one obtains the Euclidean fields in $\R^{2d}$, while the presence of the coefficient $r$ in the term involving $\partial_t$ reminds the Grushin-type fields already discussed.  Here, it is natural to consider the gauge norm
\[
N((x,y))=((x_1^2+....+x_{2d}^2)^{2\delta}+t^2)^{\frac{1}{4\delta}}=(r^{4\delta}+t^2)^{\frac{1}{4\delta}}.
\]
It is proved in \cite{BGG1,BGG2,BieskeF} that $\Gamma=N^{2-Q}$ is the fundamental solution to the Heisenberg-Greiner operator $\Delta_\X=\sum_{i=1}^{2d}X_i^2$. Then $N$ plays exactly the same role as $\rho$ in the previous sections, we change notation only for consistency with the cited literature.
We now compute the $\X$ derivatives of $N$. 
\begin{lemma}\label{hesGre}
For the vector fields just defined we have
\[
D_\X N=\frac{\eta}{N^{4\delta-1}}\,, \quad |D_\X N|=\frac{r^{2\delta-1}}{N^{2\delta-1}} \,,
\]
for $\eta\in\R^{2d}$ defined as 
$\eta_i=x_ir^{4\delta-2}+x_{i+d}r^{2\delta-2}t$,
$\eta_{i+d}=x_{i+d}r^{4\delta-2}-x_{i}r^{2\delta-2}t$, 
and 
\[
\Delta_\X N=|D_\X N|^2\left(f''(N)+\frac{Q-1}{N}f'(N)\right)\ .
\]
\end{lemma}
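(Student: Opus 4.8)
The plan is to establish the three formulas in turn, starting from the Euclidean derivatives of $N$. Writing $N^{4\delta}=r^{4\delta}+t^2$ with $r^2=\sum_{j=1}^{2d}x_j^2$ and differentiating logarithmically, I would first record $\partial_{x_j}N=x_jr^{4\delta-2}/N^{4\delta-1}$ for $j=1,\dots,2d$ (using $\partial_{x_j}r^{4\delta}=4\delta r^{4\delta-2}x_j$) and $\partial_tN=t/(2\delta N^{4\delta-1})$. Substituting these into $X_i=\partial_{x_i}+2\delta x_{i+d}r^{2\delta-2}\partial_t$ and $X_{i+d}=\partial_{x_{i+d}}-2\delta x_ir^{2\delta-2}\partial_t$, the factor $2\delta$ cancels against the $1/(2\delta)$ in $\partial_tN$ and the terms recombine exactly into $X_iN=\eta_i/N^{4\delta-1}$ and $X_{i+d}N=\eta_{i+d}/N^{4\delta-1}$, which is the first assertion.

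For the norm, I would compute $|D_\X N|^2=N^{2-8\delta}\sum_{i=1}^d(\eta_i^2+\eta_{i+d}^2)$. The key algebraic step is the cancellation of the two cross terms $\pm 2x_ix_{i+d}r^{6\delta-4}t$ when $\eta_i^2$ and $\eta_{i+d}^2$ are added; this gives the clean identity $\eta_i^2+\eta_{i+d}^2=(x_i^2+x_{i+d}^2)r^{4\delta-4}(r^{4\delta}+t^2)=(x_i^2+x_{i+d}^2)r^{4\delta-4}N^{4\delta}$. Summing over $i$ and using $\sum_{j=1}^{2d}x_j^2=r^2$ collapses the sum to $r^{4\delta-2}N^{4\delta}$, so $|D_\X N|^2=r^{4\delta-2}/N^{4\delta-2}$ and hence $|D_\X N|=r^{2\delta-1}/N^{2\delta-1}$, as claimed.

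For the sub-Laplacian of a radial function, the chain rule yields $X_j^2f(N)=f''(N)(X_jN)^2+f'(N)X_j^2N$, so that
\[
\Delta_\X f(N)=f''(N)\,|D_\X N|^2+f'(N)\,\Delta_\X N .
\]
It then remains only to identify $\Delta_\X N$. Rather than differentiating the components $\eta_i,\eta_{i+d}$ a second time (feasible but tedious), I would exploit the fact recalled just above from \cite{BGG1,BGG2,BieskeF} that $\Gamma=N^{2-Q}$ is the fundamental solution of $\Delta_\X$, so that $\Delta_\X N^{2-Q}=0$ on $\R^{2d+1}\setminus\{0\}$. Applying the displayed identity with $f(s)=s^{2-Q}$ and dividing through by the nonvanishing factor $(2-Q)N^{1-Q}$ gives $\Delta_\X N=(Q-1)|D_\X N|^2/N$; substituting this back produces the stated formula
\[
\Delta_\X f(N)=|D_\X N|^2\left(f''(N)+\frac{Q-1}{N}f'(N)\right),
\]
where the left-hand side is to be read as $\Delta_\X f(N)$ (the displayed statement has a harmless typo, printing $\Delta_\X N$ in place of $\Delta_\X f(N)$).

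The only genuinely delicate point is the cross-term cancellation in the computation of $|D_\X N|^2$; everything else is bookkeeping. The fundamental-solution shortcut is what keeps the evaluation of $\Delta_\X N$ short and consistent with the radial formula for Carnot groups in Proposition \ref{subrad}.
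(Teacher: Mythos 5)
Your proposal is correct. The first two formulas are obtained essentially as in the paper (the paper applies the fields $X_i$, $X_{i+d}$ to $N$ directly rather than passing through the Euclidean partials $\partial_{x_j}N$ and $\partial_tN$, but the computation and the cross-term cancellation in $|D_\X N|^2$ are the same). Where you genuinely diverge is the last formula: the paper computes the second derivatives $X_i^2N$ and $X_{i+d}^2N$ explicitly and sums them, using $Q=2d+2\delta$ to recognize $\Delta_\X N=(Q-1)|D_\X N|^2/N$, whereas you bypass this by invoking the fact, quoted from \cite{BGG1,BGG2,BieskeF} just before the lemma, that $N^{2-Q}$ is $\Delta_\X$-harmonic away from the origin, and then solve for $\Delta_\X N$ from the chain-rule identity with $f(s)=s^{2-Q}$ (legitimate since $Q>2$, so the factor $(2-Q)N^{1-Q}$ is nonzero). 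Your shortcut is cleaner and less error-prone, at the price of importing the fundamental-solution result as a black box; the paper's direct computation is self-contained and, as a by-product, exhibits the individual second derivatives, which could be reused to compute the full symmetrized horizontal Hessian (as is done for the H-type and Grushin cases) rather than only its trace. You are also right that the final display in the statement contains a typo ($\Delta_\X N$ should read $\Delta_\X f(N)$), consistent with the formula actually derived in the paper's proof.
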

\begin{proof} We compute 
\[
X_i N=\frac{x_ir^{4\delta-2}}{N^{4\delta-1}}+\frac{x_{i+d}r^{2\delta-2}t}{N^{4\delta-1}} \,,\quad
X_{i+d} N=\frac{x_{i+d}r^{4\delta-2}}{N^{4\delta-1}}-\frac{x_{i}r^{2\delta-2}t}{N^{4\delta-1}} \,.
\]
Therefore
\begin{equation}\label{grad}
|D_\X N|^2=\frac{r^{8\delta-2}}{N^{2(4\delta-1)}}+\frac{r^{4\delta-2}t^2}{N^{2(4\delta-1)}}=\frac{r^{4\delta-2}}{N^{2(4\delta-1)}}\underbrace{(r^{4\delta}+t^2)}_{N^{4\delta}}=\frac{r^{2(2\delta-1)}}{N^{2(2\delta-1)}}
\end{equation}
Moreover, for $i=1,...,d$ we have
\[
X_i^2N=\frac{r^{4\delta-2}}{N^{4\delta-1}}+(4\delta-2)\frac{r^{4(\delta-1)}x_{i}^2}{N^{4\delta-1}}+2x_{i+d}x_i(\delta-1)\frac{r^{2\delta-4}t}{N^{4\delta-1}}+2\delta x_{i+d}^2\frac{r^{4\delta-4}}{N^{4\delta-1}}-\frac{4\delta-1}{N}X_iNX_iN
\]
\[
X_{i+d}^2N=\frac{r^{4\delta-2}}{N^{4\delta-1}}+(4\delta-2)\frac{r^{4(\delta-1)}x_{i+d}^2}{N^{4\delta-1}}-2x_{i+d}x_i(\delta-1)\frac{r^{2\delta-4}t}{N^{4\delta-1}}+2\delta x_{i}^2\frac{r^{4\delta-4}}{N^{4\delta-1}}-\frac{4\delta-1}{N}X_{i+d}NX_{i+d}N
\]
Then, using that $Q=2d+2\delta$ and \eqref{grad}, we compute
\[
\Delta_\X f(N)=f'(N)\Delta_\X N+f''(N)|D_\X N|^2=|D_\X N|^2\left(f''(N)+\frac{Q-1}{N}f'(N)\right)\ .
\]
%
%
%

\end{proof}
\subsection{Some Liouville theorems for linear and quasi-linear problems}
\label{sec;HGlql}
Let $\X$ be the H-G 
 vector fields. We first 
 observe that the Liouville property for subsolutions (supersolutions) bounded from above (below) to $-\Delta_\X u=0$ in $\R^d$ fails also in this setting. Indeed, by means of 
 Lemma \ref{hesGre} one checks 
  that the function $u(x)=-(1+\rho^2)^{-\frac{Q-2}{2}}$ is a non-trivial bounded classical subsolution to the corresponding sub-Laplace equation in the whole space. The same function provides a counterexample for the validity of the Liouville property for sub-solutions bounded from above to $\mathcal{M}^-_{\lambda,\Lambda}((D^2_\X u)^*)=0$ in $\R^d$ via the inequality $\mathcal{M}^-_{\lambda,\Lambda}((D^2_\X u)^*)\leq -\Lambda \Delta_\X u$. 

Consider now
\begin{equation}
\label{sub1}
-\Delta_\X u+H_i(x,u,D_\X u)=0\text{ in }\R^{2d+1}
\end{equation}
and
\begin{equation}\label{sub2}
-\Delta_\X u+H_s(x,u,D_\X u)=0\text{ in }\R^{2d+1}
\end{equation}
where $\Delta_\X$ is the H-G 
 sub-Laplacian and the vector fields $b^\alpha(x)$ in $H_i$ and $H_s$ take values in $\R^{2d}$. 
 By the usual proof based on Theorem  \ref{main} and now combined with Lemma \ref{hesGre} we get the following.
\begin{thm}
\label{Cor1G}
Let $\mathcal{X}=\{X_1,....,X_{2d}\}$ be the H-G 
vector fields. Assume \eqref{b}-\eqref{c} 
and the condition
\begin{equation}\label{condcor1}
\sup_{\alpha\in A}\left\{ b^\alpha(x)\cdot \frac{\eta}{|x|^{2(2\delta-1)}}-\frac{c^\alpha(x)N^{4\delta}\log N}{|x|^{2(2\delta-1)}}\right\}\leq -(Q-2)
\end{equation}
for $N$ sufficiently large, $|x|\neq0$, where $Q=2d+2\delta$
, 
 and $\eta\in\R^{2d}$ 
is defined in Lemma \ref{hesGre}.
\begin{itemize}
\item[(A)] If either $c^\alpha(x)\equiv0$ or $u\geq0$, then \eqref{LP1} holds for \eqref{sub1} with $w(x)=\log N(x)$.
\item[(B)] If either $c^\alpha(x)\equiv0$ or $v\leq0$, then \eqref{LP2} holds for \eqref{sub2} with $W=-\log N(x)$.
\end{itemize}
\end{thm}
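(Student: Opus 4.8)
The plan is to follow verbatim the strategy of Theorems \ref{Cor1H}, \ref{corfree} and \ref{Gruquasi}: since the equations \eqref{sub1}--\eqref{sub2} are exactly of the form \eqref{P-}--\eqref{P+} with $\lambda=\Lambda=1$ (recall $\mathcal{M}^\pm_{1,1}((D^2_\X u)^*)=-\Delta_\X u$ by Remark \ref{nondiv}), it suffices to produce a Lyapunov function $w$ for \eqref{sub1} (respectively a negative Lyapunov function $W$ for \eqref{sub2}) and then invoke Theorem \ref{main}. I would take $w(x)=\log N(x)$, which is smooth on $\R^{2d+1}\setminus\{0\}$ and satisfies $\lim_{|x|\to\infty}w(x)=+\infty$ because $N\to\infty$. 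The entire content of the proof is therefore to verify that $w$ is a classical, hence viscosity, supersolution of \eqref{sub1} outside a compact neighbourhood of the origin.

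First I would specialize Lemma \ref{hesGre} to $f=\log$: with $f'(N)=1/N$ and $f''(N)=-1/N^2$ one gets $\Delta_\X(\log N)=(Q-2)|D_\X N|^2/N^2$, so $-\Delta_\X w=-(Q-2)|D_\X N|^2/N^2$. Using $D_\X w=D_\X N/N=\eta/N^{4\delta}$ together with the identity $|D_\X N|^2/N^2=|x|^{2(2\delta-1)}/N^{4\delta}$ coming from \eqref{grad}, the supersolution requirement $-\Delta_\X w+H_i(x,w,D_\X w)\ge 0$ becomes
\[
-(Q-2)\frac{|x|^{2(2\delta-1)}}{N^{4\delta}}+\inf_{\alpha\in A}\Big\{c^\alpha(x)\log N-b^\alpha(x)\cdot\frac{\eta}{N^{4\delta}}\Big\}\ge 0 .
\]
At every point with $|x|\ne 0$ I would multiply by the positive factor $N^{4\delta}/|x|^{2(2\delta-1)}$ and rewrite the resulting inequality as $\sup_{\alpha}\{\dots\}\le -(Q-2)$, which is precisely the hypothesis \eqref{condcor1}; hence $w$ is a supersolution at all points where $|x|\ne 0$ and $N$ is large.

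The one point that needs genuine care --- and where the H-G geometry differs from the Carnot cases treated before --- is the characteristic set $\{x=0\}$, which is \emph{non-compact} (it is the whole $t$-axis) and where the operator is maximally degenerate, so that \eqref{condcor1} is not even imposed there. On this set $r=|x|=0$ forces $\eta=0$ and $|D_\X N|^2=|x|^{2(2\delta-1)}/N^{2(2\delta-1)}=0$ by Lemma \ref{hesGre}, so both the second-order and the first-order contributions to the displayed inequality vanish identically and only the zero-order term $\inf_\alpha c^\alpha(x)\log N$ survives; this is nonnegative for $N\ge 1$ since $c^\alpha\ge 0$ by \eqref{c}. Thus the supersolution inequality holds automatically along $\{x=0\}$, and $w$ is a bona fide Lyapunov function outside a large gauge ball. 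Part (A) then follows from Theorem \ref{main}(a). For part (B) I would run the symmetric argument, checking that $W(x)=-\log N(x)$ is a negative Lyapunov function, i.e.\ a subsolution of \eqref{sub2}, under the same condition \eqref{condcor1}, and conclude by Theorem \ref{main}(b). I expect the degeneracy on the non-compact characteristic locus to be the only genuinely delicate step; the remaining computations are routine applications of Lemma \ref{hesGre}.
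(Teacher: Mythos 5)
Your proposal is correct and follows exactly the route the paper intends: the paper omits the proof of Theorem \ref{Cor1G}, stating only that it is ``the usual proof based on Theorem \ref{main} combined with Lemma \ref{hesGre}'', and your computation with $w=\log N$ reproduces precisely that argument, with the multiplication by $N^{4\delta}/|x|^{2(2\delta-1)}$ turning the supersolution inequality into \eqref{condcor1}. Your explicit verification on the non-compact characteristic set $\{x=0\}$ (where $\eta=0$ and $|D_\X N|=0$, so only the nonnegative term $\inf_\alpha c^\alpha\log N$ survives) is a point the paper leaves implicit, and it is a correct and worthwhile addition.
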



\section{Optimality of the conditions}
\label{sec_fin}
\begin{rem}
The sufficient condition 
{found in \cite[Remark 4.5]{BG_lio1}} for the Liouville property of the equation
\begin{equation}\label{M+b}
\mathcal{M}^+_{\lambda,\Lambda}(D^2_{\X}u)-b(x)\cdot D_\X u=0\text{ in }\R^{2d+1} \,,
\end{equation}
where $\X$ are the Heisenberg vector fields, is optimal. { Such assumptions can be rewritten, with the help of \cite[Lemma 4.1]{BG_lio1}, as}
\begin{equation}
\label{ass_heis}
\limsup_{\rho(x)\to\infty}\rho\ b(x)\cdot D_\X \rho< \lambda(2-\beta)|D_\X \rho|^2\ ,
\end{equation}
where $\beta=\frac{\Lambda}{\lambda}(Q-1)+1$ is the intrinsic dimension of the Pucci-Heisenberg maximal operator and $\rho(x)$ is the homogeneous norm.
For $\delta>0$ to be later determined we take, {as in \cite{BG_lio1,Goffi}}, the radial function $u(x)=f(\rho(x))=(1+\rho^2)^{-\frac{\delta}{2}}$. In view of \cite[Lemma 4.1]{BG_lio1}, we compute
\[
f'(\rho)=-\delta(1+\rho^2)^{-\frac{\delta}{2}-1}\rho\,, \quad
f''(\rho)=\delta(1+\rho^2)^{-\frac{\delta}{2}-2}\left[(\delta+1)\rho^2-1\right]\ ,
\]
so that the eigenvalues of $(D^2_\X f)^*$ are $f''(\rho)|D_\X \rho|^2$, $3\frac{f'(\rho)}{\rho}|D_\X \rho|^2$, which are simple, and $\frac{f'(\rho)}{\rho}|D_\X \rho|^2$ with multiplicity $Q-2$. 
Therefore, when $\rho^2\leq 1/(\delta+1)$ we have, using that $\Lambda(Q-1)=\lambda(\beta-1)$,
\begin{multline*}
\mathcal{M}^+_{\lambda,\Lambda}(D^2_{\X}u)=|D_\X \rho|^2\delta(1+\rho^2)^{-\frac{\delta}{2}-1}\left[\Lambda \frac{1-(\delta+1)\rho^2}{1+\rho^2}+\Lambda(Q-1)\right]\\\geq\lambda |D_\X \rho|^2\delta(1+\rho^2)^{-\frac{\delta}{2}-1} \left[\frac{1-(\delta+1)\rho^2}{1+\rho^2}+\beta-1\right]\ .
\end{multline*}
Similarly, when $\rho^2\geq 1/(\delta+1)$ it follows that
\begin{multline*}
\mathcal{M}^+_{\lambda,\Lambda}(D^2_{\X}u)=\lambda|D_\X \rho|^2\delta(1+\rho^2)^{-\frac{\delta}{2}-1}\left[ \frac{1-(\delta+1)\rho^2}{1+\rho^2}\right]+\Lambda(Q-1)\delta(1+\rho^2)^{-\frac{\delta}{2}-1}|D_\X \rho|^2\\
=\lambda |D_\X \rho|^2\delta(1+\rho^2)^{-\frac{\delta}{2}-1} \left[\frac{1-(\delta+1)\rho^2}{1+\rho^2}+\beta-1\right]\ .
\end{multline*}
In both cases we end up with
\[
\mathcal{M}^+_{\lambda,\Lambda}(D^2_{\X}u)\geq \lambda |D_\X \rho|^2\delta(1+\rho^2)^{-\frac{\delta}{2}-1} \left[\frac{1-(\delta+1)\rho^2}{1+\rho^2}+\beta-1\right]\ .
\]
We then take the vector field $b:\R^{2d+1}\to\R^{2d}$, $b(x)=\lambda(2-\beta+\delta)\frac{\rho}{1+\rho^2}D_\X \rho$ to find that
\[
\mathcal{M}^+_{\lambda,\Lambda}(D^2_{\X}u)-b(x)\cdot D_\X u\geq \beta\lambda\delta{ |D_\X \rho|^2}(1+\rho^2)^{-\frac{\delta}{2}-2}
\]
so that 
 $u$ is a bounded non-constant supersolution to the equation. 
On the other hand 
\[
\lim_{\rho\to\infty}\rho\ b(x)\cdot D_\X \rho = 
\lambda(2-\beta+\delta)|D_\X\rho|^2
\]
and $\delta$ can be taken arbitrarily small, so the condition \eqref{ass_heis} is sharp.

The same counterexample works with minor modifications for the H-type group considered in Section \ref{sec_fund}, replacing $Q$ with the appropriate homogeneous dimension. 

Moreover, by setting $\lambda=\Lambda=1$, the same radial function works as a counterexample for the linear equation
\[
-\Delta_\X u-b(x)\cdot D_\X u=0\text{ in }\R^d\ ,
\]
provided that the corresponding sub-Laplacian admits a fundamental solution. This is true on any Carnot group with step 2, on Grushin geometries, and for problems structured on Heisenberg-Greiner vector fields. The Euclidean counterpart of such counterexamples can be found in \cite{CirantGoffi}. This also shows that the condition ensuring the Liouville comparison principle in Corollary \ref{lcpl} is sharp. 

This agrees with the analysis on Riemannian manifolds carried out in \cite{MV} (see also the references therein), where the Liouville property has been proved to be equivalent to the existence of a Lyapunov function (named Khas'minskii test) for equations driven by $p$-Laplacians perturbed by zero-th order terms. Our results show analogous properties for some fully nonlinear PDEs on special sub-Riemannian geometries of Heisenberg type, e.g.,  for the equations driven by Pucci's extremal operators over the horizontal Hessian 
 perturbed with drift terms. 
 We do not study here the connection of the Liouville property for uniformly elliptic or 
 degenerate Bellman-Isaacs equations with the probabilistic properties of the corresponding controlled diffusion 
  processes,  that in the case of linear equations and uncontrolled processes was deeply studied, e.g., in \cite{Gry1}.
\end{rem}
\begin{rem}
Regarding equation \eqref{M+b}, our sufficient conditions give new Liouville properties even for problems perturbed with first order terms having natural gradient growth. For instance, the Liouville property for equation \eqref{M+b} leads to a nonexistence result for supersolutions bounded below of an equation like
\[
\mathcal{M}^+_{\lambda,\Lambda}(D^2_{\X}u)\pm |D_\X u|^2-b(x)\cdot D_\X u=0\text{ in }\R^{2d+1}\ .
\]
When the eikonal term has a minus sign, a supersolution to the above equation is also a supersolution to \eqref{M+b}. In the case of the presence of a positive quadratic term, one can use a nonlinear variant of the Hopf-Cole transformation as $v(x)=\lambda(1-e^{-\frac{u}{\lambda}})$ to reduce the proof of the Liouville property to an equation in the variable $v$ with only a drift term of Ornstein-Uhlenbeck type. One can even consider a similar problem where the nonlinearity involves the product $u^q|Du|^2$, $q\geq0$, where still the first-order superlinear term has a quadratic growth. 
We refer to \cite{CirantGoffi} for further details and open problems involving such nonlinearities.
\end{rem}
%
{\section{Strict Lyapunov functions and ergodicity}
\label{sec;erg}
In this section we briefly discuss how the Liouville properties for linear degenerate elliptic operators allow to address the question of the ergodicity of stochastic processes with degenerate diffusion in the whole space $\R^d$ with possibly unbounded coefficients, together with the large time behavior for parabolic problems posed on the whole space. Although the existence of a Lyapunov/exhaustion function is in general sufficient to derive the Liouville property for linear elliptic equations with drift terms, it does not guarantee the existence of an invariant probability measure for the underlying diffusion process, cf \cite[Remark (iii), p. 9]{LionsMusiela}, see also \cite{AB} for the relevant definitions. 

{When the diffusion term is strictly elliptic, it is well-known that a sufficient condition for the existence of an invariant measure for a process with infinitesimal generator $\mathcal{L}$ is the existence of a \textit{strict Lyapunov function}, namely the existence of a function $\overline{w}\in C^\infty$ such that 
\[
\text{$\overline{w}\to+\infty$ as $|x|\to\infty$ and $\mathcal{L}\overline{w}\geq1$ for $|x|\geq R_0>0$}.
\]
We consider the example of the Heisenberg group in $\R^{2d+1}$ treated in \cite{BG_lio1}, but general results on different structures can be obtained in the same manner. We denote for $x_H:=(x,y)\in\R^{2d}$ by
\begin{equation}
\label{sigma}
\sigma=
\begin{pmatrix}
I_d & 0_d\\
0_d & I_d\\
2y & -2x
\end{pmatrix}
\end{equation}
and consider the operator (written in Euclidean coordinates) 
$$\mathcal{L}u:=-\mathrm{Tr}(\sigma\sigma^TD^2u)-b(x)\cdot Du.$$
 Since the matrix $A=\sigma\sigma^T$ is degenerate, to use the previous sufficient condition valid for a strictly elliptic operator it is enough to regularize the structure by considering the approximated operator $\mathcal{L}_\eps u=-\mathrm{Tr}(A_\eps(x)D^2u)-b(x)\cdot Du$, where
 \[
A_\eps(x)=\sigma\sigma^T+
\begin{pmatrix}
0_d & 0_d & 0\\
0_d & 0_d & 0\\
0 & 0& \eps^2
\end{pmatrix}=
\begin{pmatrix}
I_d & 0_d & 2y\\
0_d & I_d & -2x\\
2y & -2x& 4|x_H|^2+\eps^2
\end{pmatrix},
\]
 construct an appropriate Lyapunov function, and finally pass to the limit. To this aim, we show that a slight modification of Corollary 4.19 in \cite{BG_lio1}, or of Corollary \ref{cor_euc1} for H-type groups in the present paper, leads to the existence of a strict Lyapunov function for { $\mathcal{L}_\eps$, uniform in $\eps$.}
 %
\begin{lemma}
\label{strictlyap}
Consider $\mathcal{L}_\eps u=-\mathrm{Tr}(A_\eps(x)D^2u)-b(x)\cdot Du$ and suppose there exist $\gamma_1,...,\gamma_{2d+1}\in\R$ such that $\min \gamma_i=\gamma_0>0$ satisfying
\begin{equation}\label{erg}
b(x)\cdot D\rho\leq -\sum_{i=1}^{2d+1}\gamma_ix_i\partial_i\rho+o\left(\frac{1}{\rho^3}\right)\text{ as }\rho\to\infty.
\end{equation}
Then there { exist $\overline{w}$ and $R_0$ independent of $\eps\in[0,1]$ such that $\overline{w}$ is a strict Lyapunov function for 
 $\mathcal{L}_\eps$.} 
\end{lemma}
\begin{proof}
We take $\overline{w}=\nu\log\rho$, where $\rho=(|x_H|^4+x_{2d+1}^2)^\frac14$ and $\nu>0$ to be later determined. Then
\[
\mathcal{L}_\eps\overline{w}=-{ 2d}
\nu\frac{|x_H|^2}{\rho^4}-\frac{\nu\eps^2}{2\rho^4}+{\nu}{ \frac{\eps^2x_{2d+1}^2}{\rho^8}}-\nu b(x)\cdot \frac{D\rho}{\rho}\ .
\]
Since $D\rho=(2|x_H|^2x_H,x_{2d+1})/(2\rho^3)$, we get from \eqref{erg} that 
\begin{multline*}
\mathcal{L}_\eps\overline{w}\geq -{ 2d}\nu
\frac{|x_H|^2}{\rho^4}-\frac{\nu\eps^2}{2\rho^4}+\frac{\nu}{2\rho^4}\left(2\sum_{i=1}^{2d}\gamma_ix_i^2|x_H|^2+\gamma_{2d+1}x_{2d+1}^2+o(1)\right)\\
\geq \nu\frac{\gamma_0}{2}+\frac{\nu}{\rho^4}\left[|x_H|^2(\frac{\gamma_0}{2}|x_H|^2-{ 2d})
-\frac{\eps^2}{2}+o(1)\right].
\end{multline*}
Now we choose $\nu= 4/\gamma_0$ and observe that $|x_H|^2(\frac{\gamma_0}{2}|x_H|^2-{ 2d})$ is bounded from below. Then we can choose $R$ independent of $\eps\in[0,1]$ such that
$
\mathcal{L}_\eps\overline{w}\geq1\text{ for }\rho\geq R ,
$ 
which easily gives the claim. 
\end{proof}
{Thanks to the previous Lemma one can follow the approach initiated in \cite{LionsCourse} and outlined in 
 \cite[Prop. 2.1]{MMT} to obtain in a rather straightforward way the following result.
\begin{thm}
\label{thmerg}
Assume $\sigma$ is given by \eqref{sigma} and $b$ is Lipschitz and satisfies \eqref{erg}. Then
 there exists a unique invariant probability measure $m$ 
 for the diffusion process generated by the operator $\mathcal{L}$. 
\end{thm}
\begin{rem}
\label{exten}
It is not difficult to adapt the proof of Lemma \ref{strictlyap} 
to prove the existence of a strict Lyapunov function when the operator is modeled over the fields generating any Carnot group of step 2, such as a H-type group, or Grushin-type geometries. Therefore also Theorem \ref{thmerg} can be extended to these settings.
\end{rem}
\begin{rem}
\label{largetime}
The invariant measure $m$ found in Theorem \ref{thmerg}, together with the Liouville property and H\"older estimates for solutions of subelliptic equations, can be used to prove asymptotic properties of PDEs associated to the operator $\mathcal{L}$. A first example is the small discount limit for the stationary equation, for $\delta>0$,
\[
\delta u_\delta + \mathcal{L}u = f(x) ,\quad \text{in } \R^{2d+1} ,
\]
with $f$ continuous and bounded.
Under the assumptions of Theorem \ref{thmerg} or Remark \ref{exten} one can prove
\[
\lim_{\delta\to 0}u_\delta(x)=\int_{\R^{2d+1}}f(x)\,dm(x) ,
\]
uniformly on compact sets, see \cite[Theorem 4.3]{BCManca} or  \cite[Theorem 4.1]{MMT}.

A second example is 
 the large-time behavior of the  degenerate parabolic equation
\[
\begin{cases}
\partial_t u+\mathcal{L}u=0&\text{ in }\R^{2d+1}\times(0,\infty),\\
u(x,0)=f(x)&\text{ in }\R^{2d+1} . 
\end{cases}
\]
Then, under the same assumptions, one can prove
\[
\lim_{t\to+\infty}u(x,t)=\int_{\R^{2d+1}}f(x)\,dm (x),
\]
uniformly on compact sets, see \cite[Prop. 4.4]{BCManca} or  \cite[Theorem 4.2]{MMT}.
\end{rem}
}

\end{document}